\let\saved@setaddresses\@setaddresses %
\let\@setaddresses\relax              %
\definecolor{darkred}{rgb}{0.5,0,0} %
\definecolor{darkblue}{rgb}{0,0,0.5} %
\DeclareMathAlphabet{\lcal}{U}{dutchcal}{m}{n}
\newcommand{\df}[1]{{\it{#1}}{\index{#1}}} %
\newtheorem{theorem}            {Theorem}[section]
\newtheorem{corollary}          [theorem]{Corollary}
\newtheorem{proposition}        [theorem]{Proposition}
\newtheorem{lemma}              [theorem]{Lemma}
\newtheorem{example}            [theorem]{Example}
\newtheorem{remark}             [theorem]{Remark}
\newcommand{\cB}{B}
\newcommand{\cH}{\mathcal H}
\newcommand{\cK}{\mathcal{K}}
\newcommand{\cL}{\mathcal{L}}
\newcommand{\cM}{\mathcal{M}}
\newcommand{\cZ}{\mathcal{Z}}
\newcommand{\CC}{\mathbb{C}}
\newcommand{\NN}{\mathbb{N}}
\newcommand{\RR}{\mathbb{R}}
\newcommand{\ZZ}{\mathbb{Z}}
\newcommand{\FF}{\mathbb{F}}
 \newcommand{\sAwM}{\mathscr{A}_{\Ww,M}}
 \newcommand{\sB}{\mathscr{B}}
 \newcommand{\fC}{\mathfrak{C}}
 \newcommand{\fD}{\mathfrak{D}}
 \newcommand{\vg}{{\tt{g}}}
 \newcommand{\real}{\operatorname{real}}
\newcommand{\blangle}{\big \langle}
\newcommand{\brangle}{\big \rangle}
\newcommand{\HE}{\mathcal{E}}
\newcommand{\HF}{\mathcal F}
\newcommand{\he}{\lcal e}
\newcommand{\hf}{\lcal f}
\newcommand{\groupW}{\mathscr W}
\newcommand{\groupWw}{\groupW_{\le \Ww}}
\newcommand{\groupWx}[1]{\groupW_{\le {#1}}}
\newcommand{\groupY}{\mathfrak{Y}}
\newcommand{\vh}{{\tt{h}}}
\newcommand{\llx}{\langle x\rangle}
\newcommand{\llxinv}{\langle x^{-1}\rangle}
\newcommand{\lly}{\mathcal Y}%
\newcommand{\semiyN}[1]{\lly_{\le {#1}}} %
\DeclareMathOperator{\Her}{\ell\text{-}Frac}
\newcommand{\pE}{E}
\newcommand{\pF}{F}
\newcommand{\Ztg}{\mathbb{Z}\strut^{*\vg}_2}
\newcommand{\Zth}{\mathbb{Z}\strut^{*\vh}_2}
\newcommand{\KL}{L}
\newcommand{\msT}[1]{\mathscr{T}_{\mathsmaller{{#1}}}}
\newcommand{\msTK}[1]{\mathscr{T}_{\mathsmaller{{#1},K}}}
\newcommand{\msLK}[2]{\mathscr{L}_{\mathsmaller{{#1},{#2},K}}}
\newcommand{\wfC}[1]{\widehat{\mathfrak{C}}_{#1}}
\newcommand{\PXee}{P}
\newcommand{\freeG}{\FF}
\newcommand{\freesemi}{\llx}
\newcommand{\ztx}{x} %
\newcommand{\Wu}{\mathscr{u}} %
\newcommand{\Wv}{\mathscr{v}} %
\newcommand{\Ww}{\mathscr{w}} %
\newcommand{\Wz}{\mathscr{z}} %
\newcommand{\Ws}{\mathscr{s}} %
\newcommand{\Wt}{\mathscr{t}} %
\newcommand{\Wb}{\mathscr{b}} %
\newcommand{\Wa}{\mathscr{a}} %
\newcommand{\lWu}{\mathscr{u}} %
\newcommand{\Ya}{\lcal{a}} %
\newcommand{\Yb}{\lcal{b}} %
\newcommand{\gYu}{\mathfrak{u}} %
\newcommand{\WGu}{u}  %
\newcommand{\WGv}{v}  %
\newcommand{\YGg}{g} %
\newcommand{\YGh}{h} %
\newcommand{\WYu}{u} %
\newcommand{\WYv}{v} %
\newcommand{\lralpha}{\alpha}%
\newcommand{\lrbeta}{\beta} %
\newcommand{\lru}{\Wu} %
\newcommand{\lrv}{\Wv}  %
\newcommand{\lrmu}{\mu} %
\newcommand{\lrnu}{\nu} %
\newcommand{\lrh}{h}  %
\newcommand{\msB}{\mathscr{B}} %
\newcommand{\mbo}{1}
\title[Noncommutative Fejér--Riesz Factorization]{Fejér--Riesz factorization for positive noncommutative trigonometric polynomials}
\author[I.\ Klep]{Igor Klep${}^{1,Q}$}
\address{Igor Klep, Faculty of Mathematics and Physics, University of Ljubljana 
\& Famnit, University of Primorska, Koper 
\& Institute of Mathematics, Physics and Mechanics,
Ljubljana, Slovenia}
\email{igor.klep@fmf.uni-lj.si}
\thanks{${}^1$Supported by the Slovenian Research Agency 
	program P1-0222 and grants J1-50002,
	N1-0217,
J1-60011, J1-50001, J1-3004 and J1-60025. Partially supported by the Fondation de l’Ecole polytechnique as part
of the Gaspard Monge Visiting Professor Program. IK thanks
Ecole Polytechnique and Inria for
hospitality during the preparation of this manuscript.}
\author[J. Levenson]{Jacob Levenson}
\address{Jacob Levenson, Department of Mathematics\\
	University of Florida\\ Gainesville } %
\email{levenson.j@ufl.edu}
\author[S. McCullough]{Scott McCullough}
\address{Scott McCullough, Department of Mathematics\\
	University of Florida\\ Gainesville} %
\email{sam@math.ufl.edu}
\thanks{}
\thanks{${}^Q$This work was performed within the project COMPUTE, funded within the QuantERA II 
Programme that has received funding from the EU's H2020 research and innovation programme under the GA No 101017733 {\normalsize\euflag}}
\subjclass[2020]{Primary: 47A68, 46L07, 43A35; Secondary: 13J30, 47A56, 47B35}
\keywords{Fejér--Riesz theorem, Parrott theorem, matrix completion, sums of squares, noncommutative polynomial, trigonometric polynomial, group algebra, free product, Positivstellensatz}
\numberwithin{equation}{section}
\begin{document}
\baselineskip=18pt

\begin{abstract}
We prove a Fej\'er--Riesz type factorization for positive matrix-valued noncommutative trigonometric polynomials on $\groupW\times\groupY$, where $\groupW$ is either the free
semigroup $\llx_\vg$ or the free product group $\Ztg$, and $\groupY$ is a  discrete group. 
More precisely, using the shortlex order, 
if $A$ has degree at most $\Ww$ in the $\groupW$ variables and is strictly positive on all unitary representations of $\groupW\times\groupY$, then $A=B^{*}B$ with $B$ analytic and of $\groupW$-degree at most $\Ww$; this degree bound is optimal, and strict positivity is essential. As an application, 
we obtain degree-bounded sums-of-squares certificates for Bell-type inequalities in $\CC[\Ztg\times \Zth]$ from quantum information theory.

In the special case $\groupY=\ZZ^{\vh}$ we recover, in the matrix-valued setting, the classical commutative multivariable Fej\'er--Riesz factorization. For trivial $\groupY$ we obtain a ``perfect'' group-algebra Positivstellensatz on $\Ztg$ that does not require strict positivity; 
this result is sharp in the sense that no such perfect degree bound can hold on $\ZZ_2*\ZZ_3$ and $\ZZ_3^{*2}$
as demonstrated by counterexamples.

To establish our main results two novel ingredients of independent interest are developed: (a) a positive semidefinite Parrott theorem with entries given by functions on a group; and (b) solutions to positive semidefinite matrix completion problems for $\llx_\vg$ or  $\Ztg$ indexed by words %
of length $\le \Ww$.
\end{abstract}

\maketitle

\section{Introduction}
The classical Fejér--Riesz theorem asserts that a univariate trigonometric polynomial that is positive on the unit circle factors as the modulus square of an analytic polynomial. Due to its importance across many disciplines (e.g., minimum-phase filter design, prediction theory, and the analysis of Toeplitz operators and moment problems) it has been generalized in many directions: to matrix- and operator-valued polynomials \cite{Ro68,DR10}, and to multivariate settings \cite{GW05,Dritschel,DW05}. The Fejér--Riesz theorem is also a particular instance of a Positivstellensatz \cite{smu91,Pu93}, a pillar of real algebraic geometry \cite{BCR98,Ma08,Sc24}. 

During this century, 
motivated by linear systems theory \cite{SIG98,dOHMP09}, quantum physics \cite{brunner,KMP22} and free probability \cite{MS17,Gu16}, 
a noncommutative version of function theory \cite{KVV14,MuSo11,AM15,BMV16,PTD22}, also known as free analysis \cite{Vo10}, is under development. 
This program includes noncommutative Fejér--Riesz–type factorizations and nc Positivstellens\"atze. See for instance \cite{Mc01,He02,HM04,HMP04,Po95,JM12,JMS21} and the references cited therein.
We also note inherent limitations on algorithmic detection of noncommutative positivity: in certain tensor-product settings positivity is undecidable \cite{MSZ}.

In this paper
we prove a Fejér–Riesz factorization for positive, matrix-valued \textit{noncommutative trigonometric} polynomials 
$A$ 
on a mixed domain $\groupW\times \groupY$, where $\groupW$ is either a free semigroup or a free product of copies of $\ZZ_2$ and $\groupY$ is a
discrete group.
Along the way we develop two tools of independent interest—a positive semidefinite (psd) Parrott theorem with entries given by functions on a group, and degree-controlled psd matrix completions \cite{GJSW, BW, CJRW, GW05, GKW, sparsity, 2-chordal} indexed by words. 
To explain our contribution in more detail, we require some notation.

\subsection{Notation}
  Fix a positive integer $\vg$ and let $\freeG_\vg$\index{$\mathbb F_{{\tt{g}}}$}\index{free group} denote the free group on the alphabet
  $x=\{x_1,\dots,x_\vg\}.$ The group $\freeG_\vg$  contains the free semigroup $\llx_\vg=\langle x_1,\dots,x_\vg\rangle$\index{free semigroup}\index{$\langle x_1,\dots,x_{\tt{g}}\rangle$}\index{$\langle x\rangle_{\tt g}$}
  generated by $x$ as well as the free semigroup $\llxinv_\vg$ generated by $x^{-1}=\{x_1^{-1},\dots,x_\vg^{-1}\}$.
  The set of \df{left fractions} \index{$\ell$-Frac}
\[
 \Her \llx_\vg  = \{\Wu^{-1} \Wv: \Wu,\Wv \in \freesemi_\vg\}
\subset\freeG_\vg.
\]
 plays an important role. 
  
  Let $\Ztg$\index{$\mathbb Z_2^{*{\tt g}}$} denote the free product 
  of $\ZZ_2$  with itself $\vg$ times. Thus $\Ztg$ is the group
  with generators $\ztx=\{\ztx_1,\dots,\ztx_\vg\}$ and relations $\ztx_j^2=e,$
  where $e$ is the identity of $\Ztg.$  Note 
  that $\ztx_j^{-1}=\ztx_j$ for  $1 \leq j \leq \vg$.

  Let $\groupY$\index{$\mathfrak Y$} denote a group
  with $\vh$ generators $y=\{y_1,\dots,y_\vh\}$ with the property that
\begin{equation}\label{eq:groupY}
 \groupY = \Her\lly =\{\Ya^{-1}\Yb : \Ya,\Yb \in \lly\},
\end{equation}
 where $\lly$\index{$\mathcal Y$} denotes the semigroup generated by $y$.
 For instance, the additive 
 group $\ZZ^\vh$ and $\Zth$ both have this property,
 in the first
 case with {each}  generator $y_j\in \ZZ^{\vh}$ being the element with 
 a $1$ in the $j$-th position and a $0$ elsewhere and in the second
 case {with each generator $y_j \in \Zth $
 being one of the usual generators of $\Zth$ 
 (meaning $y_j$ doesn't commute with $y_i$ for $i \neq j$ and $y_j^2 =e$).} 
 
 Let $\groupW$\index{$\mathscr W$} denote either $\Ztg$ or $\llx_\vg.$ 
 Elements of the direct product $\Her \groupW \times \groupY$ have the form 
  $\Wu \, \Ya$ for $\Wu \in \Her \groupW$ and $\Ya \in \groupY.$
  Note that $\Ztg$ and $\groupY$ are naturally subgroups of {$\Ztg \times \groupY$}.
  Likewise $\groupY$ is naturally a subgroup of $\Her \llx_\vg\times \groupY;$ whereas
  $\Her \llx_\vg$ is naturally left $\langle x^{-1}\rangle_\vg$-invariant and right $\llx_\vg$-invariant.

  A \df{unitary representation} $\pi$ of $\Ztg \times \groupY$  on a Hilbert space  $\HF$ is given by 
 a unitary representation of $\rho$ of $\groupY$ on $\HF$ and  a tuple of $(U_1,\dots,U_\vg)$
  of unitary operators on $\HF$ such that   $U_j^2=I_{\HF}$ and  $U_j$ commutes with $\rho$ for each $j$
  (meaning $U_j \rho(\gYu) =\rho(\gYu)U_j$ for all $\gYu\in\groupY$), with 
\begin{equation}
\label{e:piUrho0}
 \pi(\ztx_{i_1}\cdots \ztx_{i_k} \, \gYu) = U_{i_1} \cdots U_{i_k} \, \rho(\gYu).
\end{equation}

   For the purposes here, a unitary representation $\pi$ of 
$\llx_\vg \times \groupY$ or    
   $\Her\llx_\vg \times \groupY$ on 
 a Hilbert space $\HF$ 
 is a unitary representation of $\freeG_\vg\times\groupY$ on $\HF$; that is, 
 $\pi$  is given by a unitary representation $\rho$ of $\groupY$ on $\HF$ and 
 a tuple of unitaries $U=(U_1,\dots,U_\vg)$ acting on $\HF$ that commute with $\rho$
 via
\begin{equation}
\label{e:piUrho}
 \pi(\Wu^{-1}\Wv \, \gYu) = (U^\Wv)^*\,  U^\Wu  \rho(\gYu),
\end{equation}
 where $U^\Wv$ is defined in the canonical fashion.
If we are considering $\llx_\vg \times \groupY$, then $\Wu^{-1}$ in \eqref{e:piUrho} is the empty word.

We order
  the reduced words in $\groupW$ (either $\llx_\vg$ or $\Ztg$) by
  the \df{shortlex order} (length and then dictionary).
The shortlex order is a well-ordering; it is a
 total ordering on $\groupW$ with the property that every non-empty subset of $\groupW$ has a least element.
  Given a word $\Ww\in\groupW$,  let $\groupWw=\{\Wv\in\groupW: \Wv \le \Ww\}$ and 
\[
\Her \groupWw = \{\Wu^{-1}\Wv : \Wu,\Wv\in \groupWw\}.
\]
  For the group $\groupY$ as in \eqref{eq:groupY}, let  $\semiyN{M}$ denote  those words in $y$ of
  length at most $M.$

Given $\Ww\in\groupW,$  positive integers $K$ and $M$ and
$A_g\in M_K(\CC)$ for each $g\in \Her \groupWw\times \Her \semiyN{M},$ 
the expression,
\begin{equation}
  \label{eq:trigPoly}
  A = \sum\{ A_g\,  g :  g \in \Her \groupWw\times\Her \semiyN{M} \} %
\end{equation}
is an analog of an $M_K(\CC)$-valued \df{trigonometric polynomial},
or simply \df{polynomial} for short, whose \df{bidegree} is at most $(\Ww,M).$
Similarly,
now with also $K^\prime,M^\prime$ as positive integers, 
given $B_{g}\in M_{K^\prime,K}$ for $g \in \groupWw \times \semiyN{M^\prime},$ the expression
\begin{equation}\label{eq:analPoly}
  B=\sum \{ B_g \, g : g  \in \groupWw \times \semiyN{M'} \}
\end{equation}
is the analog of an \df{analytic polynomial}. In this case,  the \df{bidegree} of $B$ is at most  $(\Ww,M^\prime).$
 For $B$ as in equation~\eqref{eq:analPoly}, let
\[
 B^* =  \sum_{\lrh \in \groupWw\times \semiyN{M'}} B_\lrh^*  \lrh^{-1}
\]
 and interpret $B^*B$ in the canonical way.  In particular, $B^*B$ is a
 $M_K(\CC)$-valued trigonometric polynomial of bidegree  at most $(\Ww,M^\prime).$

Given a unitary representation $\pi$ of %
$\Her\groupW\times \groupY$,
the
{\it value} of $A$ 
as in 
\eqref{eq:trigPoly}
at $\pi$ is
\begin{equation}\label{eq:evalPoly}
 A(\pi) = \sum\{ A_g \otimes \pi(g) :  g \in \Her \groupWw\times\Her\semiyN{M} \}. %
\end{equation}
 The evaluation $B(\pi)$ of an analytic polynomial $B$ at $\pi$ is defined similarly. 
 In particular,
\[
 B^*B(\pi) = B(\pi)^* B(\pi).
\]

\subsection{Main results}
 In this section we state our main results and provide some context. 
 An operator $T$ on a complex Hilbert space $\mathcal{G}$ is \df{positive definite}, abbreviated
 \df{pd} and denoted  \df{$T\succ 0$}, if $\langle Tg,g\rangle >0$ for all $0\ne g\in \mathcal{G}.$
 An operator $T$ is \df{positive semidefinite} (\df{psd}), denoted \df{$T\succeq 0$}, if $\langle Tg,g\rangle \geq0$ for all $g\in \mathcal{G}.$

  \begin{theorem}[Noncommutative Fejér--Riesz theorem]
\label{t:main}
Let $\groupW$ denote either the free semigroup $\freesemi_\vg$ 
or the free product group $\Ztg$. 
    Fix $\Ww\in \groupW$ and  positive integers $M$ and $K$ 
    and let $A$ denote a given $M_K(\CC)$-valued trigonometric polynomial of bidegree at most $(\Ww,M).$
    If 
\[
   A(\pi)\succ 0
\]
    for each separable  Hilbert space $\HF$
    and  unitary representation $\pi:\Her\groupW\times\groupY \to \cB(\HF),$
then  there exist positive integers $K^\prime$ and $M^\prime$ and
   an $M_{K^\prime,K}(\CC)$-valued analytic polynomial $B$ of bidegree at most $(\Ww,M^\prime)$
   such that 
\begin{equation}\label{e:main1}
 A= B^*B.\index{Fejér--Riesz factorization}
\end{equation}
\end{theorem}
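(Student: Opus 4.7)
The plan is to derive the factorization by a Hahn--Banach separation argument supported by the two main technical ingredients advertised in the abstract: the group-valued positive semidefinite Parrott theorem and the word-indexed psd matrix completion.

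Step 1 (Convex separation). Fix $M' \ge M$ and consider, within the finite-dimensional real vector space of hermitian $M_K(\CC)$-valued trigonometric polynomials supported on $\Her\groupWw \times \Her\semiyN{M}$, the convex cone $\mathcal{C}_{M'}$ consisting of finite sums of $B^*B$ as $B$ ranges over analytic polynomials of bidegree at most $(\Ww, M')$. The goal is to show $A \in \mathcal{C}_{M'}$ for sufficiently large $M'$; from any such membership, a Cholesky factorization of the Gram matrix of the summands produces the required $B$, with bidegree automatically at most $(\Ww, M')$. Suppose not; for each $M'$, Hahn--Banach produces a hermitian linear functional $L$ with $L(A) \le 0$ and $L(B^*B) \ge 0$ for every allowed $B$. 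The latter condition is equivalent to positive semidefiniteness of the $M_K(\CC)$-valued moment matrix
\[
\cM_L \,:=\, \bigl( L(\lrh_1^{-1}\lrh_2) \bigr)_{\lrh_1, \lrh_2 \,\in\, \groupWw \times \semiyN{M'}}.
\]

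Step 2 (Representation via completion and Parrott). Starting from the partially positive $\cM_L$, the word-indexed psd matrix completion of ingredient (b) is used to extend $\cM_L$, in a manner compatible with the group operations of $\groupW$, to a fully positive semidefinite structure large enough to realize the action of all generators of $\Her\groupW\times\groupY$. The group-valued Parrott theorem of ingredient (a) supplies the basic extension step: at each stage it extends the action of a generator $x_j$ (or $\ztx_j$) by a contraction whose entries are $\rho$-equivariant, where $\rho$ is the partial action of $\groupY$ being built in parallel. A GNS procedure then produces a separable Hilbert space $\HF$, a unitary representation $\rho$ of $\groupY$ on $\HF$ (invoking $\groupY = \Her\lly$ to upgrade the semigroup moment data to a genuine group representation), and unitary operators $U_j$ on $\HF$ commuting with $\rho$ (self-adjoint in the $\Ztg$ case). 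Together these define a unitary representation $\pi$ of $\Her\groupW \times \groupY$ on $\HF$, and a cyclic GNS vector $\xi$ satisfies $\langle A(\pi)\xi, \xi\rangle = L(A) \le 0$, contradicting the hypothesis $A(\pi) \succ 0$. Hence $A \in \mathcal{C}_{M'}$ for some $M'$.

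Step 3 (Main obstacle). The principal difficulty lies in arranging the Parrott and matrix-completion extensions so that all relations of $\Her\groupW \times \groupY$ are preserved while keeping the shortlex bidegree bound $\Ww$. In the $\Ztg$ case this requires maintaining $U_j^2 = I$ without spoiling the completion; in the $\llx_\vg$ case one must cope with the coupling across the many pairs $(\lrh_1, \lrh_2)$ that share the same product $\lrh_1^{-1}\lrh_2$, so that Parrott extensions respect the diagonal-sum constraints coming from the coefficients $A_g$. The free product (respectively free semigroup) structure of $\groupW$, together with the total shortlex order, is exactly what permits these extensions to be performed independently on distinct generators, and the novelty of the paper is to develop (a) and (b) in the group-valued and word-indexed form that makes this induction stay within $(\Ww, M')$-bounded analytic polynomials at every level. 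The strict positivity hypothesis enters quantitatively as the slack that makes each Parrott contraction strictly feasible, guaranteeing that the completion can be carried out for some finite $M'$.
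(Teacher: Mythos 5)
Your proposal takes a Hahn--Banach/GNS route rather than the paper's route (the paper proves that $\Phi_A$ is completely positive on a truncated operator system, then applies Arveson extension and a Choi-matrix factorization; see Theorems~\ref{t:main:cp} and~\ref{t:main:again}), and as written Step~2 has a genuine gap. Your separating functional $L$ is defined only on polynomials supported in $\Her\groupWw\times\Her\semiyN{M'}$ for a \emph{fixed} $M'$, so its moment matrix is truncated in the $\groupY$-direction. The completion theorems of the paper (Theorems~\ref{l:compHX} and~\ref{l:completeZ2}) extend psd data only in the $\groupW$-direction, one shortlex level at a time, and they require the entries to be functions on all of $\groupY$ from the start; they say nothing about extending data truncated in the $y$-variables, and such an extension can genuinely fail: for $\groupY=\ZZ^{2}$, Example~\ref{eg:noT2} exhibits a psd truncated Toeplitz matrix with no psd extension. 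Hence the unitary representation $\pi$ and cyclic vector $\xi$ with $\langle A(\pi)\xi,\xi\rangle=L(A)\le 0$ that you invoke need not exist at fixed $M'$. This is precisely why the theorem carries no bound on $M'$ and why the paper needs the ingredients you omit: Proposition~\ref{no-eps-no-problem} to convert strict positivity into a uniform $\epsilon$, and the compactness/nested-intersection argument of Section~\ref{s:nested-sequences} (Lemmas~\ref{l:nested} and~\ref{l:nested:tilde}) to extract a single truncation level $W$ valid for all normalized $A\in\mathcal{P}_{\Ww,M}^{\epsilon}$; only data consistent at \emph{every} $y$-truncation level yield, via the $\groupW$-direction completions and Proposition~\ref{p:freeY}, an honest representation. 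Relatedly, your Step~3 claim that strict positivity provides ``slack'' making each Parrott step feasible misreads the mechanism: the Parrott and completion steps are pure psd statements needing no strict positivity, and in your Steps~1--2 strict positivity is never used until the final inequality.

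The argument can be repaired, but not at fixed $M'$: assume $A\notin\mathcal{C}_{M'}$ for every $M'$, check that each cone $\mathcal{C}_{M'}$ is closed (the identity coefficient of $\sum_j B_j^*B_j$ bounds the Gram data, a point you also skip), normalize the separating functionals by $L_{M'}(e)=1$, use psd-ness of the moment matrix and Cauchy--Schwarz to bound all values, and pass to a pointwise limit as $M'\to\infty$. The limit functional has a psd moment matrix indexed by $\groupWw\times\lly$ with full $\groupY$-data and $\groupW$-degree still at most $\Ww$; then Lemma~\ref{l:MonoidG+}, the completions of Theorems~\ref{l:compHX}/\ref{l:completeZ2}, and Proposition~\ref{p:freeY} produce a representation contradicting $A(\pi)\succ 0$. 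That repaired limiting argument is a legitimate alternative to the paper's operator-system proof (at the cost of losing the uniformity of $M'$ over $\mathcal{P}_{\Ww,M}^{\epsilon}$ noted in Remark~\ref{r:t:main:bounds}), but it is not what you wrote, and the fixed-$M'$ contradiction at the heart of your Step~2 is unjustified.
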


The proof of Theorem \ref{t:main} is given in Section \ref{sec:proofs}, see Theorem \ref{t:main:again}.

\begin{remark}\rm
\label{r:t:main:sos}\index{Positivstellensatz}
\pushQED{\qed} %
 The conclusion \eqref{e:main1} of Theorem~\ref{t:main}, 
 as well as that of  Corollary~\ref{c:main:noX} and
 Theorem~\ref{t:main:noY} below, can be interpreted as a sums of
  squares (sos) Positivstellensatz-type certificate. Namely, 
there exist positive integers $M^\prime$ and $r$ 
and $M_K(\CC)$-valued analytic
polynomials $B_j$ of bidegree at most $(\Ww,M')$ such that
\[
  A= \sum_{j=1}^r B_j^*B_j. \qedhere
\]
\end{remark}

\begin{remark}\rm
Because our groups are at most countable, it suffices to consider representations on separable Hilbert space 
for our results; e.g., 
Theorems \ref{t:main}. %

Let $G$ be a countable group and let $\cH$ be a Hilbert space. 
Write $\cB(\cH)[G]=\cB(\cH)\otimes \CC[G]$ for the (algebraic) tensor product, so every $A\in \cB(\cH)[G]$ has the form
\[
A=\sum_{g\in F} T_g\otimes g
\quad (F\subseteq G\text{ finite},\ T_g\in \cB(\cH)).
\]
Suppose there exists a Hilbert space $\cK$ and a unitary representation $\pi:\CC[G]\to \cB(\cK)$ such that
\[
(I\otimes \pi)(A)=\sum_{g\in F} T_g\otimes \pi(g)\in B(\cH\otimes \cK)
\]
is not positive semidefinite. Then there exists a {separable} Hilbert space $\cK_0$ and a unitary representation
$\pi_0:\CC[G]\to  \cB(\cK_0)$ such that $(I\otimes \pi_0)(A)$ is not positive semidefinite on $\cH\otimes \cK_0$.

Indeed, suppose $(I\otimes \pi)(A)$ fails to be positive semidefinite on $\cH\otimes \cK$. Hence there exists
$v\in \cH\otimes \cK$ with
\[
\langle (I\otimes \pi)(A)v,v\rangle<0.
\]
The quadratic form $q(x)=\langle (I\otimes \pi)(A)x,x\rangle$ is continuous, and the algebraic tensor product
$\cH\odot \cK$ is dense in $\cH\otimes \cK$. Therefore we may choose
a finite sum
\[
\check v=\sum_{j=1}^m h_j\otimes k_j \in \cH\odot \cK
\]
such that $\langle (I\otimes \pi)(A)\check v,\check v\rangle<0$.

Define
\[
\cK_0 = \overline{\operatorname{span}}\{ \pi(g)k_j\, \colon\, g\in G,\ 1\le j\le m \} \subseteq \cK.
\]
Because $G$ is countable and there are only finitely many $k_j$, the space $\cK_0$ is separable. It is invariant under
$\pi(G)$, so the restriction $\pi_0:=\pi|_{\cK_0}$ is a unitary representation of $G$ on $\cK_0$. Clearly $\check v\in \cH\otimes \cK_0$.
Moreover,
\[
\langle (I\otimes \pi_0)(A)\check v,\check v\rangle
=\langle (I\otimes \pi)(A)\check v,\check v\rangle<0.
\]
Hence $(I\otimes \pi_0)(A)$ is not positive semidefinite on $\cH\otimes \cK_0$, as claimed.\qed
\end{remark}

\begin{remark}\rm
\label{r:t:main:bounds}
 While there is no degree bound for $B$ in \eqref{e:main1} relative to the $y$ variables,
 the degree bound relative to the %
 {variables of $\groupW$}
 is best possible.  
The conclusion of Theorem \ref{t:main} without any degree bounds or analyticity on $B$
 is due to
Helton-McCullough \cite{HM04}.

  The proof of Theorem~\ref{t:main} gives a bit more than stated. 
  It turns out, if $A$ satisfies the hypotheses of Theorem~\ref{t:main}, then
  there is an $\epsilon>0$ such that for each separable  Hilbert space $\HF$
    and  unitary representation $\pi:\Her\groupW\times\groupY \to \cB(\HF)$ 
    the inequality
\[ 
  A(\pi)  \succeq \epsilon (I_{\HF} \otimes I_K)
\]
   holds. See Proposition~\ref{no-eps-no-problem}.  Moreover, with the normalization $A_{e,e}=I_K,$ for a fixed $\epsilon,$ 
   there is an $M^\prime$
  that depends only on $\epsilon,K,M$ and $|\groupWw|,$ that suffices for the conclusion of 
  Theorem~\ref{t:main}, though we do  not have a concrete
  estimate for the size of $M^\prime.$  See Theorem~\ref{t:main:again}.\qed
\end{remark}

\begin{remark}\rm
The choice of  $\groupY$ equal the direct product $\ZZ^\vh$ of $\ZZ$ with itself $\vh$ times 
in Theorem~\ref{t:main} is
of particular interest. Since  irreducible unitary representations of the group $\ZZ^\vh$
 are one-dimensional determined by points in the torus, it suffices to assume
 that $A(\pi)\succ 0$ for representations $\pi$ determined by 
 unitaries $U=(U_1,\dots,U_\vg)$ along with tuples $(\zeta_1,\dots,\zeta_\vh)$ 
 satisfying $|\zeta_j|=1.$  Moreover, in this case, choosing  $\groupW =\NN$ and thus $\Her\groupW = \ZZ,$
 reduces to the case of (classical) matrix-valued trigonometric polynomials in several (commuting) variables
 leading to the factorization result in \cite{Dritschel} at least in the matrix-valued case.
 \qed
\end{remark}

 In the case that $\groupY$ is not present (equivalently it is the trivial group) in Theorem \ref{t:main}, 
 the strict positivity hypothesis is not needed, a fact that
 appears in \cite{Mc01} for 
 the free semigroup $\llx_\vg.$
 (See also \cite{HMP04,BT07,NT13,KVV17,Oz13} for a similar result for the free group $\freeG_\vg$.)
   An analogous result holds for the case of $\Ztg$:

\begin{theorem}[Group algebra Positivstellensatz for $\Ztg$]
 \label{t:main:noY}
   Set $\groupW=\Ztg.$ 
   Let $\Ww\in \groupW,$ a Hilbert space $\HE$ and $A_\lWu\in \cB(\HE)$ for $\lWu\in \Her \groupWw$ be given
   and let 
\[
  A=\sum_{\lWu\in \Her \groupWw} { A_\lWu \lWu}
\]
 denote the resulting trigonometric polynomial. Thus the degree of $A$ is at most $\Ww.$  If $A(\pi)\succeq 0$
 for all unitary representations $\pi$ of $\Ztg$ on separable Hilbert space, then there is an auxiliary
 Hilbert space $\HE^\prime$ and an analytic
 polynomial $B$ of degree at most $\Ww$ with coefficients in $B(\HE,\HE^\prime)$ such that 
\[
 A= B^*B.
\]
\end{theorem}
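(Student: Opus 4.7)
The plan is to deduce Theorem~\ref{t:main:noY} from the strictly positive case (Theorem~\ref{t:main} with $\groupY$ trivial) by a perturbation-and-compactness argument. For $\epsilon>0$ set $A_\epsilon := A + \epsilon I_\HE$, meaning add $\epsilon I_\HE$ to the coefficient of $e$. For any unitary representation $\pi$ of $\Ztg$ on a separable Hilbert space $\HF$, one has
\[
A_\epsilon(\pi) \;=\; A(\pi) + \epsilon\,(I_\HF \otimes I_\HE) \;\succeq\; \epsilon\,(I_\HF \otimes I_\HE) \;\succ\; 0,
\]
so $A_\epsilon$ satisfies the strict positivity hypothesis of Theorem~\ref{t:main}. (A technical point: Theorem~\ref{t:main} is formulated with $M_K(\CC)$-valued coefficients, whereas here the coefficients lie in $\cB(\HE)$. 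One would either invoke an operator-valued extension of Theorem~\ref{t:main}, whose main ingredients—a Parrott theorem and degree-bounded psd matrix completion—are standardly robust to this generalization, or compress $A$ to finite-dimensional subspaces of $\HE$, apply the matrix version, and add a second limiting step.) Theorem~\ref{t:main} then yields analytic factorizations $A_\epsilon = B_\epsilon^* B_\epsilon$, with $B_\epsilon = \sum_{\Wu \in \groupWw} B_{\epsilon,\Wu}\,\Wu$ of degree at most $\Ww$ and coefficients $B_{\epsilon,\Wu} \in \cB(\HE,\HE_\epsilon')$ for an auxiliary Hilbert space $\HE_\epsilon'$ that may depend on $\epsilon$.

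To let $\epsilon \to 0^+$, I would repackage the data on a Hilbert space that does \emph{not} move with $\epsilon$ via the block Gram matrix
\[
L_\epsilon \;:=\; \bigl[B_{\epsilon,\Wu}^*\, B_{\epsilon,\Wv}\bigr]_{\Wu,\Wv \in \groupWw},
\]
viewed as a psd operator on the fixed Hilbert space $\HE^{|\groupWw|}$. Expanding $B_\epsilon^* B_\epsilon$ and collecting by group element gives
\[
\sum_{\substack{\Wu,\Wv \in \groupWw \\ \Wu^{-1}\Wv = g}} (L_\epsilon)_{\Wu,\Wv} \;=\; (A_\epsilon)_g, \qquad g \in \Her\groupWw.
\]
Taking $g=e$ yields $\sum_\Wu B_{\epsilon,\Wu}^* B_{\epsilon,\Wu} = A_e + \epsilon I_\HE$, so the diagonal blocks of $L_\epsilon$ are uniformly bounded; since $L_\epsilon \succeq 0$, this forces a uniform bound on $\|L_\epsilon\|$. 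Banach--Alaoglu then produces a sequence $\epsilon_n \downarrow 0$ with $L_{\epsilon_n} \to L$ weak-$*$ in $\cB(\HE^{|\groupWw|})$. The limit $L$ is psd, and the above linear identities pass to the weak-$*$ limit to give $\sum_{\Wu^{-1}\Wv=g} L_{\Wu,\Wv} = A_g$ for every $g \in \Her\groupWw$.

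It remains to factor $L$: write $L = D^* D$ with $D : \HE^{|\groupWw|} \to \HE'$ onto the closure of its range (for instance $D = L^{1/2}$), decompose $D$ blockwise as $D = [D_\Wu]_{\Wu \in \groupWw}$ with $D_\Wu : \HE \to \HE'$, and set $B := \sum_{\Wu \in \groupWw} D_\Wu\,\Wu$. Then $B$ is analytic of degree at most $\Ww$, and $B^* B = A$ by the same linear identities. I expect the main obstacle to be the operator-valued upgrade of Theorem~\ref{t:main} invoked at the outset; once that is settled, the rest is a soft compactness argument that is clean precisely because the degree bound confines the Gram matrix to the finite index set $\groupWw \times \groupWw$.
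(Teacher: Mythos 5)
Your strategy does work, but only through the second of your two patches, and it is a genuinely different proof from the paper's. The weak point is your first patch: the claim that an operator-valued upgrade of Theorem~\ref{t:main} is ``standardly robust'' is precisely what Remark~\ref{r:no-op-coefficients} cautions against. The Parrott and completion machinery of Sections~\ref{sec:parrott} and~\ref{s:freezedtwo} is indeed already stated for $\cB(\HE)$-valued data; the obstruction sits in the uniform truncation of Section~\ref{s:nested-sequences} (and in Proposition~\ref{no-eps-no-problem}), which uses matrix coefficients essentially, and this is exactly why Theorem~\ref{t:main} is stated for $M_K(\CC)$. For trivial $\groupY$ that obstruction evaporates, but then establishing the operator-valued strictly positive statement amounts to redoing the paper's proof of Theorem~\ref{t:main:noY} itself, so invoking it would be circular. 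Your second patch, however, is sound: compress to finite-dimensional subspaces $P_n\HE$ (compressions preserve the positivity hypothesis), add $\epsilon$, apply the matrix-valued Theorem~\ref{t:main} with trivial $\groupY$ to get degree-$\Ww$ factorizations, pass the Gram matrices $L_{n,\epsilon}$ --- uniformly bounded via the $g=e$ constraint --- to a weak-$*$ limit along a subnet (nets, since $\HE$ need not be separable), and factor the limiting psd block operator; the linear constraints and positivity survive, and $B^*B=A$ with the degree bound intact. The paper's proof is different in kind: it never introduces $\epsilon$ or a limit. There, iterating the $\Ztg$ completion Theorem~\ref{l:completeZ2} shows every partially defined psd matrix over $\Her\groupWw$ extends to a psd function on all of $\Ztg$, hence by Proposition~\ref{p:freeY} is a compression of a unitary representation; mere semidefiniteness of $A(\pi)$ then makes $\Phi_A$ completely positive, and Arveson extension plus factoring the Choi matrix produce $B$ directly, with operator coefficients handled natively. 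Your route buys a soft deduction of the semidefinite case from the strict, matrix-valued case; the paper's buys a direct argument that explains why strict positivity is never needed here at all.
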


\begin{remark}\rm
\label{r:main:Y}
   Theorem~\ref{t:main:noY} holds in the following form when $\groupY$ is a finite group 
   of cardinality $M$ for $\groupW$ either $\llx_g$ or $\Ztg.$ Namely, given $A$ as in equation~\eqref{eq:evalPoly} 
   with coefficients in $\cB(\HE)$ of bidegree $(w,M),$
   if $A(\pi)\succeq 0$ for all unitary representations $\pi$ of $\Her \groupW \times \groupY$ on separable Hilbert space,
   then there  exists an auxiliary Hilbert space $\HE^\prime$ 
   analytic $\cB(\HE,\HE^\prime)$  polynomial $B$ of bidegree $(w,M)$ such that $A=B^*B.$

   {In this setting, while not carried out here, a careful analysis of matrix completion argument given in the proofs
   show that it suffices to consider unitary representations $\pi$ on Hilbert space of dimension at most
   $MK\, |\groupWw|.$} 
\end{remark}

\begin{remark}\rm
Bell inequalities\index{Bell inequality} are pillars of quantum physics and quantum information theory.
They were introduced in the seminal paper \cite{Be64} and have been instrumental to experimentally demonstrate \cite{nobel} the validity of quantum mechanics. 
Violation of a Bell inequality serves as an indicator of entanglement and implies that a physical
interaction cannot be explained by any classical picture of physics \cite{brunner}. 
Mathematically a Bell inequality is simply a special type of inequality on trigonometric polynomials in the group algebra $\CC[\Ztg\times\Zth]$. The simplest example is the Clauser-Horne-Shimony-Holt (CHSH) inequality \cite{CHSH69}, where $\vg=\vh=2$, and letting $x_1,x_2$ and $y_1,y_2$ denote the generators of $\Ztg$ and $\Zth$, respectively, we have
\[
\text{CHSH}:\quad  x_1y_1+x_1y_2+x_2y_1-x_2y_2 \leq 2\sqrt2.
\]
Theorem \ref{t:main} provides a new degree-bounded Positivstellensatz-type certificate to validate Bell inequalities with matrix coefficients such as those appearing in quantum steering, cf.~\cite[Equation (12)]{steering}.\qed
\end{remark}

\begin{remark}\rm
The strict positivity assumption in Theorem \ref{t:main} is needed as the conclusion can fail if $A(\pi)$ is merely positive semidefinite at all unitary representations $\pi$. 
This is shown for the case of Bell inequalities in
$\CC[\Ztg\times\Zth]$ in \cite[Theorem 1.3]{FKMPRSZ+}.
Related phenomena occur beyond Bell settings: positivity in tensor products of free algebras is undecidable \cite{MSZ}; this underscores the necessity of  strict positivity in our factorization results.
\qed
\end{remark}

\begin{remark}\rm
  \label{r:noZm}
Theorem~\ref{t:main:noY} is special to $\Ztg$ in the sense that its
\textit{perfect} degree bound $\deg B \le \deg A$ need not hold for other free
products of finite cyclic groups.  In particular, Examples~\ref{eg:noZm} and \ref{e:noZ32}
produce trigonometric polynomials $A \in \CC[\ZZ_2 * \ZZ_3]$ and
$A \in \CC[\ZZ_3^{*2}]$, respectively, for which $A(\pi)\succeq 0$ for all unitary
representations $\pi$, but there is no factorization $A = B^* B$ with
$B$ analytic of degree at most $\deg A$.  Our examples do {not}
exclude the possibility that some weaker (non-optimal) degree bound
 might hold for these groups.  
\qed
\end{remark}

We note that in the case that $\groupW$ is not there, 
Theorem~\ref{t:main} produces the following Positivstellensatz, that, 
without the analyticity condition on $B$,  is a consequence of \cite{HM04}.

\begin{corollary}
 \label{c:main:noX}
Consider a $M_K(\CC)$-valued trigonometric polynomial in $y$ variables only,
\[A=\sum_{\gYu \in\Her\lly_{\leq M}} A_{\gYu}\, \gYu.
\]
    If %
    for each separable Hilbert space $\HF$
    and  unitary representation $\pi:\groupY \to \cB(\HF)$ 
    the inequality
$
A(\pi)  \succ 0
$
   holds, then  there exist positive integers $K^\prime$ and $M^\prime$ and
   an $M_{K^\prime,K}(\CC)$-valued analytic polynomial 
\[
    B=\sum_{\Ya \in\lly_{\leq M'}} B_{\Ya} \Ya
\]
   such that 
\[
 A= B^*B.
 \]
\end{corollary}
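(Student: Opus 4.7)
The plan is to derive Corollary \ref{c:main:noX} as a direct application of Theorem \ref{t:main} with a trivial $\groupW$-component, which amounts to a short reduction rather than a new argument.

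First, I would fix any $\vg\ge 1$ and take $\groupW=\llx_\vg$ (the choice $\groupW=\Ztg$ works equally well), and set $\Ww = e$, the identity word of $\groupW$. Under the shortlex order $e$ is the minimum element, so $\groupWw=\{e\}$ and $\Her\groupWw=\{e\}$. Since $A$ involves only $y$-terms, $A$ is then, by \eqref{eq:trigPoly}, an $M_K(\CC)$-valued trigonometric polynomial on $\Her\groupW\times\groupY$ of bidegree at most $(e,M)$.

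Second, I would check that the strict positivity hypothesis of Theorem \ref{t:main} holds. By \eqref{e:piUrho}, a unitary representation $\pi$ of $\Her\groupW\times\groupY$ on a separable Hilbert space $\HF$ is determined by a unitary representation $\rho$ of $\groupY$ together with a tuple of unitaries $U=(U_1,\dots,U_\vg)$ on $\HF$ commuting with $\rho$. Because $A$ contains only $y$-letters, the evaluation \eqref{eq:evalPoly} reduces to
\[
A(\pi) \;=\; \sum_{\gYu\in\Her\lly_{\le M}} A_{\gYu}\otimes \rho(\gYu) \;=\; A(\rho),
\]
which depends only on $\rho$. Hence the assumed uniform bound $A(\rho)\succeq \epsilon I_{\HF}$ for every unitary representation $\rho$ of $\groupY$ transfers verbatim to $A(\pi)\succeq \epsilon\,(I_{\HF}\otimes I_K)$ for every unitary representation $\pi$ of $\Her\groupW\times\groupY$, which is strictly stronger than the hypothesis $A(\pi)\succ 0$ needed for Theorem \ref{t:main}.

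Third, applying Theorem \ref{t:main} produces positive integers $K',M'$ and an $M_{K',K}(\CC)$-valued analytic polynomial $B$ of bidegree at most $(e,M')$ with $A=B^*B$. By the definition \eqref{eq:analPoly}, an analytic polynomial of bidegree $(e,M')$ is exactly one of the form $B=\sum_{\Ya\in\lly_{\le M'}}B_{\Ya}\,\Ya$, i.e., it contains no $\groupW$-letters. This is precisely the conclusion of the corollary. I do not expect a real obstacle: the argument is a bookkeeping reduction, and the only point requiring care is that, under the shortlex order, $\Ww = e$ genuinely collapses $\groupWw$ to $\{e\}$ and therefore forces the $\groupW$-part of $B$ to be trivial; this is immediate from the notation fixed in Section 1.1.
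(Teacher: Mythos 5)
Your reduction is correct and is essentially the paper's own argument: the paper obtains Corollary~\ref{c:main:noX} directly from Theorem~\ref{t:main} in the case that ``$\groupW$ is not there,'' and your choice $\Ww=e$ (so $\groupWw=\{e\}$ and $\Her\groupWw=\{e\}$, forcing bidegree $(e,M)$ and an $x$-free factor $B$) is just a precise way of implementing that collapse, with the hypothesis transfer $A(\pi)=A(\rho)\succeq\epsilon I\succ0$ handled correctly.
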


\subsection{Further results and guide to the paper}
 The claim regarding the existence of $\epsilon>0$ made in Remark~\ref{r:t:main:bounds} is established
 in Section~\ref{s:no-eps-no-problem}.
 Two ingredients of independent interest underlie the proofs of Theorems~\ref{t:main} and \ref{t:main:noY}. 
First, we establish a positive semidefinite (psd) Parrott theorem \cite{parrott} whose (block) entries are functions on a group $G$.
 See Theorem~\ref{p:PsD-Parrot-for-forms}.  This variant of 
 Parrott's theorem then feeds into solving psd matrix completion problems
 for the free semigroup $\llx_\vg$ and  the free product group $\Ztg.$ See Theorem~\ref{l:compHX} 
 and Theorem~\ref{l:completeZ2}, respectively. For $\Ztg$ the solution to the
 matrix completion problem is novel even for the case where $G$ is trivial.

 Section \ref{sec:psd_functions_on_groups_arise_from_unitary_representations}
 provides a bridge to representation theory. The fact that a psd function (kernel) on a group
 is realized as a compression of a unitary representation of that group applies
 to $\Ztg\times G$ and also extends to $\llx_{\vg}\times G,$
 see Proposition \ref{p:freeY}.   Section \ref{s:nested-sequences} 
packages the truncated Gram data into a compact, nested family and extracts a single truncation level depending only on a fixed compact collection of trigonometric polynomials. A precise statement appears as Lemma~\ref{l:nested:tilde} and it is this uniformity that produces degree bounds $M^\prime$ 
alluded to in Remark~\ref{r:t:main:bounds}.  
 Section \ref{sec:proofs} assembles these pieces to prove the Fej\'er–Riesz factorization Theorem \ref{t:main} with optimal $\groupW$–degree under uniform strict positivity and, for trivial $\groupY$, the ``perfect'' group–algebra Positivstellensatz Theorem \ref{t:main:noY} on $\Ztg$.

  Finally, Section \ref{s:examples} 
presents (counter)examples in the cases of  $\ZZ_2*\ZZ_3$ and $\ZZ_3^{*2}$
 that show our results are sharp.

\section{From positive definite to bigger than \texorpdfstring{$\varepsilon$}{e}} 
\label{s:no-eps-no-problem}
The claim made in Remark~\ref{r:t:main:bounds} is established in this section.

\begin{proposition}
\label{no-eps-no-problem}
Let $G$ be a countable group and let 
$A \in M_n(\CC)[G] = M_n(\CC)\otimes \CC[G]$.
If for every Hilbert space $\cK$ and every unitary representation
$\pi:\CC[G]\to \cB(\cK)$ the operator
\[
(I\otimes \pi)(A) \in \cB(\CC^n\otimes \cK)\cong M_n(\CC)\otimes \cB(\cK)
\]
is positive definite, then 
there exists $\varepsilon>0$ such that
\[
(I\otimes \pi)(A) \succeq \varepsilon\, I_{\CC^n\otimes \cK}
\]
for all $\cK$ and all unitary representations $\pi$ on $\cK$.
\end{proposition}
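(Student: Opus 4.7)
The plan is to embed $A$ into the universal group C*-algebra $M_n(C^*(G))$, deduce invertibility from the hypothesis, and extract a uniform spectral lower bound. Recall that $C^*(G)$, the full C*-algebra of the countable group $G$, has the universal property that every unitary representation $\pi:G\to \cB(\cK)$ extends uniquely to a *-representation $\sigma_\pi:C^*(G)\to \cB(\cK)$; conversely, every *-representation of $C^*(G)$ restricts to a unitary representation of $G$. Amplifying, every *-representation of $B:=M_n(\CC)\otimes C^*(G)=M_n(C^*(G))$ is unitarily equivalent to one of the form $I_n\otimes \sigma_\pi$ (via the standard matrix-unit decomposition for $M_n$-amplifications). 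Viewing $A\in M_n(\CC)[G]$ as an element of $B$, one first observes that $A$ is self-adjoint and positive in $B$: applying the hypothesis to the universal (hence faithful) representation $\pi_u$ shows $(I\otimes \pi_u)(A)$ is positive, and faithfulness transfers this back to $B$.

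Next, I would show that $A$ is invertible in $B$. Suppose, for contradiction, that $0\in \sigma_B(A)$. Since $A\succeq 0$ in $B$, the unital C*-subalgebra $C^*(A,1_B)$ is isomorphic to $C(\sigma_B(A))$, on which evaluation at $0$ is a state; extending via Hahn–Banach (the state-extension theorem) to a state $\phi$ on $B$ gives $\phi(A)=0$. The GNS construction produces a *-representation $\tilde\pi_\phi:B\to \cB(\cH_\phi)$ with cyclic vector $\xi$, and $\langle \tilde\pi_\phi(A)\xi,\xi\rangle=\phi(A)=0$ together with $\tilde\pi_\phi(A)\succeq 0$ forces $\tilde\pi_\phi(A)\xi=0$; in particular $\tilde\pi_\phi(A)$ is not positive definite. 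Decomposing $\tilde\pi_\phi=I_n\otimes \sigma$ and letting $\pi$ be the unitary representation of $G$ corresponding to the *-representation $\sigma$ of $C^*(G)$, we obtain $(I\otimes \pi)(A)=\tilde\pi_\phi(A)$, which is not strictly positive, contradicting the hypothesis.

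Once $A$ is known to be positive and invertible in $B$, continuous functional calculus yields $\sigma_B(A)\subseteq [\varepsilon,\infty)$ for some $\varepsilon>0$, equivalently $A\succeq \varepsilon\cdot 1_B$ in $B$. Since every *-representation is positive and unital, applying $I\otimes \pi$ for an arbitrary unitary representation $\pi$ of $G$ preserves this inequality, producing $(I\otimes \pi)(A)\succeq \varepsilon\, I_{\CC^n\otimes \cK}$, uniformly in $\cK$ and $\pi$, as required.

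The main obstacle is the passage from ``$\tilde\pi_\phi(A)$ is not positive definite'' back to ``some $(I\otimes \pi)(A)$ fails to be positive definite'': this depends on both the universal property of $C^*(G)$ (to realize the GNS *-representation of $B$ as the amplification of a *-representation coming from a genuine unitary representation of $G$) and on the matrix-unit splitting of *-representations of $M_n(C^*(G))$. Both facts are standard, but together they form the bridge that translates the hypothesis, stated in terms of $G$-representations, into a statement about $B$ to which spectral theory may be applied.
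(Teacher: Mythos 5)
Your argument is correct, and its skeleton is the same as the paper's: view $A$ inside $M_n(\CC)\otimes C^*(G)$, use GNS together with the standard fact that every representation of $M_n(\CC)\otimes C^*(G)$ splits as $\mathrm{id}_{M_n}\otimes\sigma$ (this is exactly the paper's folklore lemma), and translate arbitrary states back into unitary representations of $G$ to which the hypothesis applies. Where you diverge is the final mechanism for extracting $\varepsilon$. The paper shows directly that $\varphi(A)>0$ for \emph{every} state $\varphi$, takes $\varepsilon=\min_\varphi\varphi(A)$ using weak$^*$-compactness of the state space, and concludes $A-\varepsilon 1\succeq 0$ because states separate order. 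You instead first establish $A\succeq 0$ via the faithful universal representation, then prove invertibility by contradiction: if $0\in\sigma(A)$, functional calculus on $C^*(A,1)$ plus state extension produces a state $\phi$ with $\phi(A)=0$, whose GNS representation (after the $I_n\otimes\sigma$ splitting) yields a unitary representation of $G$ at which $A$ is not positive definite; then $\varepsilon=\min\sigma(A)>0$ does the job. The two routes are essentially dual descriptions of the same quantity, since $\min\sigma(A)=\min_\varphi\varphi(A)$ for self-adjoint $A$; yours trades the compactness-of-states argument for the state-extension theorem and spectral permanence, and needs the extra preliminary step $A\succeq0$, while the paper's version gets self-adjointness and the bound in one sweep over all states. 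One small point worth making explicit in your writeup: the GNS representation of the unital algebra $M_n(\CC)\otimes C^*(G)$ is unital, hence the factor $\sigma$ in the splitting is unital, which is what guarantees that $\sigma$ restricts to a genuine \emph{unitary} representation $\pi$ of $G$; with that noted, the contradiction step is airtight.
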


\begin{proof}
View $A\in M_n(\CC)[G]$ 
as an element of $M_n(\CC)\otimes C^*(G)$,
where $C^*(G)$ denotes the full (or universal) group $C^*$-algebra \cite[Definition 9.12.4]{Da25}.
By hypothesis, for every unitary representation $\pi$ of $G$ on $\cK$ the operator
$(\mathrm{id}_{M_n}\otimes \pi)(A)$ is strictly positive on $\CC^n\otimes \cK$.

Let $S$ be the state space of $M_n(\CC)\otimes C^*(G)$. By the GNS construction,
each $\varphi\in S$ is of the form
\[
\varphi(B)=\langle \rho(B)\xi,\xi\rangle
\]
for some representation $\rho$ of $M_n(\CC)\otimes C^*(G)$ on
a Hilbert space $\cL$ and some unit vector $\xi\in \cL$.

Lemma~\ref{l:folklore} below  
is %
well known. It shows that
$\rho$ is unitarily equivalent to a representation of the form
$\mathrm{id}_{M_n(\CC)}\otimes \sigma$ for a suitable  representation
$\sigma:C^*(G)\to B(\cL_0)$ on some Hilbert space $\cL_0$. Consequently,
\[
\varphi(A)=\langle (\mathrm{id}_{M_n(\CC)}\otimes \sigma)(A)\eta,\eta\rangle
\]
for some unit vector $\eta\in \CC^n\otimes \cL_0$. By the assumption of
strict positivity for all unitary representations, the operator
$(\mathrm{id}_{M_n(\CC)}\otimes \sigma)(A)$ is strictly positive, hence
$\varphi(A)>0$ for every $\varphi\in S$.  

The map $S\to\RR$, defined by $\varphi\mapsto \varphi(A)$, is continuous and $S$ is
weak$^*$-compact; therefore $\varphi$ attains its minimum at some $\varphi_0\in S$.
Since $\varphi(A)>0$ for all $\varphi$, this minimum is a positive number:
set $\epsilon:=\min_{\varphi\in S}\varphi(A)>0$. Thus for all states $\varphi\in S,$
\begin{equation}
    \label{e:no-eps}
     \varphi(A-\epsilon 1)=\varphi(A)-\epsilon\ \ge\ 0. 
\end{equation}

In a $C^*$-algebra, order is separated by states: for self-adjoint $x$,
one has $x\succeq 0$ if and only if  $\varphi(x)\ge 0$ for all states $\varphi$.
Applying  this fact to the self-adjoint element $A-\epsilon\,1$ it follows from 
equation~\eqref{e:no-eps} that $A-\epsilon \, 1 \succeq 0.$
Finally, for any unitary representation
$\pi:\CC[G]\to \cB(\cK)$,
\[
(\mathrm{id}_{M_n(\CC)}\otimes \pi)(A)\ \ge\ \epsilon I_{n}\otimes I_\cK,
\]
as desired.
\end{proof}

\begin{lemma}
\label{l:folklore}
Let $A$ be a $C^*$-algebra. If $\rho:M_n(\CC)\otimes A\to \cB(\cL)$ a
representation, then there exist a Hilbert space $\cL_0$ and a
representation $\sigma:A\to \cB(\cL_0)$ such that, after a unitary
identification $\cL\cong \CC^n\otimes \cL_0$,
\[
\rho(X\otimes a) = X\otimes \sigma(a)
\]
for all 
$X\in M_n(\CC)$, $a\in A$.
\end{lemma}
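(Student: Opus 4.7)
The plan is to use the matrix units $\{e_{ij}\}$ of $M_n(\CC)$ to split $\cL$ into $n$ isomorphic pieces and to read $\sigma$ off the $(1,1)$-corner of $\rho$. After reducing to the case of non-degenerate $\rho$ (by replacing $\cL$ with $\overline{\rho(M_n(\CC)\otimes A)\cL}$) and unital $A$ (by passing to the unitization of $A$ and using the canonical unital extension of the representation, if necessary), I would carry out the following three steps.

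First, I would produce the orthogonal decomposition of $\cL$. Setting $P_i:=\rho(e_{ii}\otimes 1)$, the elements $e_{ii}\otimes 1$ are pairwise orthogonal projections summing to the unit, so $\{P_i\}$ are pairwise orthogonal projections in $\cB(\cL)$ with $\sum_{i=1}^n P_i=I_\cL$, yielding $\cL=\bigoplus_{i=1}^n P_i\cL$. Put $\cL_0:=P_1\cL$ and $V_i:=\rho(e_{i1}\otimes 1)$. The identities $e_{1i}e_{i1}=e_{11}$ and $e_{i1}e_{1i}=e_{ii}$ give $V_i^*V_i=P_1$ and $V_iV_i^*=P_i$, so each $V_i:\cL_0\to P_i\cL$ is unitary. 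Hence the map $U:\CC^n\otimes \cL_0\to \cL$, $U(e_i\otimes \xi):=V_i\xi$, is a unitary identification.

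Second, I would define $\sigma:A\to \cB(\cL_0)$ by $\sigma(a):=\rho(e_{11}\otimes a)\vert_{\cL_0}$. The relation $(e_{11}\otimes 1)(e_{11}\otimes a)=(e_{11}\otimes a)(e_{11}\otimes 1)=e_{11}\otimes a$ shows that $\rho(e_{11}\otimes a)$ commutes with $P_1$, so $\cL_0$ is invariant. Multiplicativity and $*$-preservation are inherited from $\rho$ via $(e_{11}\otimes a)(e_{11}\otimes b)=e_{11}\otimes ab$ and $(e_{11}\otimes a)^{*}=e_{11}\otimes a^{*}$.

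Third, I would verify the intertwining relation on elementary tensors. The key computation
\[
(e_{ij}\otimes a)(e_{k1}\otimes 1)=\delta_{jk}(e_{i1}\otimes a)=\delta_{jk}(e_{i1}\otimes 1)(e_{11}\otimes a)
\]
gives, for $\xi\in\cL_0$, that $\rho(e_{ij}\otimes a)\,U(e_k\otimes\xi)=\delta_{jk}V_i\sigma(a)\xi=U\bigl(\delta_{jk}e_i\otimes\sigma(a)\xi\bigr)$, whence $U^{*}\rho(e_{ij}\otimes a)U=e_{ij}\otimes \sigma(a)$. Linear extension in the first tensor factor then yields $U^{*}\rho(X\otimes a)U=X\otimes\sigma(a)$ for all $X\in M_n(\CC)$ and $a\in A$. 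The argument is essentially mechanical; the only real point to be careful about is the initial reduction to the non-degenerate, unital setting.
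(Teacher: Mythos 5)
Your proof is correct, and it takes a genuinely different (more self-contained) route than the paper's. The paper argues by citation in two steps: it restricts $\rho$ to $M_n(\CC)\otimes 1$ and invokes the classification of representations of the finite-dimensional algebra $M_n(\CC)$ (Arveson) to get the identification $\cL\cong\CC^n\otimes\cL_0$ with $\rho(X\otimes 1)=X\otimes I$, and then uses a double-commutant-style argument: $\rho(1\otimes a)$ commutes with $M_n(\CC)\otimes I$, hence must have the form $I_n\otimes\sigma(a)$, and multiplying the two gives the conclusion. You instead build the unitary explicitly from the matrix units, via $P_i=\rho(e_{ii}\otimes 1)$ and the partial isometries $V_i=\rho(e_{i1}\otimes 1)$, and read $\sigma$ off the $(1,1)$-corner, $\sigma(a)=\rho(e_{11}\otimes a)\vert_{\cL_0}$, checking the intertwining relation by direct computation on elementary tensors. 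This essentially unwinds the content of the two cited results and avoids the commutant argument altogether; what it buys is a proof with no external references, at the cost of a slightly longer verification, while the paper's version is shorter but leans on standard structure theory.

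One caveat about your opening reduction: it is not quite innocuous. For a genuinely degenerate $\rho$ the lemma as stated can fail outright --- take $n=2$, $A=\CC$, $\cL=\CC^3$ and $\rho(X\otimes\lambda)=\lambda X\oplus 0$; then $\cL$ cannot be unitarily identified with $\CC^2\otimes\cL_0$ --- so cutting down to $\overline{\rho(M_n(\CC)\otimes A)\cL}$ proves the statement only on the essential subspace rather than reducing the general case to it. This is harmless in context, since the paper tacitly makes the same non-degeneracy/unitality assumption (its proof needs $\rho\vert_{M_n(\CC)\otimes 1}$ to be non-degenerate) and in the application $\rho$ is a unital GNS representation; but you should phrase it as a standing convention on representations rather than as a reduction.
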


\begin{proof}
The restriction $\rho|_{M_n(\CC)\otimes 1}$ is a representation of the
finite-dimensional algebra $M_n(\CC)$, hence unitarily equivalent to
$X\mapsto X\otimes I_{\cL_0}$ on $\CC^n\otimes \cL_0$ for a suitable
Hilbert space $\cL_0$ (cf.~\cite[p.~20, Corollary 1]{Ar76}).
Under this identification, $\rho(1\otimes a)$ commutes with $M_n(\CC)\otimes I_{\cL_0}$,
so 
it must be of the form $I_n\otimes \sigma(a)$
for a  representation $\sigma:A\to \cB(\cL_0)$
(cf.~proof of the von Neumann Double Commutant Theorem in \cite[Theorem 9.4.1]{Da25}).
Hence 
\[\rho(X\otimes a) = \rho(X\otimes 1)\rho(1\otimes a)
= (X\otimes I)(I\otimes \sigma(a))=X\otimes \sigma(a).
\qedhere
\]
\end{proof}

\section{The Parrott theorem for psd functions on a group}\label{sec:parrott}
This section contains the statement and proof of an of independent interest
version of the well known psd version of the Parrott theorem, \cite{parrott}. In that paper
it is shown that if the given entries of the relevant block matrices come from
a von Neumann algebra, then there is a solution to the completion problem
that comes from the von Neumann algebra. Here we show that if the entries
are functions on a group, then the solution can be chosen to also be a function 
on that group.  %

Let $G$ denote a group and fix a Hilbert space $\HE.$
 Given a function $p:G\to B(\HE),$
  let $\Upsilon_p$\index{$\Upsilon_p$} denote the associated block matrix,
\begin{equation}
 \Upsilon_p = \begin{pmatrix} p(g^{-1}h) \end{pmatrix}_{g,h\in G}.
\end{equation}
  Let 
\begin{equation}\label{eq:funnyF}
 \mathscr{F} = C_{00}(G,\HE)=\{\phi:G\to \HE, \,  |\{g:\phi(g)\ne 0\}|<\infty \}.
\end{equation}
 Thus $\mathscr{F}$ consists of those $\HE$-valued functions $\phi$ on $G$ 
 such that  $\phi(g)=0$  for all but finitely many $g\in G.$

  The matrix $\Upsilon_p$ determines a sesquilinear \df{form} on $\mathscr{F}$ by
\begin{equation}
    \label{e:Yp}
   \langle \Upsilon_p \varphi,\psi\rangle = \sum_{g,h\in G} \langle p(g^{-1}h) \varphi(h),\psi(g)\rangle
\end{equation}
  for $\varphi,\psi\in\mathscr{F}.$   The form    $\Upsilon_p$ is \df{positive definite}  (pd) \index{pd}
  (resp. \df{positive semidefinite} (psd)) if 
\[
  \langle \Upsilon_p \varphi,\varphi \rangle >0 
\]
  (resp. $\langle \Upsilon_p \varphi,\varphi \rangle \ge 0$)  
 whenever $0\ne \varphi \in \mathscr{F}.$  In particular, $p(g^{-1}h) = p(h^{-1}g)^* \in B(\HE)$
 for each $g,h\in G.$ 

 An element $\lralpha\in G$ 
  induces a bijective  linear map $L_{\lralpha}:\mathscr{F}\to\mathscr{F}$ defined by 
\begin{equation}
    \label{e:Lalpha}
L_\lralpha \varphi(g)= \varphi(\lralpha^{-1}g),
\end{equation} 
 for $\varphi\in\mathscr{F}.$ This map $L_\lralpha$
  is unitary with respect to the form induced by $\Upsilon_p$  
  since, by equation~\eqref{e:Yp}, 
\begin{equation}
    \label{e:YpLx}   
    \begin{split}
 \langle \Upsilon_p  L_{\lralpha} \varphi, L_\lralpha \psi \rangle 
   & = \sum_{g,h\in G} \big \langle  p(g^{-1}h) \varphi(\lralpha^{-1}h),\psi(\lralpha^{-1}g)\big \rangle
\\[5pt] & =  \sum_{g,h\in G} \big \langle p(\lralpha^{-1}g)^{-1} (\lralpha^{-1} h)) \varphi(\lralpha^{-1}h),\psi(\lralpha^{-1}g) \big \rangle
\\[5pt]  & = \langle \Upsilon_p\varphi,\psi\rangle.
\end{split}
\end{equation}
 Fix a positive integer $N$, let  $J_N=\{0,1,\dots,N\}$, and fix
 functions $p_{j,k}:G\to B(\HE)$ for $(j,k)\in J_N\times J_N \setminus \{(0,N),(N,0)\}.$ Let
  $\Upsilon_{j,k}=\Upsilon_{p_{j,k}}$ denote the matrix associated to $p_{j,k}.$
   We assume that $\Upsilon_{k,j}$ is the formal adjoint to $\Upsilon_{j,k}$ in the sense that
\[
  \langle \Upsilon_{k,j} \varphi,\psi\rangle =  \overline{ \langle \Upsilon_{j,k} \psi, \varphi\rangle}.
\]
 
Let   $A=\Upsilon_{0,0},\, C=\Upsilon_{N,N}$, and 
\[
\begin{split}
 B & =\begin{pmatrix} \Upsilon_{j,k} \end{pmatrix}_{j,k=1}^{j,k = N-1}  
 \\ \pE & = \begin{pmatrix} \Upsilon_{0,1} & \Upsilon_{0,2} & \dots & \Upsilon_{0,N-1} \end{pmatrix}
  \\ \pF^* & = \begin{pmatrix} \Upsilon_{1,N}^* & \dots &\Upsilon_{N-2,N}^* & \Upsilon_{N-1,N}^{*} \end{pmatrix}.
\end{split}
\]
  The  matrix $B$ defines a form $[\cdot,\cdot]_B$ on
  $ \mathscr{F}^{N-1} \times \mathscr{F}^{N-1},$ where
  $ \mathscr{F}^{N-1} := \oplus_{1}^{N-1} \mathscr{F},$ 
  in the natural way. Likewise,  $\pE$ defines a form on $\mathscr{F}^{N-1} \times \mathscr{F}$;
  and $\pF$ defines a form on  $\mathscr{F} \times \mathscr{F}^{N-1}$. 
 For instance, given $\varphi^\prime=\oplus_1^{N-1} \varphi^\prime_j \in \mathscr{F}^{N-1 }$ and similarly
 for $\psi^\prime \in\mathscr{F}^{N-1},$
\[
 [\varphi^\prime,\psi^\prime]_B=  \langle B \varphi^\prime, \psi^\prime \rangle 
   = \sum_{j,k=1}^{N-1}  \langle \Upsilon_{j,k} \varphi^\prime_k,\,  \psi^\prime_j \rangle 
     = \sum_{j,k=1}^{N-1}  \sum_{g,h\in G} \blangle p_{j,k}(g^{-1}h) \varphi^\prime_k(h), \,  \psi_j^\prime(g) \brangle 
\]
(where the sums are finite)
 and $B$ is \df{positive semi-definite (psd)}  \index{psd}
 means $\langle B\varphi^\prime,\varphi^\prime\rangle  \ge 0$ for all $\varphi^\prime \in \oplus_1^{N-1} \mathscr{F}.$

The following is an analog of the psd version of the Parrott theorem. 
\begin{theorem}
\label{p:PsD-Parrot-for-forms}\index{Parrott theorem, psd form}
    If both the forms on $\mathscr{F}^N \times \mathscr{F}^N$ 
\[
 P=  \begin{pmatrix} A & \pE \\ \pE^* & B \end{pmatrix}, \ \ \ 
 Q=  \begin{pmatrix} B & \pF \\ \pF^* & C \end{pmatrix}
\]
 are psd (and if, as above,   $A$, $B$, and $C$ are positive definite (pd)),  
 then there is a function $p:G\to B(\HE)$ such that,
 with $\Upsilon_{0,N} = \Upsilon_{p}$ and $\Upsilon_{N,0} = \Upsilon_p^* = \Upsilon_{p^*}$, the form $\Upsilon=(\Upsilon_{j,k})_{j,k=0}^{N}$
 on $\mathscr{F}^{N+1},$
\[
 \Upsilon = \begin{pmatrix} A & \pE & \Upsilon_p
   \\ \pE^* & B & \pF \\ \Upsilon_p^* & \pF^* & C \end{pmatrix},
\]
  is psd.
\end{theorem}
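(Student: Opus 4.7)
The plan is to lift the problem to Hilbert spaces via GNS, apply the elementary product-of-contractions solution of Parrott's theorem there, and then read off the desired function $p:G\to\cB(\HE)$ from the $G$-equivariance of the completion. Since $A,B,C$ are pd, form the GNS Hilbert spaces $\cH_A,\cH_B,\cH_C$ as the completions of $\mathscr{F},\mathscr{F}^{N-1},\mathscr{F}$ under the inner products $\langle A\cdot,\cdot\rangle,\langle B\cdot,\cdot\rangle,\langle C\cdot,\cdot\rangle$; write $[\varphi]_A\in\cH_A$ etc.\ for the canonical image of $\varphi\in\mathscr{F}$. By \eqref{e:YpLx} (applied coordinate-wise on $\mathscr{F}^{N-1}$), each left translation $L_\lralpha$ is isometric on the corresponding pre-Hilbert space, hence extends to a unitary $G$-representation on each of $\cH_A,\cH_B,\cH_C$. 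The hypothesis $P\succeq 0$ combined with the Cauchy--Schwarz inequality for psd forms yields
\[
|\langle \pE\varphi',\varphi_0\rangle|^2\le\langle A\varphi_0,\varphi_0\rangle\,\langle B\varphi',\varphi'\rangle,
\]
so $\pE$ extends to a contraction $\tilde\pE:\cH_B\to\cH_A$; likewise $Q\succeq 0$ yields a contraction $\tilde\pF:\cH_C\to\cH_B$. Since each entry $\Upsilon_{j,k}$ is $G$-equivariant, both $\tilde\pE$ and $\tilde\pF$ intertwine the corresponding $G$-representations.

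Set $X_0:=\tilde\pE\tilde\pF:\cH_C\to\cH_A$, a $G$-equivariant contraction. A direct Schur-complement computation shows
\[
M:=\begin{pmatrix} I_{\cH_A} & \tilde\pE & X_0 \\ \tilde\pE^* & I_{\cH_B} & \tilde\pF \\ X_0^* & \tilde\pF^* & I_{\cH_C} \end{pmatrix} \succeq 0,
\]
since its Schur complement with respect to the middle identity equals the block diagonal $\mathrm{diag}(I-\tilde\pE\tilde\pE^*,\, I-\tilde\pF^*\tilde\pF)\succeq 0$.

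Because $X_0$ intertwines the $G$-representations on $\cH_C$ and $\cH_A$, for each $\lralpha\in G$ the pairing $(\xi,\eta)\mapsto\langle X_0[\delta_\lralpha\otimes\xi]_C,[\delta_e\otimes\eta]_A\rangle$ is bounded on $\HE\times\HE$, using the estimate $\|[\delta_g\otimes h]_C\|^2\le \|p_{N,N}(e)\|\,\|h\|^2$ (and its $\cH_A$ analogue); hence it defines a unique element $p(\lralpha)\in\cB(\HE)$. Using $L_\lralpha\delta_g=\delta_{\lralpha g}$ and the equivariance of $X_0$ to reduce $\langle X_0[\delta_{h_0}\otimes\xi]_C,[\delta_{g_0}\otimes\eta]_A\rangle$ to $\langle p(g_0^{-1}h_0)\xi,\eta\rangle$, and then extending by sesquilinearity, one obtains
\[
\langle X_0[\varphi]_C,[\psi]_A\rangle = \langle \Upsilon_p\varphi,\psi\rangle
\]
for all $\varphi,\psi\in\mathscr{F}$. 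Taking $\Upsilon_{0,N}:=\Upsilon_p$ and $\Upsilon_{N,0}:=\Upsilon_p^*$, a term-by-term check identifies $\langle\Upsilon\varphi,\varphi\rangle$ with $\langle Mh,h\rangle$ for any $\varphi=(\varphi_0,\varphi',\varphi_N)\in\mathscr{F}^{N+1}$ and $h=([\varphi_0]_A,[\varphi']_B,[\varphi_N]_C)\in\cH_A\oplus\cH_B\oplus\cH_C$; since $M\succeq 0$, the form $\Upsilon$ is psd.

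The main conceptual obstacle is reconciling the abstract Hilbert-space Parrott completion with the requirement that it arise from a form of type $\Upsilon_p$, i.e., that it be $G$-equivariant. This is the whole point of choosing the product-of-contractions solution $X_0=\tilde\pE\tilde\pF$, which is manifestly equivariant (as a composition of equivariant maps) and then inverts the correspondence ``equivariant form $\leftrightarrow$ function on $G$'' in the last step. All remaining ingredients---the GNS construction, the Cauchy--Schwarz extension of partial forms to bounded operators, and the Schur-complement argument---are standard.
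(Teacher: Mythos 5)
Your proposal is correct and follows essentially the same route as the paper's proof: GNS completions of the pd forms, Cauchy--Schwarz contractions $\widetilde{\pE},\widetilde{\pF}$, the $G$-equivariant product completion $\widetilde{\pE}\widetilde{\pF}$, positivity of the $3\times 3$ operator block (your Schur-complement computation is just a rephrasing of the paper's explicit factorization), and recovery of $p$ from the matrix entries at $(\chi_h,\chi_e)$ via equivariance. No gaps worth flagging.
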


\begin{proof}
  The pd form $B$ defines a positive definite sesquilinear form 
  $[\cdot,\cdot]_B$ on $\mathscr{F}^{N-1}.$   
  Indeed, that is precisely what it means to say $B$ is pd. 
  Let $\cM_B$ denote the inner product space $(\mathscr{F}^{N-1},[\cdot,\cdot]_B)$ and 
  let $\HE_B$ denote the Hilbert space obtained by completing $\cM_B.$ 
  In the same manner, let $\cM_A$ and $\cM_C$ denote the inner product spaces
  on $\mathscr{F}$ induced by the pd forms $[\cdot,\cdot]_A$ and $[\cdot,\cdot]_C$
  and let $\HE_A$ and $\HE_C$ denote the Hilbert spaces obtained by completing $\cM_A$ and $\cM_C$ 
  respectively. By construction, $\mathscr{F} \subseteq \cM_A, \cM_C$
and $\mathscr{F}^{N-1} \subseteq \cM_{B}$.
   
   The linear map $I_{N-1}\otimes L_\lralpha,$ where $L_\alpha$ is defined
   in equation~\eqref{e:Lalpha}, acts   on $\mathscr{F}^{N-1}$ in the natural way;  that is, for 
  $\varphi^\prime =\oplus_{j=1}^{N-1} \varphi^\prime_j,$
\[
  (I_{N-1}\otimes L_{\lralpha}) \varphi^\prime =\oplus_{j=1}^{N-1} L_{\lralpha} \varphi^\prime_j.
\]
  Often we simplify and write $L_\alpha$ in place of $I_{N-1}\otimes L_\alpha.$ 
   From equation~\eqref{e:YpLx}, it follows,  
   for $\varphi,\psi\in \mathscr{F}$ and $\varphi^\prime,\psi^\prime  \in \mathscr{F}^{N-1},$ that
\begin{align}
 [L_\lralpha \varphi, L_\lralpha \psi]_A =  \langle A L_\lralpha \varphi, L_\lralpha \psi \rangle & =
 { \langle \Upsilon_{0,0} L_\lralpha \varphi, L_\lralpha \psi \rangle=
 \langle \Upsilon_{0,0} \varphi, \psi\rangle }=    \langle A \varphi, \psi\rangle  = [\varphi, \psi]_A  
  \label{e:invariant:form0} 
\\  \langle \pE L_\lralpha \varphi^\prime, L_\lralpha \psi \rangle  & = 
{  \sum_{j=1}^{N-1} \langle \Upsilon_{0,j} L_\lralpha    \varphi'_j, L_\lralpha \psi     \rangle =
\sum_{j=1}^{N-1} \langle \Upsilon_{0,j}     \varphi'_j,  \psi     \rangle
}= 
\langle \pE \varphi^\prime, \psi \rangle  \label{e:invariant:forms1} 
\end{align}
and 
\begin{equation}
    \label{e:invariant:forms2}
\begin{split}
 [L_\alpha \varphi^\prime, L_\lralpha \psi^\prime]_B
 & = \langle B L_\lralpha \varphi^\prime, L_\lralpha \psi^\prime \rangle   = 
\sum_{j,k=1}^{N-1} \langle \Upsilon_{j,k} L_\lralpha    \varphi'_k, L_\lralpha \psi'_j  \rangle 
\\[3pt] & =
\sum_{j,k=1}^{N-1} \langle \Upsilon_{j,k}  \varphi'_k,  \psi'_j     \rangle
  =  \langle B \varphi^\prime,  \psi^\prime \rangle = [\varphi^\prime, \psi^\prime]_B. 
\end{split}
\end{equation}
 In particular, $L_\lralpha$ induces unitary operators  $S^A_\lralpha$ and $S^B_\lralpha$ on $\cM_A$ and $\cM_B$ respectively 
 that then extend to unitary operators on $\HE_A$ and $\HE_B,$ still denoted $S^A_\lralpha$ and $S^B_\lralpha.$
 For instance, for $\varphi,\psi\in \mathscr{F}\subseteq \HE_A,$ an application of equation~\eqref{e:invariant:form0}
 gives,
\[
 [S_\lralpha^A \varphi, S_\lralpha^A \psi]_A=
 [L_\lralpha \varphi, L_\lralpha \psi]_A = [\varphi, \psi]_A. 
\]

 Because $P$ is psd, for $\psi \in \mathscr{F}$ and $\varphi^\prime \in\mathscr{F}^{N-1},$
  a version of the Cauchy-Schwartz inequality gives, 
\begin{equation}
\label{cauchySchwartz}
 \big |\langle  \pE \varphi^\prime, \psi \rangle \big |^2 \le [\psi,\psi]_A \, [\varphi^\prime,\varphi^\prime]_B.
\end{equation}
 To prove this claim note that, for complex numbers $\lambda$ 
\[
\begin{split}
   \langle P \begin{pmatrix} \psi\\ - \lambda \varphi^\prime \end{pmatrix}, \,
      \begin{pmatrix} \psi \\ - \lambda \varphi^\prime \end{pmatrix} \rangle
    & =  \langle A\psi, \psi \rangle - 2\real ( \lambda \langle \pE \varphi^\prime,\psi \rangle) + |\lambda|^2 \langle B\varphi^\prime,\varphi^\prime\rangle
    \\ & = [\psi,\psi]_A - 2\real (\lambda \langle \pE \varphi^\prime,\psi \rangle) + |\lambda|^2 [\varphi^\prime,\varphi^\prime]_B.
\end{split}
\]
 If $[\varphi^\prime,\varphi^\prime]_B=0,$ then ($\varphi^\prime=0$ and) the positivity hypothesis on $P$ 
 implies it is also
 the case that $\langle \pE\varphi^\prime,\psi\rangle =0$ so that \eqref{cauchySchwartz} holds. Otherwise,  
 the positivity condition and the usual judicious choice 
 $ [\varphi^\prime,\varphi^\prime]_B \, \lambda= \overline{\langle \pE\varphi^\prime,\psi \rangle}$  does the job.
 Hence, for $\varphi^\prime$ fixed, the mapping $\HE_A \ni \psi \mapsto \overline{\langle \pE \varphi^\prime, \, \psi \rangle}$ 
 defines a bounded linear functional on $\cM_A,$ and hence on  $\HE_A,$ of norm 
 at most $\|\varphi^\prime\|_B = [\varphi^\prime,\varphi^\prime]_B^{\frac12}.$
 By the Riesz Representation theorem, there is a vector $\widetilde{\pE} \varphi^\prime \in \HE_A$
 such that 
\begin{equation}
    \label{e:parrott:CS}
 \langle  \pE \varphi^\prime, \psi \rangle = [\widetilde{\pE} \varphi^\prime, \psi ]_A
\end{equation}
 for all $\psi \in \HE_A$ and $\|\widetilde{\pE} \varphi^\prime \|_A \le \|\varphi^\prime\|_B.$  
 Thus, we obtain a bounded linear map  $\widetilde{\pE}:\HE_B\to \HE_A$ 
 with norm at most one satisfying
\begin{equation}
    \label{d:tpE}
 [\widetilde{\pE} \varphi^\prime, \psi]_A = \langle \pE \varphi^\prime, \psi\rangle
\end{equation}
 for all $\psi \in \mathscr{F}$ and $\varphi^\prime\in\mathscr{F}^{N-1}.$ 
 A similar construction produces a (linear) contraction operator $\widetilde{\pF}:\HE_C\to \HE_B$
 such that
\begin{equation}
    \label{d:tpF}
 \langle \pF\psi, \varphi^\prime \rangle = [\widetilde{\pF}\psi, \varphi^\prime]_B.
\end{equation}

 Following  
  the argument in \cite{smith}, set  $\widetilde{X}=\widetilde{\pE}\widetilde{\pF}$
  and note, 
  since $\widetilde{\pF}$ and $\widetilde{\pE}$ are contractions,
\begin{equation}
    \label{e:parrott:R}
\widetilde{R}:= \begin{pmatrix} I & \widetilde{\pE} & \widetilde{X} \\ \widetilde{\pE}^* & I & \widetilde{\pF}\\
    \widetilde{X}^* & \widetilde{\pF}^* & I \end{pmatrix}
    = \begin{pmatrix}  \widetilde{\pE}\\ I \\ \widetilde{\pF}^* \end{pmatrix}
    \,  \begin{pmatrix}  \widetilde{\pE}^*  & I & \widetilde{\pF} \end{pmatrix}
     + \begin{pmatrix} I-\widetilde{\pE} \widetilde{\pE}^*  &0&0\\0&0&0\\0&0& I-\widetilde{\pF}^* \widetilde{\pF} \end{pmatrix}
      \succeq 0.
\end{equation} 

 Next observe, for  $\psi \in \mathscr{F}$ and $\varphi^\prime  \in \oplus_1^{N-1} \mathscr{F},$ that 
 for $\lralpha\in G,$ 
\[
 \begin{split}
[\widetilde{\pE} S^B_{\lralpha} \varphi^\prime, S^A_\lralpha \psi]_A
 & =  \langle \pE L_\lralpha  \varphi^\prime, L_\lralpha  \psi \rangle \\
  & = \langle \pE \varphi^\prime, \psi \rangle 
 =  [\widetilde{\pE} \varphi^\prime, \psi ]_A = [S_{\lralpha}^A \widetilde{E} \varphi', S_{\lralpha}^A \psi]_A,
  \end{split}
\]
 where  the first and third equalities result from the definition 
 of $\widetilde{\pE}$ in equation~\eqref{d:tpE}
  and the definitions of $S^A_{\lralpha}$ and $S^B_\lralpha$; 
  and the second equality follows from 
  equation~\eqref{e:invariant:forms1}.
 The surjectivity of $S^A_\alpha$ now gives
 $ \widetilde{\pE} S^B_\lralpha  =  S^A_\lralpha \widetilde{\pE}.$  A similar argument
 reveals that $\widetilde{\pF} S^C_\lralpha = S^B_\lralpha \widetilde{\pF}.$  Consequently,
$\widetilde{\pE}\widetilde{\pF} S^C_\lralpha = \widetilde{\pE} S_\lralpha^B \widetilde{\pF} = 
S^A_\lralpha \widetilde{\pE}\widetilde{\pF};$
  that is $ \widetilde{X} S^C_\lralpha= S^A_\lralpha \widetilde{X}.$ Thus, $(S_\lralpha^A)^* \widetilde{X} S_\lralpha^C = \widetilde{X}$.
  
  Define $p=p_{0,N}: G\to B(\HE)$ by 
\[
 \langle p(h) \lcal{w},\lcal{v}\rangle = [ \widetilde{X} (\mbo_h\otimes \lcal f), (\mbo_e\otimes \lcal e) ]_A,
\]
for $\lcal f,\lcal e\in \HE$
  and $h \in G,$ 
where
$e$ is the group identity and  $\mbo_e$ and $\mbo_h$ are the indicator functions of $\{e\}$ and $\{h\}$ respectively.
Let 
\[  
   X= \Upsilon_{0,N} = \Upsilon_{p}  = \begin{pmatrix} p(g^{-1} h)\end{pmatrix}_{g,h\in G}. 
\]
  Thus, $X$ is a  matrix indexed by $G\times G$ with $(h,g)$ entry  $p(g^{-1}h) \in B(\HE)$ that determines a form
 on $\mathscr{F}.$  We claim if  $\varphi, \psi \in \mathscr{F}$, then 
 $\langle X \varphi, \psi \rangle = [\widetilde{X}\varphi, \psi]_A.$ 
 To prove this claim, let $g,h\in G$ and $\lcal e,\lcal f\in\HE$ be given and 
 observe,
\[
 \begin{split}
 \langle  X  (\mbo_{h} \otimes \lcal f),(\mbo_g \otimes \lcal e)  \rangle  & =
    \langle X_{g,h} \lcal f, \lcal e \rangle 
     = \langle p(g^{-1} h)\lcal f, \lcal e \rangle  
 \\[5pt] &    = [\widetilde{X} (\mbo_{g^{-1}h} \otimes \lcal f), (\mbo_e \otimes \lcal e)]_A 
 \\[5pt]
  & = [ (S^A_{g})^* \widetilde{X} S^C_{g} (\mbo_{g^{-1}h} \otimes \lcal f), (\mbo_e \otimes \lcal e) ]_A
\\[5pt] &  = [ \widetilde{X} S^C_{g} (\mbo_{g^{-1}h} \otimes \lcal f),S^A_{g} (\mbo_e \otimes \lcal e) ]_A
\\[5pt] &  = [ \widetilde{X}  (\mbo_{h} \otimes \lcal f),(\mbo_g \otimes \lcal e) ]_A. 
\end{split}
\]
Because the set $\{\mbo_g \otimes \lcal f: g \in G, \, \lcal f\in \HE \}$ 
spans $\mathscr{F}$, the claim follows by linearity.

To prove  $\Upsilon=(\Upsilon_{j,k})_{j,k=0}^{j,k=N}$ is psd, 
observe,  if $\varphi_1,\varphi_2, \psi_1,  \psi_2 \in \mathscr{F}$ 
 and $\varphi^\prime, \psi^\prime \in \mathscr{F}^{N-1}$, 
 then 
\begin{equation}
\label{e:parrott:1}
 \langle \Upsilon[\varphi_1 \; \;  \varphi^\prime \; \; \varphi_2]^T,  
   [\psi_1 \; \;  \psi^\prime \; \; \psi_2]^T \rangle  = 
   \langle \widetilde{R}[\varphi_1 \; \;  \varphi^\prime \; \; \varphi_2]^T,  [\psi_1 \; \;  \psi^\prime  \; \; \psi_2]^T \rangle.
\end{equation}
 Since, by equation~\eqref{e:parrott:R}, the operator $\widetilde{R}$ is psd, equation~\eqref{e:parrott:1} 
 implies the form $\Upsilon$ is too. 
\end{proof}

\begin{remark}\rm 
   The pd assumption on $A,B,C$ can be relaxed to psd via the usual device of observing that,
   for instance, $[\cdot,\cdot]_A$ determines a psd form on $\mathscr{F}$ and constructing
   the Hilbert space $\HE_A$ from equivalence classes, modulo null vectors,  with representatives
   from $\mathscr{F}.$ 
\end{remark}

\begin{corollary}
\label{c:PsD-Parrott}
  Fix positive integers $M,N$ and let $E= \{(j,N),(N,j): -M\le j \le 0\}.$
  Suppose $p_{j,k}:G\to B(\HE)$ for  $(j,k)\in \{(j,k): -M\le j,k\le N\}\setminus E.$ 
  Let $\Upsilon_{j,k}$ denote the form determined by $p_{j,k}.$ If both 
  forms 
\[
 \begin{pmatrix} \Upsilon_{j,k} \end{pmatrix}_{j,k=-M}^{N-1},  \ \ \ 
 \begin{pmatrix} \Upsilon_{j,k} \end{pmatrix}_{j,k=1}^N
\]
 are psd, then there exists $p_{j,k}:G\to B(\HE)$ for $(j,k)\in E$ such that, defining
 $\Upsilon_{j,k}$ for $(j,k)\in E$ in the usual way, the form
\[
\begin{pmatrix} \Upsilon_{j,k} \end{pmatrix}_{j,k=-M}^N 
\]
 is psd.
\end{corollary}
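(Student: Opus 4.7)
The plan is to prove the corollary by induction on $i = 0, 1, \ldots, M$, filling in at the $i$-th stage the missing pair $(-i, N)$ and $(N, -i)$ by one application of Theorem~\ref{p:PsD-Parrot-for-forms}. More precisely, I aim to construct functions $p_{-i, N}: G \to B(\HE)$ (and set $p_{N, -i} := p_{-i, N}^*$) so that, with $\Upsilon_{-i, N} = \Upsilon_{p_{-i, N}}$ and $\Upsilon_{N, -i} = \Upsilon_{p_{N, -i}}$, the form
\[
\Xi^{(i)} := (\Upsilon_{j, k})_{j, k = -i}^{N}
\]
is psd. The conclusion of the corollary is then the psd-ness of $\Xi^{(M)}$.

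For the base case $i = 0$, apply Theorem~\ref{p:PsD-Parrot-for-forms} to the array indexed by $\{0, 1, \ldots, N\}$, taking $A = \Upsilon_{0, 0}$, $B = (\Upsilon_{j, k})_{j, k = 1}^{N - 1}$, $C = \Upsilon_{N, N}$, and $E, F$ the corresponding off-diagonal blocks. The two psd hypotheses of the theorem are $P = (\Upsilon_{j, k})_{j, k = 0}^{N - 1}$, a principal submatrix of the given-psd $(\Upsilon_{j, k})_{j, k = -M}^{N - 1}$, and $Q = (\Upsilon_{j, k})_{j, k = 1}^{N}$, which is the second given-psd form. The theorem produces $p_{0, N}$ making $\Xi^{(0)}$ psd.

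For the inductive step, assume $\Xi^{(i - 1)}$ is psd. Apply Theorem~\ref{p:PsD-Parrot-for-forms} to the indices $\{-i, -i + 1, \ldots, N\}$, identifying $-i$ with the ``$0$'' and $N$ with the ``$N$'' of the theorem, and taking $A = \Upsilon_{-i, -i}$, $B = (\Upsilon_{j, k})_{j, k = -i + 1}^{N - 1}$, $C = \Upsilon_{N, N}$. The required $P = (\Upsilon_{j, k})_{j, k = -i}^{N - 1}$ is again a principal submatrix of the given-psd $(\Upsilon_{j, k})_{j, k = -M}^{N - 1}$; the required $Q = (\Upsilon_{j, k})_{j, k = -i + 1}^{N}$ equals $\Xi^{(i - 1)}$ and is psd by the inductive hypothesis. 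The theorem then supplies $p_{-i, N}$ with $\Xi^{(i)}$ psd, completing the induction.

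The pd hypothesis on $A, B, C$ in Theorem~\ref{p:PsD-Parrot-for-forms} is relaxed to psd by the remark immediately following its proof; at each step the diagonal blocks used are psd simply as principal submatrices of psd forms. I do not expect a substantive obstacle: the only care needed is to propagate psd through the iteration — trivial for $P$ (principal submatrix of a fixed psd form) and supplied by the previous stage for $Q$ — so the entire argument reduces to careful bookkeeping around iterated applications of the Parrott theorem.
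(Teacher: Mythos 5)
Your proposal is correct and follows essentially the same route as the paper: one application of Theorem~\ref{p:PsD-Parrot-for-forms} on the index set $\{0,\dots,N\}$ (using a principal submatrix of the given form $(\Upsilon_{j,k})_{j,k=-M}^{N-1}$ together with $(\Upsilon_{j,k})_{j,k=1}^{N}$), followed by downward induction on $-i$, with the pd hypotheses relaxed to psd via the remark after the theorem. The paper's proof simply states "now induct," and your write-up supplies exactly the bookkeeping that induction requires.
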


\begin{proof}
 Applying Theorem~\ref{p:PsD-Parrot-for-forms},  to the data $\Upsilon_{j,k}$ 
 for $(j,k)\in \{(j,k): 0\le j,k\le N\}\setminus\{(0,N),(N,0)\},$ it follows 
 that there exists a form $\Upsilon_{0,N}$ on $G$ such that, with $\Upsilon_{N,0}=\Upsilon_{0,N}^*,$  the form
 $(\Upsilon_{j,k})_{j,k=0}^N$ on $\oplus_{0}^N \mathscr{F}$  is psd. 
 Now induct.
\end{proof}

Corollary~\ref{c:PsD-Parrott} has a convenient  restatement purely in terms of functions from $G$ to  $B(\HE)$ as follows.
Given a finite set $F$ let $M_F(B(\HE))$ denote the matrices indexed by $F\times F$
with entries from $B(\HE).$ Given  functions $p_{s,t}:G\to B(\HE)$ for $s,t\in F,$
the map $p:G\to M_F(B(\HE))$ defined by
\[
   p(g) =\begin{pmatrix} p_{s,t}(g) \end{pmatrix}_{s,t\in F}
\]
 is \df{psd} if, for all  
 $\varphi:F\to \mathscr{F},$ 
\[
 \sum_{g,h\in G} \langle p(g^{-1}h) \varphi_h,\varphi_g\rangle 
 := \sum_{g,h\in G} \, \sum_{s,t\in F} \langle p_{s,t}(g^{-1} h) \varphi_h(t),\varphi_g(s) \rangle \ge 0,
\]
where $\varphi_h: F \to \HE$ is defined by $\varphi_h(s) = \varphi(s)(h)$ for each $s \in F$.
 The \df{adjoint} of $p:G\to M_F(B(\HE)),$ denoted $p^*,$ is the 
 function $p^*:G\to M_F(B(\HE))$ defined by
 $p^*(g) =p(g^{-1})^*.$  In particular, given %
 $\varphi,\psi:F\to \mathscr{F},$ 
\[
 \langle \varphi_h, p^*(h^{-1}g) \psi_g \rangle 
  = \langle \varphi_h, p(g^{-1}h)^* \psi_g \rangle 
  = \langle p(g^{-1}h) \varphi_h,\psi_g \rangle,
\]
for $g,h\in G.$

\begin{corollary}
\label{c:psd-Parrott2}
 Let $F$ denote a finite set,  fix $s_0\in F$ and a proper subset $F_0\subseteq F {\setminus \{ s_0 \}}$
 and suppose $p_{s,t}:G\to B(\HE)$ for 
\[
  (s,t)\in F\times F \setminus \big (  (\{s_0\}\times F_0) \, \cup \,  (F_0\times \{s_0\}) \big )
\]
are given. If both 
\[
 \begin{pmatrix} p_{s,t} \end{pmatrix}_{s,t\in F\setminus \{s_0\}}, \ \ \ 
 \begin{pmatrix} p_{s,t} \end{pmatrix}_{s,t\in F\setminus F_0}
\]
 are psd, then there exists $p_{s_0,t}:G\to B(\HE)$ {for $t \in F_0$} such that, with $p_{t,s_0}= p_{s_0,t}^*,$ 
\[
 \begin{pmatrix} p_{s,t} \end{pmatrix}_{s,t\in F}
\]
 is psd.
\end{corollary}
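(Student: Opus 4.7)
The plan is to recognize Corollary~\ref{c:psd-Parrott2} as a direct reformulation of Corollary~\ref{c:PsD-Parrott} obtained by relabeling the finite index set $F$ by integers. Once the correspondence is set up, the conclusion follows immediately; the only genuine content is the bookkeeping that matches the matrix-valued psd condition introduced in the paragraph preceding the statement with the sesquilinear-form psd condition used earlier in the section.

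First, I would decompose $F$ as the disjoint union $F = F_0 \sqcup F_1 \sqcup \{s_0\}$, where $F_1 := F \setminus (F_0 \cup \{s_0\})$. Since $F_0$ is a proper subset of $F\setminus\{s_0\}$, the set $F_1$ is nonempty; and if $F_0 = \emptyset$ the claim is vacuous, so I may assume $|F_0| \ge 1$. Set $M+1 := |F_0|$ and $N-1 := |F_1|$, and fix bijections identifying $F_0$ with $\{-M,\ldots,0\}$, $F_1$ with $\{1,\ldots,N-1\}$, and $s_0$ with $N$. Under this relabeling, the unknown entries $(\{s_0\}\times F_0) \cup (F_0\times\{s_0\})$ correspond precisely to the set $E = \{(j,N),(N,j): -M \le j \le 0\}$ appearing in Corollary~\ref{c:PsD-Parrott}. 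Likewise $F\setminus\{s_0\}$ maps to $\{-M,\ldots,N-1\}$ and $F\setminus F_0$ maps to $\{1,\ldots,N\}$, so the two hypothesized psd sub-blocks of Corollary~\ref{c:psd-Parrott2} are precisely the two psd sub-blocks of Corollary~\ref{c:PsD-Parrott}.

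Next, I would check that the matrix-valued psd condition stated just before the corollary agrees with the form psd condition of Corollary~\ref{c:PsD-Parrott}. For any subset $F' \subseteq F$, a function $\varphi : F' \to \mathscr{F}$ can be identified with a vector $\bigoplus_{s \in F'} \varphi(s) \in \mathscr{F}^{|F'|}$, and under this identification the double sum
\[
\sum_{g,h \in G}\sum_{s,t \in F'} \langle p_{s,t}(g^{-1}h)\varphi_h(t), \varphi_g(s)\rangle
\]
is exactly the value of the form $(\Upsilon_{p_{s,t}})_{s,t \in F'}$ on $\bigoplus_{s} \varphi(s)$. Hence the two notions of psd coincide, and the hypotheses of Corollary~\ref{c:PsD-Parrott} are in force after relabeling.

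With this dictionary in hand, applying Corollary~\ref{c:PsD-Parrott} produces functions $p_{j,N} : G \to B(\HE)$ for $-M \le j \le 0$ making the full form on $\mathscr{F}^{M+N+1}$ psd. Reading these back through the inverse relabeling yields the sought $p_{t,s_0} : G \to B(\HE)$ for $t \in F_0$, with $p_{s_0,t} := p_{t,s_0}^*$, and the same identification converts psd-ness of the completed form into psd-ness of $(p_{s,t})_{s,t\in F}$. I do not expect any genuine obstacle; the sole subtlety is the translation of the psd condition between block-matrix and vector form, which is routine.
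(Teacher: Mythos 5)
Your proposal is correct and follows exactly the route the paper intends: Corollary~\ref{c:psd-Parrott2} is presented there as a relabeled restatement of Corollary~\ref{c:PsD-Parrott}, and your bijection sending $F_0\to\{-M,\dots,0\}$, $F\setminus(F_0\cup\{s_0\})\to\{1,\dots,N-1\}$, $s_0\mapsto N$, together with the check that the block-matrix psd condition coincides with the sesquilinear-form psd condition, is precisely that translation. The only wrinkle is the case $|F_0|=1$, which forces $M=0$ while Corollary~\ref{c:PsD-Parrott} is stated for positive $M$; there you simply invoke Theorem~\ref{p:PsD-Parrot-for-forms} directly, which is the base case of that corollary's induction.
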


In applying Theorem~\ref{p:PsD-Parrot-for-forms} and its corollaries, one is not presented
with the matrices $\Upsilon$ directly, but instead  matrices $\Gamma$ indexed by a monoid $M\subseteq G$
with the property $G=\Her M.$ Thus $\Gamma$ is indexed by $M\times M$ and the $(g,h)$ entry of
$\Gamma$ depends only upon $g^{-1}h.$  In this case, there exists a function $p:\Her M\to  B(\HE)$
such that 
\begin{equation}
    \label{e:fromMtoG}
 \Gamma = \Gamma_p := \begin{pmatrix} p(h^{-1}g)\end{pmatrix}_{g,h\in M}.
\end{equation}

\begin{corollary}
\label{c:Gpsd-parrott}
Let $M$ be a monoid and assume that $G = \Her M = \{u^{-1}v : u,v\in M\}$. 
With the hypotheses of the preamble of Corollary~\ref{c:PsD-Parrott}, 
if both forms
\[
 A=  \begin{pmatrix} \Gamma_{j,k} \end{pmatrix}_{j,k=-M}^{N-1},  \ \ \ 
 B=  \begin{pmatrix} \Gamma_{j,k} \end{pmatrix}_{j,k=1}^N
\]
 are psd, then there exists $p_{j,k}:G\to B(\HE)$ for $(j,k)\in E$ such that, defining
 $\Gamma_{j,k}$ for $(j,k)\in E$ in the usual way, the form
\[
\begin{pmatrix} \Gamma_{j,k} \end{pmatrix}_{j,k=-M}^N 
\]
 is psd.
\end{corollary}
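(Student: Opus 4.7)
The plan is to reduce Corollary~\ref{c:Gpsd-parrott} to Corollary~\ref{c:PsD-Parrott}. The point is to show that, for any function $p\colon G\to \cB(\HE)$, the psd-ness of the $M$-indexed form $\Gamma_p$ on $C_{00}(M,\HE)$ is equivalent to the psd-ness of the $G$-indexed form $\Upsilon_p$ on $\mathscr{F}=C_{00}(G,\HE)$. Once this is known, the hypotheses of Corollary~\ref{c:Gpsd-parrott} on the $\Gamma$-blocks $A$ and $B$ translate, by applying the equivalence to the block-valued function $g\mapsto(p_{j,k}(g))_{j,k}$, into the hypotheses of Corollary~\ref{c:PsD-Parrott} on the corresponding $\Upsilon$-blocks. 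Invoking Corollary~\ref{c:PsD-Parrott} then produces the desired functions $p_{j,k}\colon G\to \cB(\HE)$ for $(j,k)\in E$; restricting back to $M$-indexed vectors yields that $(\Gamma_{j,k})_{j,k=-M}^N$ is psd, as required.

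The crucial observation is that the hypothesis $G=\Her M$ forces $M$ to satisfy the left Ore condition. Indeed, since $\Her M=M^{-1}M$ is the group $G$, it is closed under the group product; applied to pairs $u,v\in M$ it gives $vu^{-1}\in G=M^{-1}M$, so $vu^{-1}=a^{-1}b$ for some $a,b\in M$, which rearranges to $av=bu$. By induction, any finite collection $\{u_1,\dots,u_n\}\subset M$ admits a common left multiple, i.e.\ $\bigcap_{i=1}^n Mu_i\ne\emptyset$: given $w\in \bigcap_{i<n} Mu_i$, Ore applied to $w$ and $u_n$ produces $s,t\in M$ with $sw=tu_n$, so $sw\in Mw\cap Mu_n\subseteq \bigcap_{i\le n}Mu_i$.

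With this in hand, the equivalence follows from translation invariance. The direction $\Upsilon_p\succeq 0\Rightarrow \Gamma_p\succeq 0$ is immediate from $C_{00}(M,\HE)\subseteq \mathscr{F}$. For the converse, take $\varphi\in\mathscr{F}$ with finite support $F=\{f_1,\dots,f_n\}\subset G$. Writing each $f_i=u_i^{-1}v_i$ with $u_i,v_i\in M$ and choosing $\alpha\in\bigcap_{i=1}^n Mu_i$, one has $\alpha u_i^{-1}\in M$, hence $\alpha f_i=(\alpha u_i^{-1})v_i\in M$ for every $i$. Thus $L_\alpha\varphi$ is supported in $M$, so $L_\alpha\varphi\in C_{00}(M,\HE)$, and the translation-invariance identity~\eqref{e:YpLx}, applied with the group element $\alpha\in M\subseteq G$, gives
\[
\langle\Upsilon_p\varphi,\varphi\rangle
=\langle\Upsilon_p L_\alpha\varphi,L_\alpha\varphi\rangle
=\langle\Gamma_p(L_\alpha\varphi),L_\alpha\varphi\rangle\ \ge\ 0.
\]

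The main obstacle is the Ore/common-multiple step just described: it is the place where the structural hypothesis $G=\Her M$ is genuinely used, and once it is handled the remainder of the proof is a mechanical translation argument feeding into Corollary~\ref{c:PsD-Parrott}.
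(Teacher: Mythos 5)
Your proposal is correct and takes essentially the same route as the paper: your Ore/common-left-multiple step is exactly the content of Lemma~\ref{l:MonoidG} (which the paper proves by the same induction, phrased as finding $d\in M$ with $dS\subseteq M$), and your translation-invariance argument moving the support into $M$ is precisely Lemma~\ref{l:MonoidG+}. After that, both proofs feed the resulting psd $\Upsilon$-blocks into Corollary~\ref{c:PsD-Parrott} and conclude by restricting back to $M$-indexed vectors.
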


The proof of Corollary~\ref{c:Gpsd-parrott} employs Lemma~\ref{l:MonoidG+},
which in turn relies on Lemma~\ref{l:MonoidG} below. Lemma~\ref{l:MonoidG}, 
while likely known as it is very much in line with left Ore quotient construction in 
non-commutative ring theory,  is natural from the point of view of semigroups.

\begin{lemma}
\label{l:MonoidG}
 If ${M}\subseteq G$ is a monoid and $G=\Her {M},$ then for each finite subset $S$ of $G,$ there exists
 a $d\in {M}$ such that $dS=\{ds: s\in S\}\subseteq {M}.$
\end{lemma}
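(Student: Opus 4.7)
The plan is to proceed by induction on the cardinality of $S$, exploiting the defining property $G = \Her M$ together with the closure of the submonoid $M$ under multiplication. The base case $|S| = 0$ is trivial (take $d = e$), and the case $|S| = 1$ is immediate: for $S = \{s\}$, write $s = u^{-1}v$ with $u, v \in M$, then $d := u \in M$ satisfies $ds = v \in M$.

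For the inductive step, suppose the lemma holds for all subsets of size $n$ and let $S = S' \cup \{s\}$ with $|S'| = n$. By induction there exists $d' \in M$ with $d' S' \subseteq M$. Now apply the hypothesis $G = \Her M$ to the single element $d' s \in G$ to produce $u, v \in M$ with $d' s = u^{-1} v$, i.e.\ $(u d') s = v \in M$. Setting $d := u d'$, closure of $M$ under the group product gives $d \in M$, and then $d \cdot S \subseteq M$: the element $s$ is handled by construction, while for $s' \in S'$ we have $d s' = u (d' s') \in u M \subseteq M$, again using that $M$ is closed under left multiplication by elements of $M$.

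The only real content beyond bookkeeping is the observation used in the inductive step: once a common left multiplier $d'$ has been found for $S'$, any further left multiplication by an element of $M$ preserves the property $d' S' \subseteq M$. This is what makes the left Ore–type construction work here, and it is precisely where the submonoid structure (rather than merely a generating set) is used. I do not anticipate a genuine obstacle; the argument is a clean induction, and the statement and proof are essentially the left Ore quotient presentation of $G$ translated into the language of semigroups.
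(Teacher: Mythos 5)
Your proof is correct and takes essentially the same route as the paper: induct on $|S|$, fix a common left multiplier $d'\in M$ for the smaller set, and then use $G=\Her M$ to adjust by one further left factor from $M$, relying on closure of $M$ under multiplication to preserve $d'S'\subseteq M$. The only immaterial difference is that you apply $G=\Her M$ directly to the element $d's$, whereas the paper writes $s=u^{-1}v$ and applies it to $d'u^{-1}$; the resulting adjustment is the same.
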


\begin{proof}
 We argue by induction.
 The case where the cardinality of $S$ is one is evident.  Now suppose $m$ is a positive 
 integer and the result holds for all $S$ with $|S|=m.$ Fix an $S=\{s_1,\dots,s_m\}$
 and let $s=u^{-1}v\in G=\Her {M}$ be given. Set $S^\prime =S\cup\{s\}.$
 By assumption, there exists a $d\in {M}$ such that $T=dS =\{t_1,\dots,t_m\}\subseteq {M}.$
 Since $G=\Her {M}$ there exits $a,b\in {M}$ such that $d u^{-1} = a^{-1} b.$  Set $e = ad$ and 
 observe,
\[
 e s = ad s = ad u^{-1}v = b v\in {M}.
\]
Hence $eS^\prime =eS \cup\{es\} \subseteq {M}$ and the proof is complete.
\end{proof}

The statement of Lemma~\ref{l:MonoidG+} below uses the notation of equation~\eqref{e:fromMtoG}.

\begin{lemma}
\label{l:MonoidG+}
   Let $M$ be a monoid and assume that $G = \Her M$.
   If $N$ is a positive integer,  $p_{j,k}: G\to  B(\HE)$ for $1\le j,k\le N$ 
   and 
\[
   \Delta = \begin{pmatrix} \Gamma_{p_{j,k}} \end{pmatrix}_{j,k=1}^N
\]
 is psd, then so is
\[
 \Upsilon = \begin{pmatrix} \Upsilon_{p_{j,k}} \end{pmatrix}_{j,k=1}^N,
\]
where
\[
 \Upsilon_p = \begin{pmatrix} p(g^{-1}h) \end{pmatrix}_{g,h\in G}.
\]
\end{lemma}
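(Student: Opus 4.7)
The plan is to reduce the positivity of $\Upsilon$ acting on finitely supported functions on $G$ to the positivity of $\Delta$ on $M$ by means of a uniform left translation. The two ingredients we need are the translation invariance of each form $\Upsilon_{p}$ (already observed in \eqref{e:YpLx}) and the fact, supplied by Lemma~\ref{l:MonoidG}, that any finite subset of $G$ can be moved into $M$ by a single left multiplier from $M$.

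First, I would record the block analogue of \eqref{e:YpLx}: for any $d \in G$ and any $\varphi, \psi \in \mathscr{F}$, the change of variables $g \mapsto dg,\, h \mapsto dh$ in the defining sum gives $\langle \Upsilon_{p_{j,k}} L_d \varphi, L_d \psi\rangle = \langle \Upsilon_{p_{j,k}} \varphi, \psi\rangle$ for every $j,k$. Summing over $j,k$ shows $\Upsilon$ is invariant under $I_N \otimes L_d$ in the sense that $\langle \Upsilon (L_d \otimes I_N)\varphi, (L_d \otimes I_N)\psi \rangle = \langle \Upsilon \varphi, \psi\rangle$ for all $\varphi, \psi \in \mathscr{F}^N$.

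Next, fix an arbitrary $\varphi = (\varphi_1,\ldots,\varphi_N) \in \mathscr{F}^N$ and let $S = \bigcup_{j=1}^N \operatorname{supp}(\varphi_j)$, which is finite. By Lemma~\ref{l:MonoidG}, there exists $d \in M$ with $dS \subseteq M$. Setting $\psi_j := L_d \varphi_j$, each $\psi_j$ is supported in $M$. Translation invariance then yields
\[
\langle \Upsilon \varphi, \varphi \rangle \;=\; \sum_{j,k=1}^N \langle \Upsilon_{p_{j,k}} \varphi_k, \varphi_j\rangle \;=\; \sum_{j,k=1}^N \langle \Upsilon_{p_{j,k}} \psi_k, \psi_j\rangle.
\]

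Finally, because every $\psi_j$ is supported in $M$, the defining sum for $\langle \Upsilon_{p_{j,k}} \psi_k, \psi_j\rangle = \sum_{g,h \in G} \langle p_{j,k}(g^{-1}h)\psi_k(h), \psi_j(g)\rangle$ collapses to the sum over $g,h \in M$, which is precisely $\langle \Gamma_{p_{j,k}} \psi_k|_M, \psi_j|_M\rangle$ in the form sense used for $\Delta$. Writing $\widetilde{\psi} = (\psi_1|_M, \ldots, \psi_N|_M)$, we conclude
\[
\langle \Upsilon \varphi, \varphi\rangle \;=\; \langle \Delta \widetilde{\psi}, \widetilde{\psi}\rangle \;\ge\; 0,
\]
since $\Delta$ is psd by hypothesis. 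As $\varphi$ was arbitrary, $\Upsilon \succeq 0$. The only nontrivial ingredient is the simultaneous translation into $M$, which is already handled by Lemma~\ref{l:MonoidG}; everything else is a direct change-of-variables computation, so I do not anticipate any serious obstruction.
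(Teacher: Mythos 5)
Your proposal is correct and follows essentially the same route as the paper: both use Lemma~\ref{l:MonoidG} to translate the finite union of supports into $M$ by a single left multiplier $d$, and then the same change-of-variables computation (your appeal to the invariance \eqref{e:YpLx} is just the paper's direct substitution $g\mapsto dg$, $h\mapsto dh$ phrased differently) reduces $\langle \Upsilon\varphi,\varphi\rangle$ to $\langle \Delta\widetilde\psi,\widetilde\psi\rangle\ge 0$.
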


\begin{proof}
  Let $\varphi = \oplus_1^{N} \varphi_j \in \mathscr{F}^N$ be given,
  where $\mathscr{F}$ is defined in equation~\eqref{eq:funnyF}.
 Let $S = \bigcup_{1 \leq j \leq N} S_j$ where $S_j = \text{support}(\varphi_j)$.
 By Lemma~\ref{l:MonoidG}, there exists a $d\in {M}$ such $T=dS\subseteq {M}.$ 
 Define $\psi_j:M\to \cB(\HE)$ by $\psi_j(g)= \varphi_j(d^{-1}g).$  Thus $\psi_j$ is supported in $T$ and
 \begin{equation*}
\begin{split}
\langle \Upsilon \varphi,\varphi\rangle 
&= \sum_{j,k=1}^{N}  \sum_{g,h\in S} \blangle p_{j,k}(g^{-1}h) \varphi_k(h), \,  \varphi_j(g) \brangle
\\[3pt]  & = \sum_{j,k=1}^{N}  \sum_{g,h\in S} \blangle p_{j,k}((dg)^{-1} (dh)) \psi_k(dh), \,  \psi_j(dg) \brangle
\\[3pt]  & = 
\sum_{j,k=1}^{N}  \sum_{r,s\in  T} \blangle p_{j,k}(r^{-1} s)\psi_k(s), \,  \psi_j(r) \brangle
\\[3pt] & = \langle \Delta \psi,\psi\rangle \succeq 0,
\end{split}
\end{equation*}
where $\psi =\oplus \psi_j.$ 
\end{proof}

\begin{proof}[Proof of Corollary~\ref{c:Gpsd-parrott}]
 Apply Lemma~\ref{l:MonoidG+} to both $A$ and $B$ to conclude
 their $\Upsilon$ counterparts in Corollary~\ref{c:PsD-Parrott} 
 are psd. Hence by that corollary $(\Upsilon_{j,k})_{j,l=-M}^N$ is psd
 and therefore so is $(\Gamma_{j,k})_{j,k=-M}^N$ as claimed.
\end{proof}

 We conclude this section with the following variation on Corollary~\ref{c:psd-Parrott2}. 

\begin{corollary} 
\label{psd-parrotMonoid}
 Suppose  $M$ is a monoid and $G = \Her M = \{u^{-1}v : u,v\in M\}$. 
 Let $F$ denote a finite set,  fix $s_0\in F$ and a proper subset $F_0\subseteq F {\setminus \{ s_0 \}}$
 and suppose $p_{s,t}:G\to B(\HE)$ for 
\[
  (s,t)\in F\times F \setminus \big (  (\{s_0\}\times F_0) \, \cup \,  (F_0\times \{s_0\}) \big )
\]
are given. If both 
\[
 \begin{pmatrix} p_{s,t} \end{pmatrix}_{s,t\in F\setminus \{s_0\}}, \ \ \ 
 \begin{pmatrix} p_{s,t} \end{pmatrix}_{s,t\in F\setminus F_0}
\]
 are psd, then there exists $p_{s_0,t}:G\to B(\HE)$ {for $t \in F_0$} such that, with $p_{t,s_0}= p_{s_0,t}^*,$ 
\[
 \begin{pmatrix} p_{s,t} \end{pmatrix}_{s,t\in F}
\]
 is psd.
\end{corollary}

\newcommand{\HerIw}{\Her \groupWw}
\newcommand{\HerIs}{\Her \groupW_{\le \Ws}}
\newcommand{\FK}{F_K}
\newcommand{\FE}{F_{\HE}}
\newcommand{\groupYG}{G}

\section{Matrix completions}
\label{s:freezedtwo}

The positive semidefinite (psd) matrix completion theorems for partially defined forms over $\Her\groupWw$,
building on the psd Parrott theorem, Theorem~\ref{p:PsD-Parrot-for-forms} from Section~\ref{sec:parrott},
are formulated and proved in this section. 
These completion results provide the combinatorial input for the representation theorem in Section~\ref{sec:psd_functions_on_groups_arise_from_unitary_representations} and, ultimately, for the proof of the Fej\'er--Riesz factorization in Section~\ref{sec:proofs}.

Let $\groupYG$ be a group and let \df{$F(\groupYG,B(\HE))$} denote the set of functions $\vartheta:\groupYG\to B(\HE).$
  We abbreviate to \df{$\FE(\groupYG)$} when confusion is unlikely.  Recall, the function $\vartheta$ 
is identified with the matrix, or form, with entries from $B(\HE)$, \index{$ \Upsilon_\vartheta$}
\[
 \Upsilon_\vartheta = \begin{pmatrix} \vartheta( g^{-1} h)  \end{pmatrix}_{g,h\in \groupYG}.
\]

Also recall $\groupW$ is either the free semigroup $\llx_\vg$ or the free product $\Ztg$
endowed  with the shortlex order.
Given $\Ww\in \groupW,$ 
a \df{partially defined form}, or \df{$w$-partially defined form}, 
over $\HerIw$ with values  in $\FE(\groupYG)$ is a function
$\chi: \HerIw \to \FE(\groupYG).$ The function $\chi$ is identified with the block  matrix $X_\chi$\index{$X_\chi$}
indexed by $\groupWw \times \groupWw,$ 
\[
 X_\chi = \begin{pmatrix}  \Upsilon_{\chi(\Wu^{-1}\Wv)} \end{pmatrix}_{\Wu,\Wv\in \groupWw} 
 \sim \begin{pmatrix}  \chi(\Wu^{-1}\Wv) \end{pmatrix}_{\Wu,\Wv\in \groupWw},
\]
where $\sim$ denotes the natural entrywise identification.
For notational convenience, let
\[
\chi(g^{-1}h;\Wu^{-1}\Wv) = \chi(\Wu^{-1}\Wv)(g^{-1}h),
\]
 for $g,h\in G$ and $\Wu,\Wv\in \groupWw.$
Let \df{$C_{00}(\groupYG,\HE)$} denote the space of functions $f:G\to \HE$ of finite support. 
With this notation, $\chi$ is  \df{positive semidefinite (psd)} \index{psd} if the matrix $X_\chi$ is
psd; that is, for each
  collection $\{f_\Wv \in C_{00}(\groupYG,\HE) \mid \Wv\in \groupWw\},$ 
\[
 \sum_{\Wu,\Wv\in \groupWw} \sum_{g,h\in \groupYG} \langle  \chi(g^{-1}h;\Wu^{-1}\Wv)  f_\Wv(h), f_\Wu(g) \rangle \ge 0.
\]
 Likewise, a function $\widetilde{\chi}:\Her \groupW \to \FE(\groupYG)$ is psd if, for each $\Wz\in\groupW$ and
 each collection $\{f_\Wv \in C_{00}(\groupYG,\HE) \mid \Wv\in \groupW_{\le \Wz}\},$
\[
 \sum_{\Wu,\Wv\in \groupW_{\le\Wz}} \sum_{g,h\in \groupYG} \langle  \chi(g^{-1}h;\Wu^{-1}\Wv)  f_\Wv(h), f_\Wu(g) \rangle \ge 0.
\]

 The following theorem solves a completion problem for the case of the free semigroup $\llx_\vg.$

\begin{theorem}
\label{l:compHX}\index{matrix completion, free semigroup case}
    In the case of the free semigroup $\llx_\vg$,  
    with notations as above and  letting $\Ws$ denote the immediate
    successor to $\Ww,$  if $\chi,$ a $\Ww$-partially defined form, is psd, then  
     $\chi$ extends to a psd function
    $\widetilde{\chi}:\HerIs \to \FE(\groupYG).$ %
\end{theorem}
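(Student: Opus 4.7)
The plan is to apply the psd Parrott theorem (Corollary \ref{c:psd-Parrott2}) on the finite index set $F = \groupW_{\le \Ws}$ to fill in the single new row and column at $\Ws$. The structural fact about $\llx_\vg$ with shortlex that drives the argument is that the immediate successor satisfies $\Ws = x_j \Ws'$ for some generator $x_j$ and some $\Ws' \in \groupWw$. Indeed: if $\Ww = e$, then $\Ws = x_1$ and $\Ws' = e$; if $|\Ww| = k \geq 1$ and $\Ww \ne x_\vg^k$, then $|\Ws| = k$ so $|\Ws'| = k-1$ places $\Ws'$ strictly before $\Ww$ in shortlex; and if $\Ww = x_\vg^k$, then $\Ws = x_1^{k+1}$ and $\Ws' = x_1^k \le x_\vg^k = \Ww$.

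Fix such a factorization and split $\groupWw = F_1 \sqcup F_0$, where $F_1 = \{\Wv \in \groupWw : \Wv \text{ starts with } x_j\}$ and $F_0$ is its complement. For $\Wv = x_j \Wv' \in F_1$, cancellation yields $\Ws^{-1}\Wv = (\Ws')^{-1}\Wv'$ with $\Ws', \Wv' \in \groupWw$, so $\chi$ already prescribes a value; similarly $\Ws^{-1}\Ws = e \in \Her \groupWw$. Thus the only genuinely free entries to fill in are those at $(\Ws, t)$ for $t \in F_0$ together with their adjoints. The plan is then to invoke Corollary \ref{c:psd-Parrott2} with $s_0 = \Ws$, ambient group $G = \groupYG$, and this $F_0$; it yields functions $p_{\Ws,t}:\groupYG \to B(\HE)$ for $t \in F_0$ making the resulting block form on $F \times F$ psd. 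Setting $\widetilde{\chi}(\Ws^{-1}t) := p_{\Ws,t}$ and $\widetilde{\chi}(t^{-1}\Ws) := p_{\Ws,t}^{*}$ for $t \in F_0$, and $\widetilde{\chi} := \chi$ on $\Her \groupWw$, produces the desired extension.

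Two psd hypotheses of Corollary \ref{c:psd-Parrott2} must be checked. Hypothesis (i), that the form on $F \setminus \{s_0\} = \groupWw$ is psd, is the standing assumption on $\chi$. Hypothesis (ii) concerns the form on $F \setminus F_0 = \{\Ws\} \cup F_1$: every index here is of the form $x_j \Wu'$, and the cancellation $(x_j \Wu')^{-1}(x_j \Wv') = (\Wu')^{-1}\Wv'$ identifies this block, via the bijection $x_j \Wu' \leftrightarrow \Wu'$, with the principal submatrix of $X_\chi$ indexed by $\{\Wu' : x_j \Wu' \le \Ws\} = \groupW_{\le \Ws'} \subseteq \groupWw$, which is psd as a principal submatrix of a psd form. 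The main subtlety is the well-definedness check for $\widetilde{\chi}$: I would verify that for $t \in F_0$ the reduced form $(\Ws')^{-1}x_j^{-1}t$ of $\Ws^{-1}t$ cannot equal $\Wu^{-1}\Wv$ with $\Wu,\Wv \in \groupWw$, since matching reduced forms would force $\Wu = p\Ws$ for some $p \in \llx_\vg$, and then $|p\Ws| \ge |\Ws| > |\Ww|$ (or $p = e$ and $\Wu = \Ws$) puts $\Wu$ outside $\groupWw$. Both the shift argument for hypothesis (ii) and this consistency check rest on the absence of non-trivial cancellation in $\llx_\vg$; once they are in place, psd-ness of $X_{\widetilde{\chi}}$ is exactly the conclusion of Parrott's theorem.
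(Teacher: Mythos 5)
Your proposal is correct and follows essentially the same route as the paper: split $\groupW_{\le \Ws}$ according to whether a word begins with the first letter $x_j$ of $\Ws$, verify the second Parrott hypothesis by left-dividing that block by $x_j$ so it becomes a principal submatrix of $X_\chi$ indexed inside $\groupWw$, and then complete via the psd Parrott corollary at $s_0=\Ws$ (the paper cites the monoid variant, Corollary~\ref{psd-parrotMonoid}, rather than Corollary~\ref{c:psd-Parrott2}, an immaterial difference here), with the same consistency observations about the new entries. One tiny repair: in your well-definedness check the chain $|p\Ws|\ge |\Ws|>|\Ww|$ fails when $|\Ws|=|\Ww|$; for $p\ne e$ argue instead $|p\Ws|>|\Ws|\ge |\Ww|$, which together with the case $p=e$, $\Wu=\Ws>\Ww$, gives the desired contradiction.
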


\begin{proof}
By definition and the assumption that $\chi$ is psd,
\[
0 \preceq X_\chi =\begin{pmatrix}  \chi(\Wu^{-1}\Wv) \end{pmatrix}_{\Wu,\Wv\in \groupWw} = \begin{pmatrix}  \chi(\Wu^{-1}\Wv) \end{pmatrix}_{\Wu,\Wv\in \groupW_{\le \Ws} \setminus \{ \Ws \} }.
\]
 
Let $F_0 =  \{  \Wv \in \groupW_{\le \Ws} \mid s_{1}^{-1} \Wv \not \in \groupWw \},$ where 
$s_{1}$ is the first letter of $\Ws$. Thus 
 $F_0$ consists only of the empty word and the elements of $\groupWw$ whose first letter is not  $s_{1}.$ 
 Equivalently,  $\groupWw \setminus F_0$ consists of those words
in $\groupWw$ whose first letter is $s_1.$ Moreover, if $\Wt\in F_0,$ then  $\Ws^{-1}\Wt \ne \Wt^{-1}\Ws,$
because both $\Ws^{-1}\Wt$ and  $\Wt^{-1}\Ws$ are in reduced form (no cancellation) and $\Wt\ne \Ws.$

Let $s_{1}^{-1} (\groupW_{\le \Ws} \setminus F_0) = \{  s_{1}^{-1}\Wu  \mid \Wu \in (\groupW_{\le \Ws} \setminus F_0)  \}$. 
Because $s_1^{-1}(\groupW_{\le \Ws} \setminus F_0) \subseteq \groupWw$,  by assumption,
\[
\begin{split}
0 \preceq \begin{pmatrix}  \chi(\Wu^{-1}\Wv) \end{pmatrix}_{\Wu,\Wv\in s_1^{-1}(\groupW_{\le \Ws} \setminus F_0)} 
& = \begin{pmatrix}  \chi(  (s_1^{-1}\Wu)^{-1}s_1^{-1}\Wv) \end{pmatrix}_{\Wu,\Wv\in \groupW_{\le \Ws}  \setminus F_0}
\\[3pt] & = \begin{pmatrix}  \chi(  \Wu^{-1}\Wv) \end{pmatrix}_{\Wu,\Wv\in \groupW_{\le \Ws}  \setminus F_0}.
\end{split}
\]
Given  $(\Wu,\Wv) \in \groupWw  \times \groupWw $, 
let $p_{(\Wu,\Wv)}: \groupYG \to B(\HE)$ denote $p_{(\Wu,\Wv)} = \chi(\Wu^{-1}\Wv)$.
Since  $\Ws \not \in F_0$,   Corollary \ref{c:psd-Parrott2} with $\Ws_0=\Ws$ implies
there exists $p_{\Ws_0,\Wt}: \groupYG \to B(\HE)$ for $\Wt \in F_0$ such that, with $p_{\Wt,\Ws}= p_{\Ws,\Wt}^*,$ 
\[
 \begin{pmatrix} p_{\Wu,\Wv} \end{pmatrix}_{\Wu,\Wv \in \groupW_{\le s} }
\]
 is psd. Now define $\widetilde{\chi}: \Her \groupW_{\le \Ws}  \to \FE(\groupYG)$ by $\widetilde{\chi} \vert_{\groupWw} = \chi$ and if $\Wt \in F_0$, then $\widetilde{\chi}(\Ws^{-1}\Wt) = p_{\Ws,\Wt}$ and 
 $\widetilde{\chi}(\Wt^{-1}\Ws)= p_{\Ws,\Wt}^*$.  
 It follows that 
\[
X_{\widetilde{\chi}} = \begin{pmatrix}  \chi(  \Wu^{-1}\Wv) \end{pmatrix}_{\Wu,\Wv\in \groupW_{\le \Ws} }  =\begin{pmatrix} p_{\Wu,\Wv} \end{pmatrix}_{\Wu,\Wv \in \groupW_{\le \Ws} } \succeq 0.
\qedhere
\]
\end{proof}

 Theorem~\ref{l:completeZ2} below is a companion to Theorem~\ref{l:compHX} in
 that it solves a matrix completion problem for $\Ztg.$
 Since the case $\vg=1$ is trivial, we assume $\vg\ge 2.$
 
\begin{theorem}
\label{l:completeZ2}\index{matrix completion, $\mathbb Z_2^{*{\tt g}}$ case}
    Set $\groupW=\Ztg.$
    With notations as above, if $X_\chi,$ a $\Ww$-partially defined form, is psd, then  letting $\Ws$
    denote the immediate successor of $\Ww$, the function
    $\chi$ extends to a function
    $\overline{\chi} :\HerIs \to \FE(\groupYG)$ such that $X_{\overline{\chi}}$ is also psd. 
\end{theorem}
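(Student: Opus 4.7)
The plan is to mirror the proof of Theorem \ref{l:compHX}, with the free-semigroup inverse $s_1^{-1}$ replaced by $s_1$ itself (its own inverse in $\Ztg$). Write $\Ws = s_1 s_2 \cdots s_k$ in reduced form and set $F = \groupW_{\le \Ws}$ together with
\[
 F_0 = \{\Wv \in F : s_1 \Wv \notin \groupWw\}.
\]
I will then apply Corollary \ref{psd-parrotMonoid} with the monoid $M = \Ztg$ (so $\Her M = \Ztg$) and $s_0 = \Ws$ to produce functions $p_{\Ws,\Wt}$ for $\Wt \in F_0$ that complete the partially defined data.

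The verifications are routine. First, $\Ws \notin F_0$ because $s_1\Ws = s_2 \cdots s_k$ has length $k-1 < |\Ws|$ and hence lies in $\groupWw$. Second, the submatrix indexed by $F \setminus \{\Ws\} = \groupWw$ is exactly $X_\chi$, psd by hypothesis. Third, for the submatrix indexed by $F \setminus F_0$, the identity $s_1^2 = e$ gives $(s_1\Wu)^{-1}(s_1\Wv) = \Wu^{-1}\Wv$, so left-multiplication by $s_1$ is an injection of $F \setminus F_0$ into $\groupWw$, and the submatrix coincides with a principal submatrix of $X_\chi$, hence is psd. Corollary \ref{psd-parrotMonoid} then supplies functions $p_{\Ws,\Wt}:\groupYG\to B(\HE)$ for $\Wt \in F_0$; defining $\overline{\chi}|_{\Her\groupWw} = \chi$, $\overline{\chi}(\Ws^{-1}\Wt) = p_{\Ws,\Wt}$, and $\overline{\chi}(\Wt^{-1}\Ws) = p_{\Ws,\Wt}^*$ yields the desired extension.

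The main obstacle is verifying that $\overline{\chi}$ is genuinely single-valued as a function on $\Her\groupW_{\le \Ws} \subseteq \Ztg$: because cancellations are possible in $\Ztg$, it is a priori conceivable that $\Ws^{-1}\Wt$ already coincides with some element of $\Her\groupWw$, creating a conflict with the existing $\chi$. I would close this gap via a collision lemma: for every $\Wt \in F_0$, the group element $\Ws^{-1}\Wt$ does not lie in $\Her\groupWw$. The proof proceeds by supposing $\Ws^{-1}\Wt = \Wu^{-1}\Wv$ with $\Wu, \Wv \in \groupWw$, factoring out the longest common prefixes of $(\Wu, \Wv)$ and of $(\Ws, \Wt)$, and matching reduced forms in the free product $\Ztg$. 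The matching forces $\Wu$ to terminate in a tail of $\Ws$, producing a length bound $|\Wu| \ge |\Ws| - c$ where $c$ denotes the length of the common prefix of $\Ws$ and $\Wt$. Combined with $|\Wu| \le |\Ww|$, the shortlex-successor relationship between $\Ws$ and $\Ww$ (in particular the explicit form $\Ws = x_1 x_2 x_1 x_2 \cdots$ when $|\Ws| = |\Ww|+1$), and the defining condition $s_1\Wt \notin \groupWw$, this yields a contradiction in every case.
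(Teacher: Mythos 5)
There is a genuine gap, and it sits exactly at the point where the $\Ztg$ case differs from the free-semigroup case. Write $\Ws=x_j\Wz$ in reduced form (so $s_1=x_j$). Then $\Wz\in F_0$ under your definition, since $x_j\Wz=\Ws\notin\groupWw$. The new group element attached to the pair $(\Ws,\Wz)$ is $\Ws^{-1}\Wz=\Wz^{-1}x_j\Wz$, and because $x_j^2=e$ this element is its own inverse: $\Ws^{-1}\Wz=\Wz^{-1}\Ws$. So the two matrix positions $(\Ws,\Wz)$ and $(\Wz,\Ws)$, which your one-shot application of Corollary~\ref{psd-parrotMonoid} fills with $p_{\Ws,\Wz}$ and $p_{\Ws,\Wz}^*$ respectively, index the \emph{same} element of $\Her\groupW_{\le\Ws}$. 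Hence your prescription $\overline{\chi}(\Ws^{-1}\Wt)=p_{\Ws,\Wt}$, $\overline{\chi}(\Wt^{-1}\Ws)=p_{\Ws,\Wt}^*$ is double-valued at $\Wt=\Wz$ unless $p_{\Ws,\Wz}$ happens to be self-adjoint, which the Parrott completion in no way guarantees; equivalently, a psd $X_{\overline{\chi}}$ forces the value at the involution $\Wz^{-1}x_j\Wz$ to be self-adjoint. Your collision lemma ($\Ws^{-1}\Wt\notin\Her\groupWw$ for $\Wt\in F_0$) is true but addresses only conflicts between new and old entries; it does not touch this unavoidable self-collision, nor the (also needed, and not mentioned) check that distinct new entries are never forced to coincide, i.e.\ that $\Ws^{-1}\Wt\ne\Wt'^{-1}\Ws$ for distinct $\Wt,\Wt'\in F_0$.

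This is precisely why the paper's proof is two-staged rather than a verbatim copy of Theorem~\ref{l:compHX}. It first works over the set $L=\groupW_{\le\Wz}\cup x_j\groupW_{\le\Wz}$, which is invariant under left multiplication by $x_j$, fills in only the single missing entry at $(\Wz,x_j\Wz)$, and then symmetrizes by averaging the completed matrix $R$ with its translate $R'$ under $\Wu\mapsto x_j\Wu$: both are psd, they agree off the two problematic positions, and the average carries the self-adjoint value $\tfrac12(p_{\Wz,x_j\Wz}+p_{\Wz,x_j\Wz}^*)$ at $\Wz^{-1}x_j\Wz$. Only afterwards is Corollary~\ref{psd-parrotMonoid} applied to $\groupW_{\le\Ws}$ with $F_0=\groupW_{\le\Ws}\setminus L$ (which excludes $\Wz$), together with an explicit reduced-word argument ruling out collisions among the remaining new entries. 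Note that you cannot rescue your argument by simply replacing $p_{\Ws,\Wz}$ with its self-adjoint average after the fact, since altering one off-diagonal entry of a psd matrix need not preserve positivity; the paper's averaging works only because left multiplication by $x_j$ is a bijection of $L$, and your index set $\groupW_{\le\Ws}$ has no such invariance.
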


\begin{proof}
 Since $\Ztg$ is well-ordered, $\Ww$ has an immediate successor $\Ws.$
 Express $\Ws$ in reduced form  as $x_j \Wz$ for a word $\Wz,$  possibly the empty word (the identity).
 In particular, $\Wz < x_j \Wz$ and thus $\Wz\le \Ww.$

  Let $\KL=\groupW_{\le \Wz}\cup x_j \groupW_{\le \Wz}.$ 
  Let  $(\Wa,\Wb) \in \KL \times \KL$ be given. We now determine
  when $\Wa^{-1}\Wb \in \Her \groupWw.$ If $\Wa,\Wb\in x_j\groupW_{\le z},$ then $\Wa^{-1}\Wb \in \Her \groupW_{\le \Wz}$
  and hence $\Wa^{-1}\Wb\in \Her \groupW_{\le \Ww}.$  Now suppose $\Wa \in \groupW_{\le \Wz}$ and  $\Wb \in x_j \groupW_{\le\Wz}.$
  Hence $\Wb=x_j \Wu$ for some $\Wu\le \Wz.$ If $\Wu<\Wz,$ then  $x_j \Wu < x_j \Wz$ and hence $x_j \Wu \le \Ww$
  and is thus in $\groupW_{\le\Ww}.$ Once again $\Wa^{-1}\Wb\in \Her \groupW_{\le \Ww}.$  Consequently, 
  if $\Wa\in \groupW_{\le \Wz}$ and  $\Wb \in x_j \groupW_{\le \Wz},$ but $\Wa^{-1}\Wb \not \in \HerIw$, then $\Wb=x_j \Wz.$ It now follows, from symmetry, if  $(\Wa,\Wb)\in L\times L$ and
  $\Wa^{-1}\Wb \not \in \HerIw,$ then   
  either $\Wa \in \groupW_{\le \Wz}$ and  $\Wb = x_j\Wz;$ or $\Wa = x_j\Wz$ and $\Wb \in \groupW_{\le \Wz}$. Next observe, 
  if $\Wu<\Wz,$ then $\Wu^{-1} x_j \Wz = (x_j\Wu)^{-1} \Wz\in \HerIw,$
  since $x_j \Wu< x_j\Wz$ (since $\Wz$ does not begin with $x_j$ (on the left) the relation is immediate
  if $\Wu$ also does not begin with $x_j$, and if $\Wu$ begins with $x_j,$ then $|x_j \Wu| < |\Wu| \le |\Wz|< |x_j \Wz|$)
  and hence  both $\Wz, \, x_j\Wu$ are in  $\groupWw.$  
  In a similar manner, $(x_j\Wz)^{-1}\Wu= \Wz^{-1} (x_j\Wu) \in \HerIw$ when $\Wu < \Wz$.
  On the other hand, $\Wz^{-1} x_j \Wz\notin \HerIw.$
  Thus if $(\Wa,\Wb)\in L\times L,$ then $\Wa^{-1}\Wb\in \HerIw$ 
  if and only if $(\Wa,\Wb)\notin \{(\Wz,x_j \Wz),(x_j \Wz,\Wz)\}.$

  Let $\KL^\prime = \KL\setminus \{\Ws\}.$  
  For $(\Wa,\Wb)\in \KL\times \KL \setminus \{(\Wz,x_j \Wz),(x_j \Wz,\Wz)\},$
  let $p_{\Wa,\Wb} =\chi(\Wa^{-1}\Wb):\groupYG\to B(\HE).$
  Since $\KL^\prime \subseteq \groupWw,$ by assumption,
\[
 P =\begin{pmatrix} p_{\Wa,\Wb} \end{pmatrix}_{\Wa,\Wb\in \KL'}
\]
 is psd. %
 Since the mapping $\varpi: \KL\setminus \{\Wz\}\to   {\KL'}$ 
 defined by $\varpi(\Wu)=x_j\,\Wu $ is a bijection
 ($L^\prime$ and $L\setminus\{\Wz\}$ have the same cardinality,
 namely $|L|-1,$ and $\varpi$ is easily seen to be onto),
\[
\begin{split}
 Q  & =\begin{pmatrix} p_{\Wa,\Wb} \end{pmatrix}_{\Wa,\Wb\in \KL\setminus \{\Wz\}}
  = \begin{pmatrix} \chi(\Wa^{-1} \Wb) \end{pmatrix}_{\Wa,\Wb\in \KL\setminus \{\Wz\}}
  \\ &  = \begin{pmatrix} \chi((x_j \Wa)^{-1} (x_j \Wb)) \end{pmatrix}_{\Wa,\Wb\in \KL\setminus \{\Wz\}}
  \cong \begin{pmatrix}  \chi(\Wu^{-1}\Wv) \end{pmatrix}_{\Wu,\Wv\in  {\KL'}}
  \\  & = P,
\end{split}
\]
 where $\cong$ means unitarily equivalent (via the spatial unitary conjugation implemented by
 $\varpi$).   Hence $Q$ is also psd.

 Corollary~\ref{c:psd-Parrott2},  applied with $F=L,$ $F_0=\{z\}$ and $s=\Ws,$ 
 now produces a function $p_{\Wz,x_j\Wz}: \groupYG \to B(\HE)$
 such that, setting $p_{x_j\Wz,\Wz}=p_{\Wz,x_j\Wz}^*,$ 
\[
 R= \begin{pmatrix} p_{\Wa,\Wb} \end{pmatrix}_{\Wa,\Wb\in \KL} \succeq 0. 
\]
 It need not be the case that $p_{\Wz,x_j\Wz} = p_{x_j\Wz,z}$ even though $\Wz^{-1} (x_j\Wz)= (x_j\Wz)^{-1}\Wz.$
 To remedy this deficiency we argue as follows.
 Observe that the mapping $\KL\ni \Wu \mapsto x_j \Wu\in \KL$ is a bijection. 
 Thus
\[
 R^\prime = \begin{pmatrix} p_{x_j \Wa, x_j \Wb}  \end{pmatrix}_{\Wa,\Wb\in \KL} 
\]
is also psd and thus so is
\[
 \widetilde{R} = \frac12 \left ( R + R^\prime \right ) =
  \frac{1}{2} \begin{pmatrix} p_{a,b} + p_{x_ja,x_jb} \end{pmatrix}_{a,b\in L}.
\]
As was established earlier,  so long as $(x_j\Wa, x_j \Wb) \not\in \{(\Wz,x_j\Wz),(x_j\Wz,\Wz)\},$
equivalently $(\Wa,\Wb)\notin \{(x_j \Wz,\Wz), (\Wz,x_j\Wz)\}$, we have 
$(x_j \Wa)^{-1} (x_j\Wb) = \Wa^{-1}\Wb\in \Her \groupWw.$ Thus  
$p_{x_j \Wa,x_j \Wb} = p_{\Wa,\Wb}=\chi(\Wa^{-1}\Wb)$ and therefore $\widetilde{p}_{\Wa,\Wb} = p_{\Wa,\Wb}.$
On the other hand, 
\[
  2 \widetilde{p}_{x_j\Wz,\Wz} = p_{x_j\Wz,\Wz} + p_{\Wz,x_j\Wz}  
   = p_{\Wz,x_j\Wz} +  p_{x_j\Wz,\Wz} = 2 \widetilde{p}_{\Wz,x_j\Wz}.
\]
Consequently, the function  $\widetilde{\chi}:\HerIw\cup \{\Wz^{-1}x_j \Wz\} \to \FE(\groupYG)$ given by
$\widetilde{\chi}(\Wa^{-1}\Wb) = \widetilde{p}_{\Wa,\Wb}$ is well defined, 
its restriction to $\Her \groupWw$ is $\chi$ and  
both 
\begin{equation}
    \label{e:completeZ2:1}
  \begin{pmatrix} \widetilde{\chi}(\Wa^{-1}\Wb) \end{pmatrix}_{\Wa,\Wb\in \groupWw}, \ \ \ 
  \begin{pmatrix} \widetilde{\chi}(\Wa^{-1}\Wb) \end{pmatrix}_{\Wa,\Wb\in \KL}
\end{equation}
are psd.

We next identify the set $F_0= \groupW_{\le \Ws}\setminus L\subseteq \groupWw.$
To this end, let $n=|\Wz|$ (the length of $\Wz$). Thus $|\Ws|=n+1$ and therefore
if $|\Wu|\le n,$  then $\Wu\le \Ws;$  that is, 
 $\groupW_{\le \Ws}$
contains all words of length at most  $n.$ Moreover, elements of $\groupW_{\le\Ws}$
have length at most $n+1.$ Now suppose $\Wu\in \groupW_{\le \Ws},$
but $\Wu\notin L$.
It is immediate that $\Wz<\Wu \le \Ww <\Ws.$ Thus, if  $\Wu=x_j \Wu_0$ in 
reduced form, 
then   $\Wu_0 <\Wz$ as  $\Wu=x_j \Wu_0  < \Ws = x_j \Wz.$ 
Hence $\Wu \in x_j \groupW_{\le \Wz}\subseteq L.$ From this contradiction, it follows that
there is a $k\ne j$ and a word $\Wu_0$ of length at most $n$ such that 
$\Wu=x_k \Wu_0$ (in reduced form) and $ \Wz < \Wu = x_k \Wu_0 < \Ws.$  

Now suppose $\Wz<\Wu \le \Ww\le \Ws$
 and
there is  a $k\ne j$ and a word $\Wu_0$ of length at most $n$ such that 
$\Wu=x_k \Wu_0.$
In particular, $\Wu\notin \groupW_{\le \Wz}.$
 On the other hand,  if $\Wu \in x_j \groupW_{\le \Wz},$ then  there is a $\Wv\le \Wz$ such that
such that $x_k \Wu_0 = x_j \Wv.$ Hence $\Wv=x_j x_k \Wv_1$ in reduced form for
some $\Wv_1$ of length at most $n-2.$  We conclude that $x_k \Wu_0 = x_k \Wv_1,$ 
which leads to the contradiction that $|\Wu| \le n-1 < |\Wz|.$  Summarizing,
\begin{equation}
\label{e:FminusL}
 F_0 = \groupW_{\le \Ws}\setminus L = \{ x_k \Wu_0 :  |\Wu_0| \le n, \, k\ne j,  \,  \Wz < x_k \Wu_0 <\Ws\}.\footnote{The set $F_0$
is empty if and only if  $\vg=2$ and $w$ is
the largest word of its length.}
\end{equation}
 Note that $\Wu=x_k\Wu_0$ can be assumed in reduced form, as otherwise, $|\Wu|\le n-1,$ in which case $\Wu < \Wz.$

 Next suppose 
\[
    (\Wu,\Wv)\in \Phi= \groupW_{\le \Ws}\times \groupW_{\le \Ws}\setminus  \left [ (F_0\times \{\Ws\}) \cup (\{\Ws\}\times F_0) \right ].
\]
 If $\Wu\ne \Ws \ne \Wv,$ then $\Wu^{-1}\Wv \in \Her \groupWw.$ Next consider the case that 
 $\Wv=\Ws$ and 
 $\Wu\notin F_0.$ Thus $\Wu\in L.$ If $\Wu<\Wz,$ 
 then $x_j \Wu < x_j\Wz =s.$
 Thus $x_j \Wu \le \Ww$ and hence
\[
  \Wu^{-1}\Wv =  \Wu^{-1} \Ws = \Wu^{-1} x_j \Wz = (x_j \Wu)^{-1} \Wz \in \Her \groupWw.
\]
 On the other hand, if $\Wu\in x_j\groupW_{\le z},$ then it is evident that $\Wu^{-1} \Ws \in \Her \groupWw.$
 Finally, if $\Wu=z,$ then $(\Wu,\Wv)=(\Wz,\Ws)$ and $\Wu^{-1}\Ws = \Wz^{-1} x_j \Wz\in \Her \groupWw \cup\{\Wz^{-1} x_j \Wz\}.$ 
 Hence, in any case, if $\Wv=\Ws$ and $\Wu\notin F_0,$ then $ \Wu^{-1} \Wv \in \Her \groupWw \cup\{\Wz^{-1} x_j \Wz\}.$
 By symmetry, if $\Wu=\Ws$ and $\Wv\notin F_0,$ then $\Ws^{-1}\Wv \in\Her \groupWw \cup\{\Wz^{-1}x_j\Wz\}.$
 Consequently,  $\Phi\subseteq \Her \groupWw \cup\{\Wz^{-1}x_j\Wz\}$ and 
 $\widetilde{\chi}(\Wu^{-1}\Wv)$
 is defined for $(\Wu,\Wv)\in \Phi.$

 Let  $\Wu\in F_0=\groupW_{\le \Ws}\setminus L$ be given. By equation~\eqref{e:FminusL},   there is a $k\ne j$ and a $\Wu_0$ of length at
 most $n$ and at least $n-1$ such that $\Wu=x_k \Wu_0$ and $\Wz < \Wu = x_k \Wu_0 <\Ws.$  Suppose $\Wa,\Wb \le \Ww$
 and $\Wa^{-1}\Wb = \Ws^{-1} \Wu= \Wz^{-1} x_j x_k \Wu_0.$  
 Since $2n+2 \ge |\Ws^{-1} \Wu| \ge 2n+1$ and
 $|\Wa|, \, |\Wb| \le n+1$, either  $|\Wa|\ge n$ and $|\Wb|=n+1$ 
or $|\Wa|=n+1$ and $|\Wb|\ge n.$ In either case $\Wa^{-1}\Wb = \Ws^{-1} \Wu= \Wz^{-1} x_j x_k \Wu_0$ is in reduced form. 
In the first case either $\Wb=x_k \Wu_0$ and $\Wa = x_j \Wz=\Ws,$ contradicting $\Wa<\Ws;$
or $\Wb= x_j x_k \Wu_0,$ which implies $\Ws = x_j \Wz < x_j x_k\Wu_0 = x_j \Wu =\Wb,$
contradicting $\Wb < \Ws.$  In the other case, $\Wa =\Ws> \Ww,$ contradicting $\Wa \le \Ww.$
At this point we have shown, if $\Wu\in F_0,$ then $\Wu^{-1}\Ws\notin \Her \groupWw$
and by symmetry $\Ws^{-1}\Wu\notin  \Her\groupWw.$  Finally,  $\Ws^{-1}\Wu \neq \Wz^{-1} x_j \Wz=\Ws^{-1}\Wz$
since $\Wz <\Wu.$ Similarly $\Wu^{-1}\Ws\neq \Wz^{-1} x_j \Wz.$  We conclude if $\Wu\in F_0,$
then $\Wu^{-1}\Ws$ and $\Ws^{-1}\Wu$ are not in $\Her \groupWw  \cup \{\Wz^{-1} x_j \Wz\}.$  
Summarizing, $\Phi\supseteq \Her \groupWw \cup\{\Wz^{-1}x_j\Wz\}$ and therefore,
\[
\Phi = \groupW_{\le \Ws}\times \groupW_{\le \Ws}\setminus  \left [ (F_0\times \{\Ws\}) \cup (\{\Ws\}\times F_0) \right ]
 = \Her \groupWw \cup\{\Wz^{-1}x_j\Wz\}.
\]

{At this point the hypotheses of Corollary~\ref{c:psd-Parrott2} for $\groupW_{\le \Ws},\, F_0$ and $s_0=\Ws$ have been validated
{for $\widetilde{p}_{\Wa,\Wb} =\widetilde{\chi}(\Wa^{-1}\Wb)$ for $(\Wa,\Wb)\in \Phi.$  Moreover, 
if $(\Wa,\Wb)\notin\Phi,$ then $\Wa^{-1}\Wb$ is not in the domain of $\widetilde{\chi}.$}
The psd completion promised by the corollary produces entries $p_{\Wu,\Ws}$
and $p_{\Ws,\Wu}$ 
in the $(\Wu,\Ws)$ and $(\Ws,\Wv)$ locations for $\Wu,\Wv\in F_0.$
 Thus, so long as
no two of these entries are required to be the same, we obtain a psd completion of
the partially defined psd function $\widetilde{\chi}.$  
From the definition of $L$ it is immediate that if $\Wu,\Wv\in F_0$ and $\Wu^{-1}\Ws=\Wv^{-1}\Ws,$ 
equivalently, $\Ws^{-1}\Wu= \Ws^{-1} \Wv,$ then $\Wu=\Wv.$  Now suppose $\Wu^{-1}\Ws = \Ws^{-1}\Wv.$
From the description of $F_0$ in equation~\eqref{e:FminusL},  there exists $\Wu_0,\Wv_0$ of length at
most $n$ and $k,\ell\ne j$ such that $\Wu=x_k\Wu_0$ and $\Wv=x_\ell \Wv_0$ (in reduced form) and 
$\Wz< x_k\Wu_0, \, x_\ell \Wv_0 <\Ws.$ Hence
\begin{equation}
    \label{e:completeZ2:2}
  \Wu_0^{-1} x_k x_j \Wz = \Wz^{-1} x_j x_\ell \Wv_0.
\end{equation}
Both words in equation~\eqref{e:completeZ2:2} are 
in reduced form, which,  since $\Wz < x_\ell \Wv_0<\Ws=x_j \Wz$ implies $\Wv_0=\Wz.$ Likewise
$\Wu_0=\Wz$ too {and we reach the contradiction  $x_kx_j=x_jx_\ell.$}
Hence $\widetilde{\chi}$ extends from $\Her \groupWw \cup\{\Wz^{-1}x_j \Wz\}$ to 
a psd function $\overline{\chi}: \Her \groupWx{\Ws}\to F_\HE(G)$ 
as desired.
}
\end{proof}

We point out that Theorem~\ref{l:completeZ2}
fails for 
groups such as $\ZZ_3\ast \ZZ_2$ or $\ZZ_3^{*2}$, see Section \ref{s:examples}.\looseness=-1

\section{Psd functions on groups arise from unitary representations}\label{sec:psd_functions_on_groups_arise_from_unitary_representations}

In this section we produce honest unitary representations from psd kernels:
we show that every psd function 
$p:\Her \groupW \times \groupYG \to  B(\HE)$ is realized as a compression of a unitary representation of $\Her \groupW \times \groupYG$ in Proposition \ref{p:freeY},
 a fact that is well known in the case of a group. 
This representation-theoretic model provides a  bridge from the completion results of Section~\ref{s:freezedtwo} to the factorization theorems proved in Section~\ref{sec:proofs}. 

Let $\groupW$  be as in the introduction. Thus $\groupW$ is either the free semigroup $\llx_\vg$ on $\vg$
letters or the group $\Ztg.$ Let $\groupYG$ denote a group,   $\HE$  a Hilbert space and recall 
a function $p:\Her\groupW \times \groupYG \to B(\HE)$ is \df{positive semidefinite (psd)} if for
each finite subset $\mathscr{F}$ of $\groupW\times \groupYG $  the block operator matrix
\[
  \begin{pmatrix} p(\WGu^{-1}\WGv) \end{pmatrix}_{\WGu,\WGv\in \mathscr{F}} %
\]
 is psd. In Proposition~\ref{p:freeY} below $e$ denotes the identity in the semigroup $\groupW \times \groupYG.$
 
\begin{proposition}
 \label{p:freeY}
    If $p:\Her \groupW \times \groupYG \to  B(\HE)$ is psd, then there is a unitary representation
    $\pi$ of $\Her \groupW \times \groupYG$ on a Hilbert space $\HF$ 
    and {a bounded operator}  $W:\HE\to \HF$ such that
\[
   p(\WGu^{-1}\WGv) = W^* \pi(\WGu^{-1} \WGv) W
\]
for $\WGu,\WGv\in \groupW\times\groupYG.$  Moreover, if $p(e)=I_\HE,$ then $W$ can be chosen isometric,
$W^*W=I_\HE.$
\end{proposition}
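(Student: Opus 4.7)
I would proceed by a Gelfand--Naimark--Segal/Kolmogorov dilation scheme. Set $M := \groupW \times \groupYG$ and let $\mathscr{F}$ denote the space of finitely supported $\HE$-valued functions on $M$. Equip $\mathscr{F}$ with the sesquilinear form
\[
[\phi, \psi]_p = \sum_{\WGu, \WGv \in M} \langle p(\WGv^{-1} \WGu)\, \phi(\WGu),\, \psi(\WGv) \rangle_{\HE},
\]
which is well defined because $\WGv^{-1}\WGu \in \Her \groupW \times \groupYG$ for $\WGu, \WGv \in M$, and is positive semidefinite by the hypothesis on $p$. Quotient by null vectors and complete to obtain the target Hilbert space $\HF$. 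Define $W : \HE \to \HF$ by $W\xi = [\chi_e \otimes \xi]$, where $\chi_e$ is the indicator of the identity; a direct computation yields $\langle W\xi, W\eta\rangle = \langle p(e)\xi, \eta\rangle$, so $W$ is bounded and isometric whenever $p(e) = I_\HE$.

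Next, for each $g \in M$, define $V_g$ on $\mathscr{F}$ by $(V_g \phi)(\WGu) = \phi(g^{-1}\WGu)$ when $g^{-1}\WGu \in M$ and zero otherwise. The substitution $\WGu \mapsto g\WGu$, $\WGv \mapsto g\WGv$ (valid because $gM \subseteq M$) shows $[V_g \phi, V_g \psi]_p = [\phi, \psi]_p$, so $V_g$ descends to an isometry on $\HF$ and $V$ is a semigroup homomorphism $M \to \mathrm{Isom}(\HF)$. The explicit identity $V_g W\xi = [\chi_g \otimes \xi]$ combines with the definition of the form to give
\[
W^* V_{\WGv}^* V_{\WGu} W = p(\WGv^{-1}\WGu) \qquad \text{for all } \WGu, \WGv \in M.
\]

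When $\groupW = \Ztg$ the set $M$ is a group, so each $V_g$ is automatically unitary (with inverse $V_{g^{-1}}$); setting $\pi := V$ the desired identity $W^* \pi(\WGu^{-1}\WGv) W = W^* V_{\WGu}^* V_{\WGv} W = p(\WGu^{-1}\WGv)$ is immediate. When $\groupW = \llx_\vg$ the $V_{x_i}$ are generally non-surjective isometries that nonetheless commute with the unitary representation $\rho := V\!\mid_{\groupYG}$ (since the generators $x_i$ commute with elements of $\groupYG$ in $M$). I would then produce unitaries $U_i := \pi(x_i)$ on an ambient Hilbert space $\HF' \supseteq \HF$ by performing Sz.-Nagy unitary dilations one generator at a time: since $\freeG_\vg$ imposes no relations among its generators, the individual Wold-type dilations of the $V_{x_i}$ can be packaged together on a single space $\HF'$ by letting each $U_i$ act as its Sz.-Nagy dilation on the $V_{x_i}$-summand and as the identity on the new summands belonging to the other $U_j$. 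Because $\rho$ preserves both $V_{x_i}\HF$ and its orthogonal complement, it extends diagonally to a unitary representation $\tilde\rho$ of $\groupYG$ on $\HF'$ that still commutes with every $U_i$. The universal property of $\freeG_\vg \times \groupYG$ then assembles these into a unitary representation $\pi$, and since the dilation agrees with $V_{\WGu}$ on $\HF$ for positive $\WGu \in M$ we obtain $\pi(\WGu) W = V_{\WGu} W$, whence $W^* \pi(\WGu^{-1}\WGv) W = (V_{\WGu} W)^* V_{\WGv} W = p(\WGu^{-1}\WGv)$.

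The main technical hurdle I anticipate is the joint unitary dilation in the $\llx_\vg$ case while preserving commutation with $\rho$. I expect this to be routine rather than substantive: the absence of commutation relations among the generators of $\freeG_\vg$ avoids the classical Ando/Parrott obstructions to simultaneous unitary dilation, and commutation with $\rho$ carries over automatically because $\rho$ respects the Wold decomposition of each $V_{x_i}$. The isometric claim under $p(e) = I_\HE$ follows directly from the $W$ computation in the first paragraph.
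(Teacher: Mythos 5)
Your proposal is correct and is essentially the paper's argument: a Kolmogorov/GNS construction from $p$, left-translation isometries (unitary on the $\groupYG$ part, and unitary throughout when $\groupW=\Ztg$, where the paper simply cites the standard group case), then unitary extensions of the $V_{x_i}$ commuting with the extended representation of $\groupYG$, followed by the same compression computation through $W$. The only divergence is in how the dilation step is packaged: the paper extends all generators at once on the doubled space $\cH\oplus\cH$ via the block unitary with rows $(V_j,\; I-V_jV_j^*)$ and $(0,\; V_j^*)$, whereas you use per-generator Sz.--Nagy/Wold unitary extensions padded by identities on the foreign defect summands, which yields the same invariant-subspace extension property ($\pi(\WGu)W=V_{\WGu}W$) that the final computation requires.
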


\begin{remark}\rm 
  Recall %
  that unitary representations $\pi$ 
   of $\Ztg \times \groupYG$ on a Hilbert space $\HF$ correspond to commuting unitary representations 
  $\tau$  of $\Ztg$ and $\rho$ of $\groupYG$ on $\HF.$ In particular, $\tau$ is determined by unitaries
  $U_j=\tau(x_j)$ satisfying $U_j^2=I$ and commuting with $\rho$ in the sense that $U_j\rho(\YGg) = \rho(\YGg)U_j$
  for all $j$ and $\YGg \in \groupYG.$ In this case $\pi(\Wu \YGg)=\tau(\Wu)\rho(\YGg)$ for $\Wu\in \Ztg$ and $\YGg\in \groupYG.$

  In the case of the   free semigroup $\llx_\vg$, there are no constraints on the unitary operators $U_j.$ 
\qed
\end{remark}

\begin{proof}
 In the case $\groupW=\Ztg$, so that $p$ is a psd function on the group $\Ztg\times\groupYG,$
this proposition is a special case of a  standard fact. See for instance \cite[Theorem~4.8]{Paulsen}. 

Suppose now that $\groupW=\llx_\vg$ is the free semigroup (on $\vg$ letters). 
Using the techniques in \cite[Theorem~4.8]{Paulsen} we first show that there exists
 a Hilbert space $\cH,$ an isometry  $\iota: \HE \to \cH$,  a unitary representation
    $\rho$ of $\groupYG$ on $\cH$ and a tuple $(V_1,\dots,V_\vg)$ of isometries   on $\cH$
    that commute with $\rho$ such that 
\[
   p(\Wu^{-1}\Wv\, \YGg^{-1}\YGh) = \iota^* (V^\Wu)^*V^{\Wv}\rho(\YGg^{-1}\YGh) \iota,
\]
for $\Wu,\Wv\in\groupW$ and $\YGg,\YGh\in\groupYG.$

Let $ \mathcal{K} = C_{00}( \groupW \times \groupYG, \HE)$ denote
the semi-inner product space consisting of functions on  $\groupW \times \groupYG$  
{with values in $\HE$} whose support is finite endowed with the positive semidefinite form,
\[
  [f_1, f_2]_\mathcal{K} = \sum_{ \WGu_1, \WGu_2 \in { \groupW \times \groupYG   }} \langle    
   p( \WGu_2^{-1}\WGu_1 ) f_1(\WGu_1), f_2(\WGu_2)          \rangle,
\]
 for $f_1, f_2 \in  C_{00}( \groupW \times \groupYG, \HE)$. 
Define
$ \gamma: \groupW \times \groupYG \xrightarrow{} \mathcal{L}(\mathcal{K}),$
where $\mathcal{L}(\mathcal{K})$ is 
the space of linear maps from $\mathcal{K}$ to itself,  as follows. Given $\lralpha \in \groupW \times\groupYG,$ 
let  $\gamma(\lralpha) = L_{\lralpha^{-1}},$
 where $L_{\lralpha^{-1}}:\mathcal{K}\to \mathcal{K}$ 
 is the linear map defined, for $f\in \mathcal{K},$ by
\[
(L_{\lralpha^{-1}}f)(\WGu)  = 
\begin{cases} 
     f(\lralpha^{-1} \WGu) & \text{if} \; \lralpha^{-1} \WGu \in \groupW \times \groupYG \\
     0 & \text{otherwise}.
  \end{cases}
\]
 {Given $\beta\in \groupW$ observe, because $\groupW=\llx_\vg$ is free,  that $\beta^{-1}\alpha^{-1}\WGu \in \groupW\times G$
 requires $\alpha^{-1}\WGu \in \groupW\times G$ for $\WGu\in \groupW\times G,$ from which it follows
 that $\gamma(\alpha \beta) = \gamma(\alpha)\gamma(\beta).$}

By construction, $\gamma(e)$ is the identity (where $e$ is the identity in $\groupW\times\groupYG$) and $\gamma$ is multiplicative.
Moreover, 
a quick calculation shows, for $\lralpha\in \groupW\times\groupYG,$ %
\begin{equation}
\label{e:descend}
 [f, f]_{\mathcal{K}}    =     [L_{\lralpha}f,L_{\lralpha} f]_{\mathcal{K}}.
\end{equation}
In particular, 
$\gamma(\lralpha)$ is an isometry\footnote{At this point the proof diverges slightly from that in Paulsen where $\gamma(\lralpha)$ is
unitary since the domain of $\gamma$ is a group.}
with respect to the psd form $[\cdot,\cdot]_{\mathcal{K}}$ for all $\lralpha \in \groupW \times \groupYG;$ 
and $\gamma(\YGg)$ is unitary with respect to $[\cdot,\cdot]_{\mathcal{K}}$ for each $\YGg\in\groupYG.$
Moreover, the map $\iota_0:\HE\to\mathcal{K}$ defined, for $\WGu \in \groupW\times\groupYG$ and $\he\in \HE,$  by
\[
\iota_0(\he)[\WGu] =   \begin{cases} 
      \he & \text{if} \; \WGu = e \\
      0 & \text{otherwise,}
   \end{cases}
\]
where $\he\in \HE$ %
is  bounded since 
\[
 [ \iota_0(\he),\iota_0(\he)]_{\cK} = 
 \sum_{\WGu,\WGv \in \groupW\times G} \langle p(\WGu^{-1}\WGv) \iota(\he)[{\WGv}], \iota(\he)[{\WGu}]\rangle_\HE
  = \langle p(e) \he,\he \rangle \le \|p(e)\| \, \|\he\|^2.
\]

Moreover,  for $\he_1,\he_2\in \HE$ and $\lrbeta_1,\lrbeta_2\in \groupW \times\groupYG,$

\begin{equation}
    \label{e:compress}
\begin{split}
      [ \gamma(\lrbeta_1) \iota_0(\he_1), \gamma(\lrbeta_2)\iota_0(\he_2)]_{\mathcal{K}}
      & = 
    \sum_{\WGu_1,\WGu_2} \langle p(\WGu_2^{-1}\WGu_1) \iota_0(\he_1)[\lrbeta_1^{-1}\WGu_1],\iota_0(\he_2)[\lrbeta_2^{-1} \WGu_2]\rangle 
      \\[3pt] &  = \langle p({\lrbeta_2^{-1}\lrbeta_1})\he_1,\he_2\rangle_\HE.
\end{split}
\end{equation}

Let $\mathcal{N} = \{ f \in C_{00}( \groupW \times \groupYG, \HE) : [f,f]_{\mathcal{K}} = 0 \}$.
The semi-inner product on $\mathcal{K}$ passes to an inner product $\langle \cdot,\cdot\rangle_{\mathcal{K}/\mathcal{N}}$
 on the quotient
${\mathcal{K}/\mathcal{N}}$ as
\[
   \langle [f_1], [f_2] \rangle_{\mathcal{K}/\mathcal{N}} = [f_1, f_2]_{\mathcal{K}},
\]
where  $f_1,f_2\in\mathcal{K}$ and $[f]$ denotes the class of $f$ in $\mathcal{K}/\mathcal{N}.$  %
 Moreover, equation~\eqref{e:descend} 
 implies $\gamma(\lralpha)$ descends to an isometry on $\mathcal{K}/\mathcal{N}.$
  Let $\cH$ be the Hilbert space obtained as the  completion of $\mathcal{K}/\mathcal{N}$  under the inner product $\langle \cdot, \cdot \rangle_{\mathcal{K}/\mathcal{N}}.$ It is immediate that $\gamma(\lralpha)$ extends to an isometry on $\cH,$
  which we continue to denote $\gamma(\lralpha).$  Thus we obtain a multiplicative map, still denoted $\gamma,$ from $\groupW\times \groupYG$ to  $\cB(\cH)$
 such that each $V_j=\gamma(x_j)$ is isometric and 
  $\gamma \vert_{\groupYG}:\groupYG\to \cB(\cH)$  
  is a unitary representation.

Set $\HF=\cH\oplus \cH$ and define 
\[
  U_j = \begin{bmatrix}
   V_j & I - V_j V_j^* \\
   0 & V_j^*  \end{bmatrix}.
\]
 By construction, each $U_j$ is unitary. 
 The mapping $\widetilde{\rho}: \groupYG\to B(\HF)$ defined by
\[
 \widetilde{\rho}(\YGg) = \begin{bmatrix} \rho(\YGg) & 0 \\ 0  & \rho(\YGg) \end{bmatrix}
\]
 for $\YGg\in \groupYG$ is a unitary representation of $\groupYG.$ 
Moreover, since 
\[
V_j \rho(\YGg)^* =  V_j \rho(\YGg^{-1})= \rho(\YGg^{-1}) V_j = \rho(\YGg)^* V_j,
\]
$\rho$ commutes with each $V_j^*$ and hence $(I-V_jV_j^*)\rho(\YGg)=\rho(\YGg)(I-V_jV_j^{*}).$
Thus $\widetilde{\rho}$ commutes with each $U_j.$   
Define  %
 $\pi: \Her \groupW \times \groupYG \xrightarrow{} \HF$ by 
\[
   \pi(\Wu^{-1}\Wv, \lrmu^{-1}\lrnu) = U^{\Wu^{-1}\Wv} \widetilde{\rho}(\lrmu^{-1}\lrnu)   
\]
for  %
$\Wu,\Wv\in \groupW$ and $\lrmu, \lrnu \in \groupYG.$
Thus $\pi$ is a unitary
representation of $\groupW \times \groupYG$. 

To complete the proof, let $\iota:\HE\to \cH$ denote the map $\iota(\lcal e)=[\iota_0(\lcal e)],$
the class of $\iota(\lcal e)$ in $\mathcal{K}/\mathcal{N}\subseteq \cH;$ define
 $W:\HE \to \HF$ by $W \lcal e= \iota \lcal e \oplus 0;$ and compute, for $\he,\hf \in \HE,$
 using equation~\eqref{e:compress},
\[
\begin{split}
 \langle W^* \pi(\Wu^{-1}\Wv,\lrmu^{-1}\lrnu) W \he,\hf\rangle_\HE
 & =  \langle U^\Wv \widetilde{\rho}(\lrnu) (\iota \he \oplus 0), U^\Wu \widetilde{\rho}(\lrmu) (\iota \hf \oplus 0)\rangle_\HF 
 \\ & = \langle V^\Wv   \rho(\lrnu) \iota \he, V^\Wu \rho(\lrmu) \iota \hf\rangle_{\cH}
 \\ & = [\gamma(\Wv \lrnu) \he, \gamma(\Wu \lrmu) \hf]_{\mathcal{K}}
 \\ & = \langle p(\Wu^{-1}\Wv,\lrmu^{-1}\lrnu) \he,\hf\rangle_\HE,
\end{split}
\]
and the proof is complete.
\end{proof}

\newcommand{\HH}{\mathbb{H}}

\section{A nested sequence and uniform truncation}
\label{s:nested-sequences}
 The proof of Theorem~\ref{t:main} is accomplished by assigning
 to a given trigonometric polynomial
 $A$ an operator system and completely positive map induced by $A$
 on that operator system. In this section, we identify such an operator system
 via a compactness argument.

Recall that 
$\groupW$ is 
$\llx_\vg$ or $\Ztg$.  
Fixing a $\Ww \in \groupW$ and a positive integer
$M$ for now,  let $\mathscr{S}_\Ww$\index{$\mathscr S$}  denote the set of 
functions $\varphi:\Her\groupWw\to\CC$ identified with the set of matrices
indexed by $\groupWw,$
\[
 X_\varphi = \begin{pmatrix} \varphi(\lrv^{-1}\lru) \end{pmatrix}_{\lru,\lrv\in\groupWw}.
\]
 In a similar manner,  for positive integers $K,$ let $\msTK{M}$\index{$\mathscr T$} denote the set of functions $\psi:\Her\semiyN{M}\to M_K(\CC)$
 identified with the  set of matrices
\[
 \Upsilon_\psi = \begin{pmatrix} \psi(\Yb^{-1}\Ya) \end{pmatrix}_{\Ya,\Yb\in\semiyN{M}}.
\]
 Set $\msT{M}=\msTK{M}$ when $K=1$ and note
 $\msTK{M}$ and  $\msT{M}\otimes M_K(\CC)$ are unitarily equivalent.

For positive integers $W\ge M,$ 
let $\msLK{\Ww}{W} = \mathscr{S}_{\Ww} \otimes \msTK{W}$.\index{$\mathscr L$} Note that 
$\msLK{\Ww}{W}$ {is an operator system that} is  naturally identified (up to unitary equivalence)
with functions $p:\Her \groupWw \times \Her \semiyN{W}\to M_K(\CC).$ 
To such a $p$ we associate a function $\chi_p:\Her \groupWw \to F_K(\Her \semiyN{W}),$
where $F_K(\Her \semiyN{W})$ is
the set of functions from $\Her \semiyN{W}$ to $M_K(\CC),$ defined by
\[
\chi_p(\lru)[\Ya] =\chi_p(\lru,\Ya) = p(\lru \Ya),
\]
for $\Wu\in \Her \groupWw$ and $\Ya\in \Her \semiyN{W}.$ 
As usual, to $p$ we associate the matrix
\begin{equation}
\label{e:Zp}
 Z=Z_p = \begin{pmatrix} p(\WYv^{-1}\WYu) \end{pmatrix}_{\WYu,\WYv\in  \groupWw \times  \semiyN{W}}
   =\begin{pmatrix} \chi_p(\Wv^{-1}\Wu) \end{pmatrix}_{\Wu,\Wv\in \groupWw}.
\end{equation}

Given $Z \in \msLK{\Ww}{W}$, let $Z\vert_{M}$ denote the restriction of $Z$ to $\msLK{\Ww}{M}$.  That is, viewing $Z$ 
as the matrix in equation~\eqref{e:Zp},
\[
 Z\vert_M = \begin{pmatrix} p(\WYu^{-1}\WYv) \end{pmatrix}_{\WYu,\WYv\in \groupWw \times \semiyN{M}}
\]

Let $\msLK{\Ww}{W}^+$\index{$\mathscr L^+$} denote the psd elements of $\msLK{\Ww}{W}$ viewed equivalently either as psd functions on
$\Her \groupWw \times \Her \semiyN{W}$ or psd matrices $Z$ indexed by { $\groupWw \times \groupWw \times \semiyN{W} \times \semiyN{W}$}
with the property that $Z_{\WYu,\WYv},$ the $(\WYu,\WYv)$ entry of $Z,$ 
depends only upon $\WYu^{-1} \WYv.$ 
Let 
\begin{equation}
    \label{d:fcW}\index{$\mathfrak C_W$}
 \fC_{W} =\{ \mathcal{Z} = Z|_M: Z\in   \msLK{\Ww}{W}^ +, \, Z_{e,e}= I_K\}\subseteq \mathscr{L}^+_{\Ww,M,K}.
\end{equation}

\begin{lemma}
\label{l:nested}
With the notations above, 
 \begin{enumerate}[\rm (i)]
  \item  \label{i:nested:i}
    $\fC_W \supseteq \fC_{W+1};$
  \item \label{i:nested:ii}
    if $Z\in \fC_W,$  then $Z_{g,h} Z_{g,h}^* \preceq I$ for all $g,h\in \groupWw\times \semiyN{W};$
  \item \label{i:nested:iii}
    each $\fC_W$ is compact;
  \item \label{i:nested:iv}
    If $\mathcal{Z}\in \bigcap_W \fC_W,$ then  
   there is a psd function $p:{ \Her \groupW} \times \groupY \to M_K(\mathbb{C}) $ such that 
 \[
   \mathcal{Z}_{\WYu,\WYv}  = p(\WYu^{-1}\WYv) 
\]
  for all $\WYu,\WYv\in \groupW\times \semiyN{M}.$
   
\end{enumerate}
\end{lemma}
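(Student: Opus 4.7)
Parts (i)--(iii) are structural and I would dispatch them quickly. For (i), if $Z\in\msLK{\Ww}{W+1}^+$ satisfies $Z_{e,e}=I_K$, then its principal submatrix indexed by $\groupWw\times\semiyN{W}$ lies in $\msLK{\Ww}{W}^+$, still has $(e,e)$-entry $I_K$, and its further restriction to $\groupWw\times\semiyN{M}$ returns $Z|_M$; hence $Z|_M\in\fC_W$. For (ii), I would use translation invariance $Z_{g,h}=p(h^{-1}g)$ together with $Z_{e,e}=I_K$ to conclude that every diagonal entry $Z_{g,g}$ equals $I_K$, so positivity of the $2\times 2$ principal block
\[
\begin{pmatrix} I_K & Z_{g,h}\\ Z_{g,h}^* & I_K\end{pmatrix}\succeq 0
\]
forces $Z_{g,h}Z_{g,h}^*\preceq I_K$. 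For (iii), the set $\{Z\in\msLK{\Ww}{W}^+ : Z_{e,e}=I_K\}$ is cut out by closed conditions inside the finite-dimensional space $\msLK{\Ww}{W}$ and is norm-bounded by (ii), hence compact; then $\fC_W$ is its image under the continuous linear restriction map $Z\mapsto Z|_M$, and so is also compact.

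The substance is in (iv), where the plan is to combine compactness with the matrix completion theorems of Section~\ref{s:freezedtwo}. For each $W\ge M$ I would choose a witness $Z^{(W)}\in\msLK{\Ww}{W}^+$ with $Z^{(W)}_{e,e}=I_K$ and $Z^{(W)}|_M=\mathcal Z$, and write $Z^{(W)}_{\WYu,\WYv}=p^{(W)}(\WYv^{-1}\WYu)$ for the underlying $p^{(W)}\colon \Her\groupWw\times\Her\semiyN{W}\to M_K(\CC)$. Part (ii) supplies the uniform bound $\|p^{(W)}(g)\|\le 1$ on its domain. Embedding the $p^{(W)}$, with coordinates outside their domain filled in arbitrarily, into the compact product $\prod_{g\in\Her\groupWw\times\groupY}\{m\in M_K(\CC):\|m\|\le 1\}$, Tychonoff (or a diagonal extraction, since the index set is countable) yields a subsequence $p^{(W_k)}\to p_0$ converging pointwise. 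For any finite $\mathscr X\subset\groupWw\times\lly$ one has $\mathscr X\subset\groupWw\times\semiyN{W_k}$ for $k$ large, so positivity of the form $(p^{(W_k)}(u^{-1}v))_{u,v\in\mathscr X}$ passes to the pointwise limit, and $p_0\colon\Her\groupWw\times\groupY\to M_K(\CC)$ is psd; by construction it agrees with $\mathcal Z$ on $\Her\groupWw\times\Her\semiyN{M}$.

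It remains to extend $p_0$ from $\Her\groupWw$ to $\Her\groupW$ in the first coordinate. I would view $p_0$ as a $\Ww$-partially defined psd form over $\Her\groupWw$ with values in $F_K(\groupY)$ in the sense of Section~\ref{s:freezedtwo}. Enumerate the elements of $\groupW$ strictly above $\Ww$ in shortlex order as $\Ws_1<\Ws_2<\cdots$; since each shortlex initial segment is finite, this enumeration has order type $\omega$. Apply Theorem~\ref{l:compHX} in the free semigroup case or Theorem~\ref{l:completeZ2} in the $\Ztg$ case to extend $p_0$ successively to psd $p_j$ on $\Her\groupW_{\le\Ws_j}\times\groupY$, each step extending its predecessor; the union $p:=\bigcup_j p_j$ is then a well-defined function on all of $\Her\groupW\times\groupY$. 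Since any finite test set $\mathscr X\subset\groupW\times\lly$ eventually lies in $\groupW_{\le\Ws_j}\times\lly$, the resulting $p$ is psd, and the required identity between $\mathcal Z$ and $p$ holds by construction. The main delicacy, I expect, is the pointwise limit step: verifying that positivity survives passage to the limit and that $p_0$ truly agrees with the prescribed $\mathcal Z$ rather than with some other psd extension. Once $p_0$ is in place, the iterated completion is a direct application of Section~\ref{s:freezedtwo}.
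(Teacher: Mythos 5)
Your proposal is correct and follows essentially the same route as the paper: (i)–(iii) by the $2\times 2$ principal-submatrix bound and a closed-and-bounded (or continuous-image) compactness argument, and (iv) by extracting an entrywise/pointwise subsequential limit of witnesses $Z^{(W)}$ (possible since the index set is countable and entries are uniformly bounded by (ii)) to obtain a psd function on $\Her\groupWw\times\groupY$ restricting to $\mathcal Z$, followed by an induction along the shortlex successors using Theorem~\ref{l:compHX} (free semigroup) or Theorem~\ref{l:completeZ2} ($\Ztg$) to extend to all of $\Her\groupW\times\groupY$. Your write-up even makes explicit the $\omega$-type enumeration and union step that the paper compresses into ``an induction argument,'' so no substantive difference remains.
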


 \begin{proof}[Proof of item~\ref{i:nested:i}]
 Immediate.
 \end{proof}

 \begin{proof}[Proof of item~\ref{i:nested:ii}]
Let $\fD_W = \{ Z \in \msLK{\Ww}{W}^+ : Z_{e,e} =I_K \} $. %
If $Z \in \fD_W$ and 
$\WYu,\WYv \in \groupWw \times \semiyN{W}$ 
then,
\[
\begin{bmatrix}
I_K & Z_{g,h}^* \\
Z_{g,h} & I_K
\end{bmatrix} %
\]
is a principal submatrix of $Z$ and hence is psd. In particular,
$Z_{g,h}Z_{g,h}^* \preceq I_K.$
\end{proof}

\begin{proof}[Proof of item~\ref{i:nested:iii}]
Continuing with the notations as in the proof of item~\ref{i:nested:ii}, viewing
$Z$  as an element of (complex)
Euclidean space of dimension ${ |\groupWw |}WK,$  it follows from item~\ref{i:nested:ii}
that its Frobenius norm satisfies
 $\|Z\|_2 \le \sqrt{{ |\groupWw |}WK}$.
Thus $\fD_W$ is bounded.

Since $\fD_W$ is the intersection of two closed sets --  $\msLK{\Ww}{W}^+$ and 
$\{Z \in  \msLK{\Ww}{W}  : Z_{e,e} = I_K   \}$ -- {it follows that } $\fD_W$ is itself closed 
and hence compact.  Now suppose $(\cZ_n)$ is a sequence from $\fC_W.$ For each $n$
there exists  ${Z_n} \in \fD_W$ such that $\cZ_n={Z_n}\vert_{M}.$
Since $\fD_W$ is compact, the sequence $({Z_n})$ has a subsequential
limit, say ${Z}.$ It follows that $\cZ = {Z}\vert_{M} \in \fC_W$
is a subsequential limit of $(Z_n).$
Thus $\fC_W$ is sequentially compact and hence compact. 
\end{proof}

\begin{proof}[Proof of item~\ref{i:nested:iv}]
If $\mathcal{Z} \in \bigcap_W \fC_W$, then  for each $W \in \mathbb{N}$, there exists $Z^W \in \msLK{\Ww}{W}^+$ such that $Z^W_{e,e} = I_K$ and $Z^W \vert_{M} = \mathcal{Z}$. 
Making use of this collection $\{ Z^W \}$, there exists a sequence $(U^W)$ described as follows: for each $W$,
let $U^W$ denote the matrix indexed by $\groupWw \times \lly$ with $U^W_{\WYu,\WYv}=Z_{\WYu,\WYv}$ if $\WYu^{-1}\WYv \in \Her \groupW \times \Her \semiyN{W}$
and $0$ otherwise.  Thus, while $U^W$ need not be psd, it is the case that $Z^W= U^W\vert_{W}$ is.
In particular, the norm of each entry $U^W_{\WYu,\WYv}$ of $U^W$ has norm at most one by item~\ref{i:nested:ii}.

Since $\HerIw \times \groupY \times K^2$ is at most countable, by identifying $U^W$ with a  function whose domain is $\HerIw \times \groupY \times K^2$ and whose codomain is $\mathbb{C},$ (and using the fact that the entries  of $U^W$ are uniformly bounded independent of $W$
and $\WYu^{-1}\WYv$)
there is a subsequence of $(U^W)$ that converges entrywise.   Let $U$ 
denote such a pointwise subsequential  limit. A routine argument shows $U_{e,e}=I_K$ and $U$  
arises from a psd form on $\HerIw \times \groupY;$ that
is, there exists a psd function $q:\HerIw \times\groupY \to M_K(\CC)$ such that  
$U_{\WYu,\WYv} = q(\WYu^{-1}\WYv).$ 

 By construction, $U|_{M}=\mathcal{Z};$ that is,
\[
  q(\WYu^{-1}\WYv) 
  =  Z_{\WYu,\WYv}
\]
 for all $\WYu,\WYv\in \groupWw \times \semiyN{M}.$ %
{ 
 An induction argument based on Theorem~\ref{l:compHX} 
 for $\groupW = \Ztg$ (resp.   Theorem~\ref{l:completeZ2} in the case of $\groupW=\llx_\vg$)
 }
 now shows that 
 $q$  extends to  a %
 a psd form  
  $p:\Her \llx_\vg\times\groupY \to M_k(\CC)$ (resp.  
  $p: \Ztg \times \groupY  \xrightarrow{} M_K(\CC)$).
 By construction,  $p(\WYu^{-1} \WYv) = q(\WYu^{-1}\WYv)= \mathcal{Z}_{\WYu,\WYv}$ for all $\WYu,\WYv\in \groupWw\times \semiyN{M}$
 and the proof is complete.
\end{proof}

Recall the definition of a $M_K(\CC)$-valued trigonometric polynomial of bidegree at most
$(\Ww,M)$ and its evaluation at a representation from equations~\eqref{eq:trigPoly}
and \eqref{eq:evalPoly}.

Fix a $W\ge M.$ 
Given  $\lralpha\in \Her \groupWw\times \Her \semiyN{W},$ let
$\mbo_\alpha:\Her \groupWw\times \Her \semiyN{W}\to\CC$ denote
the indicator function of $\lralpha.$ Define 
\begin{equation}
    \label{d:msBalpha}
 \msB_\alpha  = \begin{pmatrix} \mbo_\alpha(\WGu^{-1}\WGv)) \end{pmatrix}_{\WGu,\WGv\in \groupWw\times \semiyN{W}}.
\end{equation}

The collection \df{$\msB$},
\begin{equation}
    \label{d:msB} 
 \msB =\{\msB_\alpha : \lralpha\in \Her \groupWw\times \Her \semiyN{W}\}
\end{equation}
is a basis for $\mathscr{L}_{\Ww,W,1}=\mathscr{S}_{\Ww}\otimes\mathscr{T}_W.$

 Let $\sAwM$\index{$\mathscr A$} denote the set of trigonometric
 polynomials of bidgree at most $(\Ww,M)$ as in equation~\eqref{eq:trigPoly}
 that satisfy the normalization $A_e=I_K,$ where $e$ is the unit in $\Her \groupW\times  \groupY.$
Given $A\in \sAwM,$ and $W\ge M,$  set $A_\alpha=0$ for 
$\lralpha\in \Her \groupWw \times \Her \semiyN{W},$ but  $\lralpha\notin  \Her \groupWw \times \Her \semiyN{M}.$ %
Define $\Phi_A=\Phi_A^W:\mathscr{L}_{\Ww,W,1}\to M_K(\CC)$\index{$\Phi_A$} by
 \begin{equation}
    \label{d:PhisubAW}
      \Phi_A^W(\sB_\alpha) = A_{\alpha}.
\end{equation}
Finally, define $\Phi_A =\Phi_A^\infty: \mathscr{L}_{w,\infty,1}\to M_K(\CC)$
analogously.

\begin{lemma}
\label{t:Kpos}
Suppose $A\in\sAwM$ and  there is an $\epsilon>0$ such that  for each Hilbert space $\HE$
and  unitary representation $\pi:\Her \groupW\times\groupY\to B(\HF),$
\[
 A(\pi)  \succeq \epsilon (I_K \otimes I_{\HF}). 
\]
If $\mathcal{Z} \in \bigcap_W \fC_W$, then $  
(\Phi_A \otimes I_K) (\mathcal{Z}) \succeq \epsilon$. 
\end{lemma}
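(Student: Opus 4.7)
The plan is to combine the completion/limit content of Lemma~\ref{l:nested}(iv) with the GNS-type dilation of Proposition~\ref{p:freeY}, and then convert the positive-definiteness hypothesis on $A(\pi)$ into the desired estimate via a compression identity.

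First I would invoke Lemma~\ref{l:nested}(iv) on $\mathcal{Z}\in\bigcap_W \fC_W$ to obtain a psd function $p:\Her\groupW\times\groupY\to M_K(\CC)$ with
\[
 \mathcal{Z}_{\WYu,\WYv} \;=\; p(\WYv^{-1}\WYu) \qquad \text{for all } \WYu,\WYv\in\groupWw\times\semiyN{M}.
\]
Since $\mathcal{Z}=Z\mid_M$ for some $Z\in\mathscr{L}^+_{\Ww,W,K}$ with $Z_{e,e}=I_K$, we have the normalization $p(e)=\mathcal{Z}_{e,e}=I_K$. Proposition~\ref{p:freeY} then produces a Hilbert space $\HF$, a unitary representation $\pi:\Her\groupW\times\groupY\to\cB(\HF)$, and an isometry $V:\CC^K\to\HF$ (here I rename the $W$ of Proposition~\ref{p:freeY} to avoid clashing with the truncation index) such that
\[
 p(\lralpha) \;=\; V^*\,\pi(\lralpha)\, V \qquad \text{for every } \lralpha\in\Her\groupW\times\groupY.
\]

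Next I would expand $\mathcal{Z}$ in the basis $\msB$ from equation~\eqref{d:msB}. Under the identification $\mathscr{L}_{\Ww,M,K}\cong \mathscr{L}_{\Ww,M,1}\otimes M_K(\CC)$ and reading off $(\sB_\lralpha)_{\WGu,\WGv}=\chi_\lralpha(\WGv^{-1}\WGu)$, the definition of $p$ gives the expansion
\[
 \mathcal{Z} \;=\; \sum_{\lralpha\in \Her \groupWw\times\Her \semiyN{M}} \sB_\lralpha \otimes p(\lralpha).
\]
Applying $\Phi_A\otimes I_K$ and using the defining identity $\Phi_A(\sB_\lralpha)=A_\lralpha$ from \eqref{d:PhisubAW}, followed by $p(\lralpha)=V^*\pi(\lralpha)V$, yields
\[
 (\Phi_A\otimes I_K)(\mathcal{Z}) \;=\; \sum_\lralpha A_\lralpha\otimes p(\lralpha) \;=\; (I_K\otimes V)^*\Bigl(\sum_\lralpha A_\lralpha\otimes \pi(\lralpha)\Bigr)(I_K\otimes V) \;=\; (I_K\otimes V)^* A(\pi)(I_K\otimes V).
\]

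Finally I would invoke the hypothesis $A(\pi)\succeq \epsilon(I_K\otimes I_\HF)$, compress by $I_K\otimes V$, and use that $V$ is isometric so that $V^*V=I_K$. This gives
\[
 (\Phi_A\otimes I_K)(\mathcal{Z}) \;\succeq\; \epsilon\,(I_K\otimes V)^*(I_K\otimes I_\HF)(I_K\otimes V) \;=\; \epsilon\,(I_K\otimes I_K),
\]
which is the claim. The only real obstacle is bookkeeping: keeping the indexing convention for $\sB_\lralpha$, the Toeplitz-type structure $\mathcal{Z}_{\WYu,\WYv}=p(\WYv^{-1}\WYu)$, and the tensor-factor ordering consistent so that the dilation $p=V^*\pi(\cdot)V$ converts cleanly into the compression formula above. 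Everything else is an immediate consequence of Lemma~\ref{l:nested}(iv), Proposition~\ref{p:freeY}, and the uniform-strict-positivity hypothesis on $A(\pi)$.
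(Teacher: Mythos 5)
Your proposal is correct and follows essentially the same route as the paper: Lemma~\ref{l:nested}(iv) to realize $\mathcal{Z}$ via a psd function $p$, Proposition~\ref{p:freeY} to dilate $p$ to $V^*\pi(\cdot)V$ with $V$ isometric, the expansion $(\Phi_A\otimes I_K)(\mathcal{Z})=\sum_\lralpha A_\lralpha\otimes p(\lralpha)=(I\otimes V)^*A(\pi)(I\otimes V)$, and then the hypothesis $A(\pi)\succeq\epsilon$. Your explicit check of the normalization $p(e)=\mathcal{Z}_{e,e}=I_K$ (needed so that $V$ may be taken isometric) is a point the paper leaves implicit, but the argument is the same.
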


\begin{proof}
 For $\WYu,\WYv \in \groupWw\times \semiyN{M}$, 
let 
\[
 \mathcal{Z}_{\alpha} = \mathcal{Z}_{\WYu,\WYv},
\]
where $\alpha = \WYu^{-1}\WYv\in \Her \groupWw\times\Her \semiyN{M}.$

By Lemma~\ref{l:nested} item~\ref{i:nested:iv}, there exists
a psd function $p:\Her \groupW \times \groupY \to M_K(\CC)$ such that
\[
 \mathcal{Z}_{\lralpha} = p(\lralpha), \ \ \ \lralpha \in \Her \groupWw \times \Her \semiyN{M}.
\]
 By Proposition~\ref{p:freeY}, 
 there is a Hilbert space $\HF,$ a unitary representation $\pi: \Her \groupW \times\groupY \to B(\HF)$ 
 and an isometry $V:\CC^K\to \HF$ such 
 that $p(\cdot) = V^* \pi(\cdot) V.$

Now observe, 
\[
\begin{split}
 (\Phi_A\otimes I_K)(\mathcal{Z}) 
 &  = \sum_{\alpha} A_\alpha \otimes \mathcal{Z}_\alpha
  =\sum_{\alpha} A_\alpha \otimes p(\alpha) 
\\ &  =\sum_{\alpha} A_\alpha \otimes V^* \pi(\alpha) V
  = (I\otimes V)^*  A(\pi) (I\otimes V),
\end{split}
\]
where the sums are over $\alpha\in \Her \groupWw\times \Her \semiyN{M}.$
By hypothesis, $A(\pi)\succeq \epsilon.$ Hence
\[
 (\Phi_A\otimes I)(\mathcal{Z}) \succeq \epsilon. \qedhere
\]
\end{proof}

Let $\Pi_{\Her \groupW\times\groupY}$ \index{$\Pi$}
denote the set of unitary representations of $\Her \groupW\times\groupY$ on separable Hilbert space
and set
\begin{equation}
    \label{d:PwM}\index{$\mathcal P$}
 \mathcal{P}_{\Ww,M}^ \epsilon = 
 \{A\in\sAwM :  A(\pi)\succeq \epsilon(I_K \otimes I), \text{ for all } \pi \in \Pi_{\Her \groupW\times\groupY} \}.
\end{equation}

\begin{lemma}
 \label{l:Pcpt}
    The set $\mathcal{P}_{\Ww,M}^ \epsilon$ is compact.
\end{lemma}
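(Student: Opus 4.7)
My plan is to work in the finite-dimensional affine subspace of $M_K(\CC)\otimes\CC[\Her\groupWw\times\Her\semiyN{M}]$ that $\sAwM$ carves out via the coefficient coordinates $\{A_g\}_{g\in\Her\groupWw\times\Her\semiyN{M}}$ subject to the normalization $A_e=I_K$, and to establish that $\mathcal{P}_{\Ww,M}^\epsilon$ is both closed and bounded in the induced Euclidean topology.

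Closedness is immediate: for each $\pi\in\Pi_{\Her\groupW\times\groupY}$, the evaluation map $A\mapsto A(\pi)=\sum_g A_g\otimes\pi(g)$ is linear in the finitely many coordinates, hence continuous, and the set $\{T:T\succeq\epsilon I\}$ is closed in the operator norm. Thus $\mathcal{P}_{\Ww,M}^\epsilon$ is an intersection of closed preimages.

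The content is in the bounded part. My plan is to test against a single judicious representation, namely the left regular representation $\lambda$ of the ambient group $G$, where $G=\freeG_\vg\times\groupY$ when $\groupW=\llx_\vg$ and $G=\Ztg\times\groupY$ otherwise. By the conventions for unitary representations recorded in the Notation subsection, the restriction of $\lambda$ to $\Her\groupW\times\groupY$ lies in $\Pi_{\Her\groupW\times\groupY}$. Consequently $A(\lambda)\succeq\epsilon I\succeq 0$, and for any $h$ in the (finite) support of $A$ and any $v,w\in\CC^K$ one has
\[
\langle A(\lambda)(v\otimes\delta_e),\,w\otimes\delta_h\rangle=\langle A_h v,\,w\rangle,
\]
while $\langle A(\lambda)(u\otimes\delta_g),u\otimes\delta_g\rangle=\langle A_e u,u\rangle=\|u\|^2$ for every $g\in G$; the latter uses $A_e=I_K$ together with $\lambda(g)\delta_e=\delta_g$. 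Writing $A(\lambda)=T^*T$ with $T=A(\lambda)^{1/2}$ and applying ordinary Cauchy--Schwarz to the pair $T(v\otimes\delta_e)$, $T(w\otimes\delta_h)$ gives $|\langle A_h v,w\rangle|\le\|v\|\,\|w\|$, so $\|A_h\|\le 1$ uniformly in $A\in\mathcal{P}_{\Ww,M}^\epsilon$.

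Closed plus bounded in the finite-dimensional ambient space yields compactness. I anticipate no serious obstacle; the only point that deserves a line of justification in the final write-up is that the left regular representation of $G$ genuinely qualifies as an element of $\Pi_{\Her\groupW\times\groupY}$, which is immediate from the paper's convention that a unitary representation of $\Her\llx_\vg\times\groupY$ (respectively $\Ztg\times\groupY$) is by definition a unitary representation of $\freeG_\vg\times\groupY$ (respectively $\Ztg\times\groupY$).
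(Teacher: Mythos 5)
Your proof is correct, but the boundedness step takes a genuinely different route from the paper's. The paper keeps the argument inside its kernel machinery: for each $\lralpha$ it forms the indicator-function matrices $\Upsilon_\lralpha$ over the semigroup, notes $I+\tfrac12(e^{it}\Upsilon_\lralpha+e^{-it}\Upsilon_\lralpha^*)\succeq 0$, feeds the resulting elements of $\bigcap_W\fC_W$ into $\Phi_A$ via Lemma~\ref{l:nested}, and then invokes the one-variable operator Fej\'er--Riesz theorem on $I+\tfrac12(e^{it}A_\lralpha+e^{-it}A_\lralpha^*)\succeq 0$ to get $\|A_\lralpha\|\le 2$. You instead evaluate directly at the left regular representation of $\freeG_\vg\times\groupY$ (resp.\ $\Ztg\times\groupY$), which does qualify as an element of $\Pi_{\Her\groupW\times\groupY}$ on a separable space since the group is countable, and extract each coefficient as a matrix entry $\langle A(\lambda)(v\otimes\delta_e),w\otimes\delta_h\rangle=\langle A_h v,w\rangle$; Cauchy--Schwarz applied to $A(\lambda)^{1/2}$ together with the normalization $A_e=I_K$ then gives the sharper bound $\|A_h\|\le 1$. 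Your argument is shorter, more elementary (no Fej\'er--Riesz input, no appeal to the $\fC_W$ formalism or Lemma~\ref{l:nested}), and gives a better constant; the paper's version has the mild virtue of never singling out a concrete representation, reusing only machinery already developed for the main theorem. One small point to note in a final write-up: the paper's evaluation convention \eqref{e:piUrho} expresses $\pi(\Wu^{-1}\Wv\,\gYu)$ through $(U^{\Wv})^*U^{\Wu}\rho(\gYu)$, so with $U_j=\lambda(x_j)$ the operator $\pi(g)$ may be $\lambda$ of a reshuffled group element; since this reshuffling is a bijection of $\Her\groupWw\times\Her\semiyN{M}$ fixing $e$, your entry-extraction and the resulting uniform bound are unaffected, but the sentence claiming $\langle A(\lambda)(v\otimes\delta_e),w\otimes\delta_h\rangle=\langle A_h v,w\rangle$ deserves that one-line caveat.
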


\begin{proof}
  That $\mathcal{P}_{\Ww,M}^ \epsilon$ is closed is evident.
  Since it lives in a finite dimensional (normed) vector
  space, it  thus suffices to show $\mathcal{P}_{\Ww,M}^ \epsilon$ is bounded. 
 {To do so} we begin with 
  a  preliminary observation. 

  If $T$ is an operator on Hilbert space and
\[
 g(e^{it}) = I + e^{it} T + e^{-it} T^*\succeq 0
\]
  for all real numbers $t,$ then $\|T\|\le 1.$
 There are many proofs of this fact. Here is one. %
By the operator-valued  Fejér--Riesz theorem \cite{Ro68}, there exist Hilbert 
 space operators $V_0,V_1$ such that 
\[
 g(e^{it}) =(V_0 +e^{it}V_1)^* ( V_0 + e^{it}V_1).
\]
Thus $V_0^* V_0 +V_1^*V_1 = I$ and $V_0^*V_1 =T.$
In particular, $V_0,V_1$ have norm at most one and
hence so does $T.$

  Given a group $G$ with generators $\zeta=\{\zeta_1,\dots,\zeta_\vh\}$,
  let $M$ denote the semigroup generated by $\zeta.$ 
 For $b\in \Her M,$ let $\mbo_b:\Her M\to \CC$ denote the indicator
 function of $b$ and let $\Upsilon_b$ denote the matrix $(\mbo_b(u^{-1}v))_{u,v\in M}.$ 
 In particular, with $e\in G$ the identity, $\Upsilon_e =I.$
 Observe that any given row and column of  $\Upsilon_b$ has at most one non-zero entry, since 
 if $u^{-1}v = b = u^{-1}w,$ then $v=w$ and similarly if $u^{-1}v=b= w^{-1}v,$ then $w=u.$
 It follows that $\Upsilon_b$ determines a bounded operator of norm $1.$  Hence, for all 
 real $t,$
\[
    I + \frac12 \left (e^{it} \Upsilon_b + e^{-it}\Upsilon_b^* \right ) \succeq 0.
\]

 Fix an $A \in \mathcal{P}_{\Ww,M}^ \epsilon.$
 Applying the observation above to $\lralpha =\WYu^{-1} \WYv
\in \Her \groupWw\times \Her \semiyN{M} =\Her (\groupWw\times \semiyN{M})$ (where  $\WYu= \Wu \Ya$ and 
$\WYv=\Wv\Yb$ for some $\Wu,\Wv\in \groupWw$ and $\Ya,\Yb\in \semiyN{M}$) and setting, 
for $t$ real,
\[
 {Z}(t) = I  + \frac12 \left (e^{it} \Upsilon_\alpha + e^{-it}\Upsilon_\alpha^* \right ) \succeq 0,
\]
 we have  $\mathcal{Z}(t)=Z(t)|_{{w}, M} \in \bigcap_W \fC_W.$

 Hence by Lemma~\ref{l:nested}, 
\[
 0 \preceq \Phi_A(\mathcal{Z}(t)) =  I + \frac12 \left ( e^{it} A_{u^{-1}v}   + e^{-it} A_{u^{-1}v}^* \right )
\]
 for all real $t.$  It now follows that $\|A_{u^{-1}v}\|\le 2$ and consequently
$\mathcal{P}_{\Ww,M}^ \epsilon$ is bounded as claimed. 
\end{proof}

Let 
\begin{equation}
\label{d:whC}\index{$\widehat{\mathfrak C}$}
   \wfC{W} =\{\mathcal{Z}\in \fC_W: \exists A \in \mathcal{P}_{\Ww,M}^ \epsilon \; \text{such that} \;  (\Phi_A \otimes I_K)(\mathcal{Z})\not \succ
   \frac{\epsilon}{2}\}.
\end{equation}
 Observe, for $\mathcal{Z}\in \wfC{W}$ and $Z\in \mathscr{L}_{\Ww,W,K}$ such that $\mathcal{Z}=Z\vert_M,$ that $\Phi_A(Z)$ depends only upon $\mathcal{Z}_{g,h} = Z_{g,h}$ for $g,h\in \groupWw\times\semiyN{M}.$

\begin{lemma}
    \label{l:nested:tilde}
With notation and assumption as above,
\begin{enumerate}[\rm (i)]
\item
\label{l:nested:tilde:1}
$\wfC{W} \supseteq \wfC{W+1}$ for each $W\ge M;$ 
\item 
\label{l:nested:tilde:2} 
each  $\wfC{W}$ is  closed and hence compact; and
\item %
\label{l:nested:tilde:3}
\[
 \bigcap_{W=M}^\infty \, \wfC{W}=\varnothing.
\]
\end{enumerate}
Hence, by the finite intersection property, there is an $M'$
such that $\wfC{M'}=\emptyset.$  
\end{lemma}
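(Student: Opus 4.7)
The plan is to handle the three parts in order, leveraging the compactness results already established.

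For part \ref{l:nested:tilde:1}, the argument is essentially tautological: Lemma \ref{l:nested}\ref{i:nested:i} gives $\fC_{W+1}\subseteq \fC_W$, and the defining condition of $\wfC{\cdot}$ depends only on the values $\cZ_{g,h}$ for $g,h\in \groupWw\times\semiyN{M}$, which are fixed once $\cZ$ is fixed (regardless of $W$). So any $\cZ\in\wfC{W+1}$ automatically lies in $\wfC{W}$.

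For part \ref{l:nested:tilde:2}, the key inputs are that $\fC_W$ is compact (Lemma \ref{l:nested}\ref{i:nested:iii}) and $\mathcal{P}_{\Ww,M}^\epsilon$ is compact (Lemma \ref{l:Pcpt}). Consider the continuous map
\[
\Psi:\mathcal{P}_{\Ww,M}^\epsilon \times \fC_W \longrightarrow M_{K^2}(\CC),\qquad (A,\cZ)\mapsto (\Phi_A\otimes I_K)(\cZ),
\]
which is continuous because $\Phi_A$ depends linearly on the (finitely many) coefficients of $A$ and evaluates a fixed linear functional on $\cZ$. The set $\{T\succ \epsilon/2\}$ is open in the self-adjoint matrices, so $\Psi^{-1}(\{T\not\succ \epsilon/2\})$ is closed in the product. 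Projecting onto the $\cZ$-coordinate along the compact factor $\mathcal{P}_{\Ww,M}^\epsilon$ preserves closedness, and the image is precisely $\wfC{W}$. Hence $\wfC{W}$ is a closed subset of the compact $\fC_W$, and is therefore itself compact.

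For part \ref{l:nested:tilde:3}, I argue by contradiction. Suppose $\cZ\in\bigcap_W \wfC{W}$. Since $\wfC{W}\subseteq \fC_W$, we have $\cZ\in\bigcap_W \fC_W$, so by Lemma \ref{l:nested}\ref{i:nested:iv} there is a psd function $p$ on $\Her\groupW\times\groupY$ that realizes the entries of $\cZ$. For each $W$ pick $A_W\in\mathcal{P}_{\Ww,M}^\epsilon$ with $(\Phi_{A_W}\otimes I_K)(\cZ)\not\succ \epsilon/2$. By compactness of $\mathcal{P}_{\Ww,M}^\epsilon$, pass to a convergent subsequence with limit $A\in\mathcal{P}_{\Ww,M}^\epsilon$. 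Continuity of $\Phi_{(\cdot)}$ in the coefficients gives
\[
(\Phi_A\otimes I_K)(\cZ)\not\succ \tfrac{\epsilon}{2}.
\]
But Lemma \ref{t:Kpos} applied to $A\in\mathcal{P}_{\Ww,M}^\epsilon$ and $\cZ\in\bigcap_W \fC_W$ yields $(\Phi_A\otimes I_K)(\cZ)\succeq \epsilon$, hence $\succ \epsilon/2$, a contradiction.

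The final clause follows from the finite intersection property: a decreasing nested family of compact sets with empty intersection must have some term empty, so there exists $M'\ge M$ with $\wfC{M'}=\varnothing$. The only delicate step is part \ref{l:nested:tilde:2}, where one must be careful that the projection of a closed set along a compact fiber remains closed; beyond that, the proof is a clean assembly of Lemmas \ref{l:nested}, \ref{l:Pcpt}, and \ref{t:Kpos}.
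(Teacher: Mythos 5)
Your proposal is correct and follows essentially the same route as the paper: monotonicity of $\fC_W$ for part (i), compactness of $\fC_W$ and $\mathcal{P}_{\Ww,M}^\epsilon$ together with joint continuity of $(A,\cZ)\mapsto(\Phi_A\otimes I_K)(\cZ)$ for part (ii), Lemma~\ref{t:Kpos} for part (iii), and the finite intersection property for the final clause. The only differences are cosmetic: you prove closedness in (ii) via closed projection along the compact factor rather than the paper's sequential argument, and in (iii) your extraction of a convergent subsequence of the $A_W$ is unnecessary (a single witness $A$ for any one $W$ already contradicts Lemma~\ref{t:Kpos}), though harmless.
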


\begin{proof}[Proof of \ref{l:nested:tilde:1}]
 Immediate.
\end{proof}

\begin{proof}[Proof of \ref{l:nested:tilde:2}]
For notational ease, let $Q=\mathcal{P}_{\Ww,M}^ \epsilon.$
Let $ ({\cZ}_n) $ be a sequence from $\wfC{W}$ and suppose that $({\cZ}_n)$   converges to ${\cZ}$.  
 Since  by Lemma~\eqref{l:nested} item~\ref{i:nested:iii},
 $\fC_W$ is compact (hence closed), it follows that ${\cZ} \in \fC_W $. For each ${\cZ}_n$, there exists
 $A_n\in Q$ such that $(\Phi_{A_n} \otimes I_K) ({\cZ}_n) \not \succ \frac{\epsilon}{2}.$ 
  Because $Q$ is compact 
  there exists an $A$ and a subsequence  $(A_{n_l})$ of $(A_n)$ that converges to $A.$
  If $(\Phi_{A} \otimes I_K)({\cZ}) \succ \frac{\epsilon}{2},$ then by joint continuity there is an $\ell$ such that
  $ (\Phi_{A_{n_l}} \otimes I_K)( {\cZ}_{n_l})\succ \frac{\epsilon}{2},$ which is a contradiction.
  Hence $(\Phi_A\otimes I_K)({\cZ}) \not \succ \frac{\epsilon}{2}$ so that ${Z}\in\wfC{W}$ showing
  $\wfC{W}$ is closed. 
\end{proof}

\begin{proof} [Proof of \ref{l:nested:tilde:3}]
If ${\cZ} \in  \bigcap_{W=M}^\infty \, \wfC{W}$, then 
${\cZ} \in  \bigcap_{W=M}^\infty \, \fC_W$.  Thus an application of Lemma~\ref{t:Kpos}
{gives $(\Phi_A\otimes I_K)({\cZ}) \succeq \epsilon$  for all $A \in \mathcal{P}_{\Ww,M}^ \epsilon.$}
But then $\cZ\notin \bigcap_{W=M}^\infty \, \wfC{W}.$ 
Therefore, $\bigcap_{W=M}^\infty \wfC{W}=\emptyset$ as claimed.
\end{proof}

\begin{remark}\rm 
\label{r:no-op-coefficients}
  The proof of the Lemma~\ref{l:nested:tilde} is valid with $K$ replaced by any positive
  integer $K^\prime.$ However, the resulting $M^\prime$  depends upon $K^\prime.$
  Thus the proof given does not produce a single $M^\prime$ independent of $K^\prime$.
  It is for this reason that Theorem~\ref{t:main} is stated for matrix-valued polynomials
  (and not for operator-valued polynomials).  We offer two perspectives on the difficulty,
  even for a single $A$ with operator coefficients normalized so that $A_{e,e}=I.$ 
  First, Proposition~\ref{no-eps-no-problem}
  fails\footnote{Certainly its proof does.} in this case.  To see why, choose a sequence $A_n$ of matrix-valued polynomials satisfying
  the normalization with corresponding (optimal) $\epsilon_n>0$ tending to $0$ and let $A=\oplus A_n.$
  Second, the continuity argument of item~\ref{l:nested:tilde:2} of Lemma~\ref{l:nested:tilde}
  is problematic. At best it produces states $\rho_n$ and ${\cZ}_n$  each of which converge
  in a weak sense 
   and together satisfy $\rho_n(\Phi_A({Z}_n))\le \frac{\epsilon}{2},$ 
   but unfortunately lack of joint continuity prevents the conclusion that
   $\rho_n(\Phi_A({Z}_n))$ converges with limit also at most $\frac{\epsilon}{2}.$\looseness=-1

  The case of the free semigroup $\llx_1$ ($\vg=1$),  $\Her \llx_1 = \mathbb{Z},$ 
  and  $\groupY=\mathbb{Z}^\vh$ (and for polynomials with operator-valued coefficeints)
  is the setting of the factorization results in \cite{Dritschel}.
  In this case, additional structure provided by the fact that positivity of $A$ is 
  certified by positivity of a multi-variate version of a unilateral Toeplitz operator
  (structure that is not available in general) 
  along with a clever construction combined with an approximation argument prevails
  to produce a version of Theorem~\ref{t:main} for strictly psd operator-valued trigonometric
  polynomials defined on the $\vh+1$ torus. 
\qed 
\end{remark}

\section{Proofs of the main results}\label{sec:proofs}

Combining the completion results of  Section \ref{s:freezedtwo} with the realization of Section \ref{sec:psd_functions_on_groups_arise_from_unitary_representations} and the uniform truncation from Section \ref{s:nested-sequences}, we 
show $\groupW\times\groupY$ supports Fej\'er--Riesz factorization with optimal $\groupW$-degree under uniform strict positivity
obtaining a generalization of Theorem~\ref{t:main}.  To conclude this section, we then indicate modifications
of the proof of Theorem~\ref{t:main} in the case $\groupY$ is either trivial or finite that establish the ``perfect'' group-algebra Positivstellensatz on $\Ztg$ of Theorem~\ref{t:main:noY} and the claim made in Remark~\ref{r:main:Y}.

\subsection{Proof of Theorem~\ref{t:main}}
A generalization of Theorem~\ref{t:main} along the lines of Remark~\ref{r:t:main:bounds}
is established in this subsection.

Recall the definition of $\mathcal{P}_{\Ww,M}^\epsilon$  from equation~\eqref{d:PwM} and the definition
of $\Phi_A$ from equation~\eqref{d:PhisubAW} and note that $\mathscr{S}_\Ww \otimes \mathscr{T}_{W}$
is a self-adjoint subspace containing the identity of the $C$-star algebra of 
matrices indexed by the finite set $\groupWw\times \semiyN{W}.$ 
Thus each $\mathscr{S}_\Ww \otimes \mathscr{T}_{W}$ an operator
system.

\begin{theorem}
\label{t:main:cp}
Let $\Ww\in \groupW$ and a positive integer $M$ be given.
For each $\epsilon>0$ there is a  positive integer  $W\ge M$
such that 
if $A \in \mathcal{P}_{\Ww,M}^\epsilon,$  then 
 $\Phi_A: \mathscr{S}_\Ww \otimes \mathscr{T}_{W} \xrightarrow{} M_K(\CC) $ is completely positive.
\end{theorem}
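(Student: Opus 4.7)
The plan is to combine the uniform truncation from Lemma~\ref{l:nested:tilde} with a standard scaling argument, then invoke a classical result on $K$-positivity.

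First, observe that $\mathcal{P}_{\Ww,M}^\epsilon$ is compact by Lemma~\ref{l:Pcpt}. Applying Lemma~\ref{l:nested:tilde} to this compact set, the finite intersection property yields a positive integer $W \ge M$ such that $\wfC{W} = \emptyset$. I claim this $W$ suffices. Unpacking definition~\eqref{d:whC}, emptiness says: for every $A \in \mathcal{P}_{\Ww,M}^\epsilon$ and every $\mathcal{Z} \in \fC_W$, $(\Phi_A \otimes I_K)(\mathcal{Z}) \succ \tfrac{\epsilon}{2}\, I$. Via~\eqref{d:fcW}, this is equivalent to $(\Phi_A \otimes I_K)(Z) \succ \tfrac{\epsilon}{2}\, I$ for every $Z \in \msLK{\Ww}{W}^+$ subject to the normalization $Z_{e,e} = I_K$ (the restriction $Z|_M$ is immaterial since $A_\alpha = 0$ off $\Her\groupWw \times \Her\semiyN{M}$).

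Next I would remove the normalization by scaling. An arbitrary $Z \in \msLK{\Ww}{W}^+$ has the form $Z_{u,v} = p(v^{-1}u)$ with $p(e) \succeq 0$. For $\delta > 0$, let $p_\delta = p + \delta\, \chi_e\, I_K$ and $Z_\delta = (p_\delta(v^{-1}u))_{u,v} \in \msLK{\Ww}{W}^+$; set $Y = p_\delta(e)^{-1/2}$ and define $\widetilde{Z}$ by $\widetilde{Z}_{u,v} = Y\, p_\delta(v^{-1}u)\, Y$. Congruence by $I \otimes Y$ preserves positivity and the group-algebra structure, so $\widetilde{Z} \in \msLK{\Ww}{W}^+$ with $\widetilde{Z}_{e,e} = I_K$. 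The normalized case gives $(\Phi_A \otimes I_K)(\widetilde{Z}) \succeq 0$, and since $(\Phi_A \otimes I_K)(\widetilde{Z}) = (I_K \otimes Y)\, (\Phi_A \otimes I_K)(Z_\delta)\, (I_K \otimes Y)$ with $Y$ invertible, $(\Phi_A \otimes I_K)(Z_\delta) \succeq 0$; letting $\delta \downarrow 0$ yields $(\Phi_A \otimes I_K)(Z) \succeq 0$, i.e., $\Phi_A$ is $K$-positive. Finally, I would invoke the classical fact (Choi, and its extension to operator systems) that a $K$-positive linear map into $M_K(\CC)$ is automatically completely positive.

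The main obstacle is the uniformity in $A$: a single $W$ must work for all $A \in \mathcal{P}_{\Ww,M}^\epsilon$ simultaneously. This is exactly the content of Lemma~\ref{l:nested:tilde} (compactness plus the finite intersection property on the nested family $\wfC{W}$), and it produces the bidegree bound $M' = W$ promised in Remark~\ref{r:t:main:bounds}. The scaling step and the upgrade from $K$-positivity to complete positivity are then routine, though the latter requires the operator-system version of Choi's theorem.
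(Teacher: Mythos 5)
Your proposal is correct and takes essentially the same route as the paper's proof: the uniform $W$ comes from $\wfC{W}=\emptyset$ via Lemma~\ref{l:nested:tilde} together with compactness of $\mathcal{P}_{\Ww,M}^{\epsilon}$ (Lemma~\ref{l:Pcpt}), the normalization is removed by a congruence with $\mathcal{X}_{e,e}^{-1/2}$ plus a limiting argument (your $\delta$-regularization is the same device in a slightly different order), and complete positivity follows from $K$-positivity by \cite[Theorem 6.1]{Paulsen}, exactly as in the paper.
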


\begin{proof}
By Lemma~\ref{l:nested:tilde} item~\ref{l:nested:tilde:3}, there is an $W\ge M$ such that $\wfC{W}=\emptyset,$
where $\wfC{W}$ is defined in equation~\eqref{d:whC}.  Thus, given  $A\in \mathcal{P}_{\Ww,M}^\epsilon,$ %
for each $\mathcal{Z}\in \fC_{W},$ we have $\Phi_A(\mathcal{Z}) \succeq \frac{\epsilon}{2}.$

Now  suppose $\mathcal{X} \in \mathscr{L}_{\Ww,W,K} = \mathscr{S}_\Ww \otimes \mathscr{T}_{W}$ and $\mathcal{X} \succ 0$. Because $\mathcal{X} \succ0$, it follows that $\mathcal{X}_{e,e} \succ 0$ as well; hence, $\mathcal{X}_{e,e}$ is invertible. Let $\PXee = I\otimes \mathcal{X}_{e,e}^{-1/2}$. Set 
\[
 \mathcal{X}' =  \PXee \mathcal{X} \PXee. %
\]
Because 
$\mathcal{X}^\prime_{e,e} = I_K$, it follows that  
  $\mathcal{X}^\prime  \vert_M \in {\fC}_{W}$. Thus $\Phi_A(\mathcal{X}^\prime)\succeq \frac{\epsilon}{2}$
  (where we have written $\Phi_A$ in place of $\Phi_A\otimes I_K$ as is customary). 
It now follows that 
\[
\begin{split}
 0 & \preceq (I_K \otimes \PXee^{-1} ) \,  \Phi_A(  \mathcal{X}') \,  (I_K \otimes \PXee^{-1} ) \\ 
   & =  \Phi_A (  \PXee^{-1} \mathcal{X}' \PXee^{-1})  = \Phi_A(\mathcal{X}).
\end{split}
\]

If
$\mathcal{X}\in\mathscr{L}_{\Ww,W,K}^+$  is not strictly positive definite,  {then,}
  by considering $\mathcal{X} + \delta I\otimes I_K$ for $\delta>0,$ a  limiting argument gives $\Phi_A(\mathcal{X}) \succeq 0.$  Thus
 $\Phi_A$ is $K$-positive. 
 Since $\Phi_A$ maps an operator system into $M_K(\CC)$
 is $K$-positive, it is completely positive
 by \cite[Theorem 6.1]{Paulsen}. 
\end{proof}

The following result is a generalization of Theorem~\ref{t:main} from the introduction.

\begin{theorem}
\label{t:main:again}
For each $\epsilon>0$ there exists a positive integer $W\ge M$ such that
if $A\in\mathcal{P}_{\Ww,M}^\epsilon,$ then 
there is  an analytic polynomial $B$
of bidegree at most $(\Ww,W)$ such that 
\[
 A= B^*B.
\]
\end{theorem}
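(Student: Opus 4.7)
The plan is to invoke the uniform complete positivity furnished by Theorem~\ref{t:main:cp} and upgrade it to a Stinespring dilation from which an analytic factor $B$ can be read off. Apply Theorem~\ref{t:main:cp} to obtain a positive integer $W\ge M$ (depending only on $\epsilon$, $\Ww$, $M$, and $K$) so that, for every $A\in\mathcal{P}_{\Ww,M}^\epsilon$, the map $\Phi_A\colon\mathscr{L}_{\Ww,W,1}\to M_K(\CC)$ is completely positive. Since $\mathscr{L}_{\Ww,W,1}$ is an operator system contained in the finite-dimensional $C^*$-algebra $M_n(\CC)$ with $n=|\groupWw\times\semiyN{W}|$, Arveson's extension theorem supplies a completely positive extension $\widetilde{\Phi}_A\colon M_n(\CC)\to M_K(\CC)$ of $\Phi_A$.

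Stinespring's dilation theorem applied to $\widetilde{\Phi}_A$ then yields a Hilbert space $\mathcal{L}$ (of dimension at most $nK$) and bounded operators $V_\WGu\colon\CC^K\to\mathcal{L}$ for $\WGu\in\groupWw\times\semiyN{W}$ such that $\widetilde{\Phi}_A(E_{\WGu,\WGv})=V_\WGu^*V_\WGv$. Expanding $\msB_\alpha$ as a sum of matrix units over the fibre $\{\WGv^{-1}\WGu=\alpha\}$ yields the compression identity
\[
 A_\alpha\;=\;\Phi_A(\msB_\alpha)\;=\;\sum_{\WGv^{-1}\WGu=\alpha} V_\WGu^* V_\WGv,
 \qquad \alpha\in\Her\groupWw\times\Her\semiyN{W}.
\]

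Finally, assemble the analytic polynomial
\[
 B\;=\;\sum_{\WGu\in \groupWw\times\semiyN{W}} V_\WGu\otimes\WGu,
\]
with coefficients in $B(\CC^K,\mathcal{L})$ and of bidegree at most $(\Ww,W)$. Expanding and collecting $B^*B$ by group element, the coefficient of $\alpha$ becomes $\sum_{\WGu^{-1}\WGv=\alpha} V_\WGu^*V_\WGv$; reconciling this with the Stinespring identity above, together with the self-adjointness of both $A$ and $B^*B$, yields $A=B^*B$.

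\textbf{Main obstacle.} The bulk of the work has already been absorbed into Theorem~\ref{t:main:cp}, which in turn rests on Lemma~\ref{l:nested:tilde} and the finite-intersection compactness argument of Section~\ref{s:nested-sequences}. What remains is essentially bookkeeping, but there is one delicate point: the index relation $\alpha=\WGv^{-1}\WGu$ appearing in the Stinespring formula differs by an inversion from the relation $\alpha=\WGu^{-1}\WGv$ appearing in the coefficient of $\alpha$ in the product $B^*B$. Tracking this discrepancy — and verifying that it is absorbed by the identities $A=A^*$ and $(B^*B)^*=B^*B$ — is the subtle, convention-dependent step of the argument; once it is handled, the factorization and the optimal bidegree bound follow immediately.
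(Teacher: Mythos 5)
Your overall route is the one the paper takes: Theorem~\ref{t:main:cp} supplies a uniform $W$, Arveson's extension theorem produces a completely positive extension of $\Phi_A$ to the full matrix algebra indexed by $\groupWw\times\semiyN{W}$, the Choi/Stinespring factorization expresses the extension on matrix units as $V_{\WYu}^*V_{\WYv}$, and these blocks become the coefficients of the analytic polynomial $B$. Up to that point your proposal matches the paper's proof of Theorem~\ref{t:main:again} step for step.

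The gap is precisely at the point you flag and then wave away. With the conventions you state, the extension gives $A_\alpha=\sum_{\WYv^{-1}\WYu=\alpha}V_{\WYu}^*V_{\WYv}$, while the coefficient of $\alpha$ in $B^*B$ is $\sum_{\WYu^{-1}\WYv=\alpha}V_{\WYu}^*V_{\WYv}$. Renaming the summation indices in the first sum shows
\[
A_\alpha=\Bigl(\sum_{\WYu^{-1}\WYv=\alpha}V_{\WYu}^*V_{\WYv}\Bigr)^{\!*}=\bigl((B^*B)_\alpha\bigr)^*=(B^*B)_{\alpha^{-1}},
\]
so what you have actually factored is $\sum_\alpha A_\alpha^*\,\alpha=\sum_\alpha A_{\alpha^{-1}}\,\alpha$, not $A$. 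Self-adjointness of $A$ and of $B^*B$ cannot close this: it only relates $A_\alpha$ to $A_{\alpha^{-1}}$, never $A_\alpha$ to its own adjoint, so for non-Hermitian matrix coefficients (the case $K\ge2$ the theorem is about) the two polynomials differ. Nor can the mismatch be repaired after the fact by re-factoring the same Choi matrix with the blocks in the opposite order, i.e.\ demanding $\Psi(E_{\WYu,\WYv})=V_{\WYv}^*V_{\WYu}$ for all pairs; that amounts to positivity of a partial transpose of the Choi matrix, which fails in general. The correct resolution, and the way the paper organizes equations~\eqref{e:main1a} and~\eqref{e:main2a}, is to eliminate the inversion before factoring: expand $\msB_\alpha$ in matrix units so that the fibre carrying $\Psi$ is the same fibre, with the same order of the two factors, that indexes the coefficient of $\alpha$ in $B^*B$, and take the Gram factorization of the Choi matrix in that orientation. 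Then $A_\alpha=\sum\{B_{\WYv}^*B_{\WYu}:\WYv^{-1}\WYu=\alpha\}$ is literally $(B^*B)_\alpha$ and no appeal to self-adjointness is needed (or available). So the skeleton of your argument is right, but the reconciliation step as you describe it would not prove the theorem; it must be replaced by a consistent choice of conventions at the Choi-matrix stage.
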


\begin{proof}
By Theorem~\ref{t:main:cp}, there exists a $W\ge M$ such that 
$\Phi_A: \mathscr{L}_{\Ww,W,1}\to M_K(\CC)$ is completely positive.
 Let $\mathbb{M}$ denote the space of matrices indexed by $\groupWw\times \semiyN{W}.$ 
 Thus $\mathscr{L}_{\Ww,W,1}$ is naturally identified as a unital self-adjoint subspace
 of $\mathbb M.$  Given ${\WYu},{\WYv}\in \groupWw\times\semiyN{W},$ let $E_{{\WYu},{\WYv}}$ denote the matrix
 with a $1$ in the $({\WYu},{\WYv})$ entry and $0$ elsewhere. 
 Since $\Phi_A$ is completely positive, there exists,
 by the  Arveson extension theorem \cite[Theorem 7.5]{Paulsen},  a completely positive extension $\Psi:\mathbb{M}\to M_K(\CC)$ 
 of $\Phi_A.$  
 Given ${\lralpha} \in \Her \groupWw \times \Her \semiyN{W},$ observe 
\begin{equation}
    \label{e:main1a}
  \sum \{ \Psi({\WYu},\WYv)  : {\WYu},{\WYv}\in \groupWw \times \semiyN{W}, \ \ {\WYv}^{-1} {\WYu} ={\lralpha}\} = \Phi_A(\mathscr{B}_{\lralpha})=A_{\lralpha},
\end{equation}
 where $\mathscr{B}_{\lralpha}$ is defined in equation~\eqref{d:msBalpha}.

 Since $\Psi$ is completely positive, its Choi matrix \cite[Theorem 3.14]{Paulsen}, 
\[
 C_\Psi = \begin{pmatrix} \Psi(E_{{\WYu},{\WYv}}) \end{pmatrix}_{{\WYu},{\WYv}} \succeq 0,
\]
 is psd. 
 Let $N$ denote the size of $C_\Phi.$ Since $C_\Phi$ is psd with
 block $K\times K$ entries, 
 there exist  $B_{\WYu}\in M_{N,K}(\CC)$ 
  such that
\begin{equation}
    \label{e:main2a}
 B_{\WYv}^* B_{\WYu}  =  \Psi(E_{{\WYu},{\WYv}}),
\end{equation}
for ${\WYu},{\WYv}\in \groupWw\times\semiyN{W}.$ 
Combining equations~\eqref{e:main1a} and \eqref{e:main2a} gives,
\[
  A_{\lralpha} = \sum \{ B_{\WYv}^* B_{\WYu}  : {\WYv},{\WYu}\in \groupWw \times \semiyN{W}, \ \  {\WYv}^{-1}{\WYu}={\lralpha}\}.
\]
Thus $A=B^*B,$ where $B$ is the analytic polynomial (as in equation~\eqref{eq:analPoly})
\[
 B = \sum\{ B_\WYu \, \WYu : \WYu \in \groupWw \times \semiyN{W} \}. \qedhere
\]
\end{proof}

\subsection{Proof of Theorem~\ref{t:main:noY}}
 Only the case where $\Her\groupW =\Ztg$ requires a proof,
 since the $\llx_\vg$ case appears in \cite{Mc01}.
 In the case that $\groupY=\{e\},$ the argument in Section~\ref{s:nested-sequences}
 trivializes: for any choice of positive integer $K,$ a partially defined
 psd matrix on  {$\Her \Ztg$}  with entries from $M_K(\CC)$ extends to a psd matrix defined on 
 all of $\Ztg$ and thus is the compression of a unitary representation of  $\Ztg.$ 
 It follows that  $\Phi_A:\mathscr{S}_\Ww \to \cB(\HE)$ as in the statement of Theorem~\ref{t:main:cp}
 is $K$-positive by simply following the proof of that theorem noting the assumption that
 $A(\pi)\succeq 0$ for all unitary representations of $\Ztg$ suffices.
 Hence $\Phi_A$ is completely positive and the rest of the proof is then the same as that 
 of Theorem~\ref{t:main:again}.

 \subsection{Proof of Remark~\ref{r:main:Y}}
  Note that the results of Section~\ref{s:nested-sequences} are not required.
  Rather $\Phi_A:\mathscr{L}_{\Ww,W,1}\to \cB(\HE)$ is, by assumption,
  completely positive, where $W$ is the cardinality of $\groupY.$ 
  Following the proof of Theorem~\ref{t:main:again} establishes
  the claimed result.
  \qed

\section{Examples}
\label{s:examples}

This section collects examples that demonstrate the sharpness of our results. We construct counterexamples on $\ZZ_2*\ZZ_3$ 
(Example~\ref{eg:noZm})
and $\ZZ_3^{*2}$ (Example~\ref{e:noZ32}) showing that the conclusions of Theorems \ref{l:completeZ2}
and  \ref{t:main:noY}, with
their optimal degree bounds, in the sense of Remark~\ref{r:noZm}, can fail.
In both cases, the argument proceeds by exhibiting a partially defined psd matrix with respect to the relevant group that 
does not have a psd completion in the spirit of Theorem~\ref{l:completeZ2}. 
It is well known that a classical scalar-valued psd trigonometric polynomial in two variables does not necessarily
factor with optimal degree bounds. Example~\ref{eg:noT2} gives a proof of this fact as a consequence of the existence
of a partially defined psd two variable Toeplitz matrix (partially defined psd function relative to $\ZZ^2$) 
that does not have a psd extension (to $\ZZ^2$). Compare with Theorems~\ref{l:compHX}.

\begin{example}\rm
\label{eg:noZm}
Let $G$ denote the free group on $x_1,x_2$ modulo the relations $x_1^3=1=x_2^2.$  That is, $G=\ZZ_3 * \ZZ_2$.
Give $G$ the usual shortlex order and let $\Ww=x_2.$ The immediate successor to $\Ww$ is $\Ws=x_1^2.$
A partially defined matrix with respect to
$\Ws=x_1^2$ takes the form
- keeping in mind $x_1^2=x_1^{-1}$ and $x_2=x_2^{-1}$,
\[
 \begin{pmatrix} 1 & x_1 & x_2 & x_1^2 \\
   x_1^2 & 1 & x_1^2 x_2 & x_1 \\
   x_2 & x_2 x_1 & 1 & x_2 x_1^2 \\
   x_1 & x_1^2 & x_1x_2 & 1
 \end{pmatrix}.
\]
 That is, with $I_\Ws=\{\Wu\in G: \Wu\le \Ws\}$ and $J_\Ws=\Her I_\Ws =\{\Wu^{-1}\Wv : \Wu,\Wv\in I_\Ws\}$
 a function $\rho: J_\Ws\to\CC$ corresponds to the matrix,
\[
  \Upsilon_\rho =\begin{pmatrix} \rho(e) & \rho(x_1) & \rho(x_2) & \rho(x_1^2) \\
   \rho(x_1^2) & \rho(e) & \rho(x_1^2 x_2) & \rho(x_1) \\
   \rho(x_2) & \rho(x_2 x_1) & \rho(e) & \rho(x_2  x_1^2) \\
   \rho(x_1) & \rho(x_1^2) & \rho(x_1x_2) & \rho(e)
 \end{pmatrix}.
\]

  Let 
\[
 \mathscr{S} =
 \bigg \{ \begin{pmatrix} \sigma(\Wu^{-1}\Wv) \end{pmatrix}: \sigma: J_w\to \CC\bigg \} 
 = \bigg \{\begin{pmatrix} a_{1,1} & a_{1,2} & a_{1,3}\\ a_{2,1} & a_{1,1} & a_{2,3}
 \\ a_{3,1} & a_{3,2} & a_{1,1} \end{pmatrix} : a_{j,k}\in \CC\bigg \}  \subseteq M_3(\CC),
\]
 and note $\mathscr{S}$ is a (unital) operator system. 
 Further, given $\Wu\in J_\Ww,$
 letting $\mbo_\Wu:J_\Ww\to \CC$ denote the indicator function of $\Wu,$ the matrices
\[
  \Upsilon_{\Wu} = \Upsilon_{\mbo_\Wu}
\]
 form a basis for $\mathscr{S}.$
 
 Consider $\tau:J_{\Ww}\to \CC$ given by 
  $\tau(e)=1;$ $\tau(x_1)=\tau(x_1^2)=\tau(x_1^2x_2)=\tau(x_2x_1)=-
\frac23;$ and $\tau(x_2)=1$  so that 
\[
 \Upsilon_\tau = \begin{pmatrix}  1 & -\frac{2}{3} & 1 \\[1mm]
 -\frac{2}{3} & 1 & -\frac{2}{3} \\[1mm]
 1 & -\frac{2}{3} & 1\end{pmatrix} \in \mathscr{S}, 
\]
 which is evidently psd. 
Now suppose $\rho:J_{\Ws}\to \CC$ extends $\tau.$ 
Thus
\[
  \Upsilon_\rho=\begin{pmatrix} 1 & -\frac{2}{3} & 1 & -\frac{2}{3} \\[1mm]
 -\frac{2}{3} & 1 & -\frac{2}{3} & -\frac{2}{3} \\[1mm]
 1 & -\frac{2}{3} & 1 & z \\[1mm]
 -\frac{2}{3} & -\frac{2}{3} & \bar z & 1
 \end{pmatrix} 
\]
for some choice of $z\in \CC.$  Since the determinant of 
the submatrix of  $\Upsilon_\rho$ spanned by the first, second, fourth rows and columns
 is negative, it is not possible to extend
$\tau$ to a psd function on $J_\Ws;$ 
i.e., the conclusion of Theorem \ref{l:completeZ2} fails for $\ZZ_3*\ZZ_2$.

We next show that the conclusion of Theorem \ref{t:main:noY} also fails for $\ZZ_3*\ZZ_2$.
 It is a special case of a well known result 
 (see \cite[Theorem~4.8]{Paulsen} and Proposition~\ref{p:freeY})
 that if $p:\ZZ_3* \ZZ_2\to \CC$ is a psd function, then  there is a Hilbert space $\HE$, a unitary representation 
 $\pi:\ZZ_3 * \ZZ_2\to B(\HE)$ and a vector $\he\in \HE$ such that
\[
 p(g) = \langle \pi(g) \he, \he \rangle, 
\]
 for all $g\in \ZZ_3* \ZZ_2.$  
 Thus the set of $\mathscr{P}^+$ of psd partially defined matrices with respect to $\Ww$ 
 that extend to a psd matrix on all of $\ZZ_3\times \ZZ_2$ is in one-one correspondence with the
 partially defined matrices with respect to $\Ww$ that arise from unitary representations
 of $\ZZ_3 * \ZZ_2$ as above. A routine argument (see Section~\ref{s:nested-sequences})
 shows $\mathscr{S}^+$ is closed. What is shown above is that $\Upsilon_\tau$ is in  $\mathscr{S}^+$ (is psd)
 but is not in the closed convex set $\mathscr{P}^+.$ Hence, by Hahn-Banach separation, 
 there is a linear functional $\lambda:\mathscr{S}\to\CC$ such that $\lambda(\Upsilon_\tau)<0$
 and $\lambda(\mathscr{P}^+) \geq 0.$  Setting
\[
 f_\Wu = \lambda(\Upsilon_\Wu),
\]
 for $\Wu\in J_\Ww,$ 
 it follows that the  {trigonometric polynomial} (with scalar coefficients),
\[
  f(x_1,x_2) = \sum_{\Wu \in J_\Ww} f_\Wu \Wu \in \CC[\ZZ_3*\ZZ_2]
\]
 satisfies  $f(U_1,U_2)\succeq 0$ for all pairs of unitary operators $(U_1,U_2)$ satisfying
 $U_1^3=I=U_2^2,$ but
\[
 0> \lambda(\Upsilon_\tau) = \sum_{\Wu\in J_\Ww} \tau(\Wu) f_\Wu.
\]
 On the other hand, if there is a (with possibly vector coefficients)
 $q=\sum_{\Wv\in I_\Ww} q_\Wv \Wv $ such that $f=q^*q,$ then
\[
\lambda(\Upsilon_\tau) =  \sum_{\Wu\in J_\Ww} \tau(\Wu) f_\Wu = \sum_\Wu \left [\sum_{\substack{a,b\in I_\Ww\\ b^{-1}a=\Wu}} q_b^* q_a \right] \, f_\Wu
 = \operatorname{trace}(Q\Upsilon_\tau),
\]
where 
\[
 Q  = \begin{pmatrix} q_b^* q_a \end{pmatrix}_{a,b\in I_\Ww}.
\]
 Since both $Q$ and $\Upsilon_\tau$ are psd, we obtain the contradiction $\lambda(\Upsilon_\tau) \ge 0.$
  \qed
\end{example}

The following  variant of Example \ref{eg:noZm}
shows that 
the conclusions of 
Theorems \ref{l:completeZ2}
and  \ref{t:main:noY}
fail also for $G=\ZZ_3^{*2}$.

\begin{example}\rm
\label{e:noZ32}
Let $G$ denote the free group on $x_1,x_2$ modulo the two relations $x_1^3=1=x_2^3.$ 
Give $G$ the usual lexicographic order. Let $\Ww=x_2$ and $\Ws=x_1^2$ its
immediate successor. A partially defined matrix with respect to
$\Ws=x_1^2$ takes the form
- keeping in mind $x_j^2=x_j^{-1},$
\[
 \begin{pmatrix} 1 & x_1 & x_2 & x_1^2 \\
   x_1^2 & 1 & x_1^2 x_2 & x_1 \\
   x_2^2 & x_2^2 x_1 & 1 & x_2^2 x_1^2 \\
   x_1 & x_1^2 & x_1x_2 & 1
 \end{pmatrix}.
\]
 That is, with $I_w=\{\Wu\in G: \Wu\le \Ww\}$ and $J_\Ww =\{\Wu^{-1} \Wv : \Wu, \Wv\in I_\Ww\}$
 a function $\rho: J_\Ww\to\CC$ corresponds to the matrix,
\[
  \Upsilon_\rho =\begin{pmatrix} \rho(e) & \rho(x_1) & \rho(x_2) & \rho(x_1^2) \\
   \rho(x_1^2) & \rho(e) & \rho(x_1^2 x_2) & \rho(x_1) \\
   \rho(x_2^2) & \rho(x_2^2 x_1) & \rho(e) & \rho(x_2^2  x_1^2) \\
   \rho(x_1) & \rho(x_1^2) & \rho(x_1x_2) & \rho(e).
 \end{pmatrix}
\]

  Now let $s=-\sqrt{\frac12}$ and  choose $\tau:J_\Ww\to \CC$  by $\rho(e)=1;$ $\rho(x_1)=s=\rho(x_2);$ 
  and $\rho(x_1^2 x_2)=0=\rho(x_2^2 x_1)$   so that
\[
 \Upsilon_\tau = \begin{pmatrix} 1 & s & s \\ s & 1 & 0 \\ s &0&1\end{pmatrix},
\]
 which is evidently psd. If there exists a $\rho:J_\Ws\to \CC$ extending $\tau$ such that
 $\Upsilon_\rho$ is psd, then $3\times 3$  submatrix of $\Upsilon_\rho$ based on its  first, second
 and forth rows and columns, 
 which does not depend on the values of $\rho(x_2^2x_1^2)$ and $\rho(x_1x_2),$ 
\[
   \begin{pmatrix} 1&s &s\\ s &1 & s\\ s& s&1\end{pmatrix}
\]
 must be psd.   Since it is not, no such $\rho$ exists. 

 Finally, as in Example \ref{eg:noZm} we conclude that that there
 is a {trigonometric polynomial} (with scalar coefficients),
\[
  p(x_1,x_2) = \sum_{\Wu \in J_w} p_\Wu \Wu
\]
 such that $p(X_1,X_2)\succeq 0$ for all pairs of operators $(X_1,X_2)$ satisfying
 $X_j^3=I,$ but there does not exist a polynomial (with possibly matrix coefficients),
 $q=\sum_{v\in I_\Ww} q_\Wv \Wv$ such that $p=q^*q.$ 
  \qed
\end{example}

\begin{example}\rm
\label{eg:noT2}
The pattern for a two variable Toeplitz matrix is determined by $\alpha =e^{is}$ and $\beta=e^{it}.$
We write $\alpha^*$ in place of $e^{-is}$ etc.
For instance, 
\[
 T=\begin{pmatrix} 1 & \alpha & \beta & \alpha^2 & \alpha \beta & \beta^2 & \alpha^2 \beta & \alpha \beta ^2 \\
    *& 1 &  \alpha^* \beta & \alpha & \beta & \alpha^*\beta^2 & \alpha \beta & \beta^2 \\
    *&*&1 & \alpha^2 \beta^* & \alpha & \beta & \alpha^2 & \alpha \beta \\
    *&*&*&1 & \alpha^*\beta & \alpha^{*2}\beta^2 & \beta & \alpha^*\beta^2 \\
    *&*&*&*&1& \alpha^*\beta & \alpha & \beta \\
    *&*&*&*&*&1& \alpha^{2}\beta^* & \alpha\\
    *&*&*&*&*&*&1& \alpha^* \beta \\
    *&*&*&*&*&*&*&1 \end{pmatrix}
\]
is such a matrix, with 
 the columns  with first entries $\alpha^3$ and $\beta^3$ omitted. 
Making the choices in the upper $7\times 7$ block with $\alpha\beta=\beta^2=\frac{1}{\sqrt{2}}$
and all other entries $0$  obtains the positive two variable Toeplitz matrix, 
\[
T_0 =\begin{pmatrix} 1&0&0&0&s&s\\ *&1&0&0&0&0\\ *&*&1&0&0&0\\ *&*&*&1&0&0\\ *&*&*&*&1&0\\ *&*&*&*&*&1 \end{pmatrix}.
\]
where $s=\frac{1}{\sqrt{2}}.$
A partial extension of $T_0$  to an $8\times 8$ matrix $T$ has the form 
\[
 T= \begin{pmatrix} 1&0&0&0&s&s&w&z \\ *&1&0&0&0&0&s&s\\ *&*&1&0&0&0&0&s\\ 
    *&*&*&1&0&0&0&0\\ *&*&*&*&1&0&0&0\\ *&*&*&*&*&1&0&0 \\
    *&*&*&*&*&*&1&0\\ *&*&*&*&*&*&*&1 \end{pmatrix}
\]
for $w,z\in\CC.$  Now the lower $7\times 7$ principal minor of $T$ 
is fully specified, but it is not positive semidefinite. Thus, it is not possible to
complete the matrix $T_0$ to a positive semidefinite infinite two variable Toeplitz.
Arguing as in Example~\ref{eg:noZm}, it follows that there is a scalar-valued 
two variable trigonometric polynomial 
\[
 p = \sum_{|j|+|k|\le 2} p_{j,k} e^{i js} e^{i kt}
\]
 that does not factor as $p=q^*q$ for $q$ of the form
\[
  q=\sum\{ q_{j,k}e^{i js} e^{i kt} : 0\le j,k, \ \ j+k \le 2\},
\]
for any choice of compatible vectors $q_{j,k}$.\qed
\end{example}

\makeatletter
\saved@setaddresses                    %
\makeatother

\newpage

\printindex

\newpage

\addcontentsline{toc}{section}{Contents}
\tableofcontents

\end{document}